\documentclass[11pt,reqno]{amsart}
\usepackage{amsfonts, amssymb, amsmath, enumerate}
\usepackage{fullpage}
\usepackage{color}
\usepackage[linktoc=page, colorlinks, linkcolor=blue, citecolor=blue]{hyperref}

\numberwithin{equation}{section}


\DeclareMathOperator{\E}{\mathbb{E}}

\DeclareMathOperator*{\Span}{span}
\DeclareMathOperator*{\im}{Im}

\DeclareMathOperator{\dist}{dist}

\DeclareMathOperator{\supp}{supp}
\DeclareMathOperator{\sm}{sm}
\DeclareMathOperator{\Loc}{Loc}

\renewcommand{\Pr}[2][]{\mathbb{P}_{#1} \left\{ #2 \rule{0mm}{3mm}\right\}}
\newcommand{\ip}[2]{\left\langle#1,#2\right\rangle}
\newcommand{\real}[1]{\widetilde{#1}}
\newcommand{\vectwo}[2]{\binom{#1}{#2}}
\newcommand{\mattwo}[2]{\begin{bmatrix} \phantom{x} #1 \phantom{x} \\ \phantom{x} #2 \phantom{x} \end{bmatrix}}
\newcommand{\smallmattwo}[2]{\bigl[ \begin{smallmatrix} \phantom{x} #1 \phantom{x} \\ \phantom{x} #2 \phantom{x} \end{smallmatrix} \bigr]}

\def \C {\mathbb{C}}
\def \N {\mathbb{N}}
\def \P {\mathbb{P}}
\def \R {\mathbb{R}}

\def \Z {\mathbb{Z}}

\def \BB {\mathcal{B}}
\def \CC {\mathcal{C}}
\def \DD {\mathcal{D}}
\def \EE {\mathcal{E}}

\def \LL {\mathcal{L}}
\def \MM {\mathcal{M}}
\def \NN {\mathcal{N}}

\def \a {\alpha}

\def \g {\gamma}
\def \e {\varepsilon}
\def \d {\delta}
\def \l {\lambda}
\def \s {\sigma}

\def \tran {\mathsf{T}}

\def \< {\langle}
\def \> {\rangle}
\def \one {{\bf 1}}

\def \smin {s_{\min}}
\def \Comp {{\mathrm{Comp}}}
\def \Incomp {{\mathrm{Incomp}}}


\newtheorem{theorem}{Theorem}[section]
\newtheorem{proposition}[theorem]{Proposition}
\newtheorem{corollary}[theorem]{Corollary}
\newtheorem{lemma}[theorem]{Lemma}

\newtheorem{definition}[theorem]{Definition}

\theoremstyle{remark}
\newtheorem{remark}[theorem]{Remark}

\newtheorem{assumption}[theorem]{Assumption}


\title[]{No-gaps delocalization for general random matrices}

\author{Mark Rudelson \and Roman Vershynin}

\date{\today}

\address{Department of Mathematics, University of Michigan, 530 Church St., Ann Arbor, MI 48109, U.S.A.}
\email{\{rudelson, romanv\}@umich.edu}
\thanks{Partially supported by NSF grants DMS 1161372,  1265782, 1464514, and USAF Grant FA9550-14-1-0009.}

\begin{document}

\maketitle

\begin{abstract}
 We prove that with high probability, every eigenvector of a random matrix is delocalized in the sense that any subset of its coordinates carries a non-negligible portion of its $\ell_2$ norm. Our results pertain to a wide class of random matrices, including matrices with independent entries, symmetric and skew-symmetric matrices, as well as some other naturally arising ensembles. The matrices can be real and complex; in the latter case we assume
that the real and imaginary parts of the entries are independent.
\end{abstract}

\setcounter{tocdepth}{1}
\tableofcontents

\section{Introduction}

While eigenvalues of random matrices have been extensively studied since 1950-s
(see \cite{AGZ, BS, Tao} for introduction), less is known about
eigenvectors of random matrices. For matrices whose distributions are invariant under unitary or orthogonal
transformations, the picture is trivial: their normalized eigenvectors are uniformly distributed over
the unit Euclidean sphere.
Examples of such random matrices include the classical Gaussian Unitary Ensemble (GUE),
Gaussian Orthogonal Ensemble (GOE) and Ginibre ensembles.
All entries of these matrices are normal, and either all of them are independent (in Ginibre ensemble)
or independence holds modulo symmetry (in GUE and GOE).

Guided by the ubiquitous universality phenomenon in random matrix theory (see \cite{Tao-Vu BullAMS, Tao-Vu survey, EY BullAMS, Erdos}), we can
anticipate that the eigenvectors behave in a similar way for a much broader class of random matrices.
Thus, for a general $n \times n$ random matrix $A$ with independent entries we may expect that the normalized eigenvectors are approximately uniformly distributed on the unit sphere. The same should
hold for a general {\em Wigner matrix} $A$, a symmetric random matrix with independent entries
on and above the diagonal.

The uniform distribution on the unit sphere has several remarkable properties. Showing that
the eigenvectors of general random matrices have these properties, too, became a focus
of attention in the recent years \cite{ESY 1, ESY 2, EK 1, EK 2, BG, EKYY I, EKYY II, EKYY band, TVW, VW, BP, CMS, RV delocalization}.
One of such properties is {\em delocalization in the sup-norm}.
For a random vector $v$ uniformly distributed on the unit sphere, a quick check reveals that
no coefficients can be too large; in particular $\|v\|_\infty = O(\sqrt{\log n}/{\sqrt{n}})$ holds
with high probability. Establishing a similar delocalization property for eigenvectors of random matrices
is a challenging task. For eigenvectors of Hermitian random matrices, a weaker bound
$\|v\|_\infty = O(\log^\gamma n/{\sqrt{n}})$, with $\gamma = O(1)$,
was shown by Erd\"os et. al. \cite{ESY 1, ESY 2} using spectral methods.
Later, Vu and Wang \cite{VW} obtained the optimal exponent $\g=1/2$ for most eigenvectors (those
corresponding to the bulk of the spectrum).
Recently, the authors of the current paper established delocalization for random matrices with
all independent entries by developing a completely different, geometric approach \cite{RV delocalization}.

\subsection{No-gaps delocalization}

In the present paper, we will address a different natural delocalization property.
Examining a random vector uniformly distributed on the sphere, we may notice
that its mass (the $\ell_2$ norm) is more or less evenly spread over the coordinates.
There are no ``gaps'' in the sense that all all subsets $J \subset [n]$ carry a non-negligible
portion of the mass.

The goal of this paper is to establish this property for eigenvectors of random matrices.
Formally, we would like to show that with high probability, for  any eigenvector $v$,
any $\e \in (0,1)$, and any subset of coordinates $J \subset [n]$ of size at least $\e n$, one has
\[
  \Big( \sum_{j \in J} |v_j|^2 \Big)^{1/2}
  \ge \phi(\e) \|v\|_2,
\]
where $\phi: (0,1) \to (0,1)$ is some nice function.
We call this phenomenon {\em no-gaps delocalization}.

One may wonder about the relation of the no-gaps delocalization to the delocalization in the sup-norm
we mentioned before. As is easy to see, neither of these two properties implies the other.
They offer complementary insights into the behavior of the coefficients of the eigenvectors --
one property rules out peaks and the other rules out gaps.

The need for no-gaps delocalization arises naturally in problems of spectral graph theory. A similar notion appeared in the pioneering work of Dekel et. al. \cite{DLL}. The desirability of establishing no-gaps delocalization was emphasized in a paper of Arora and Bhaskara \cite{AB}, where a similar but weaker property was proved for a fixed subset $J$. Very recently, Eldan et. al. \cite{ERS} established a weaker form of no-gaps delocalization for the Laplacian of an Erd\"os-R\'enyi graph with $\e>1/2$ and the function $\phi$ depending on $\e$ and $n$. This delocalization has been used to prove a version of a conjecture of Chung  on the influence of adding or deleting edges on the spectral gap of an Erd\"os-R\'enyi graph. For shifted Wigner matrices and one-element sets $J$, the no-gaps delocalization was proved by Nguyen et. al. \cite{NTV} with $\phi(1/n)=(1/n)^{C}$ for some absolute constant $C$.

\medskip

In the present paper, we prove the no-gaps delocalization for a wide set of ensembles of random matrices including matrices with independent entries, symmetric and skew-symmetric random matrices, and others. Explicitly, we make the following assumption about possible dependencies among the entires.

\begin{assumption}[Dependences of entries]         \label{A}
  Let $A$ be an $N \times n$ random matrix.
  Assume that for any $i,j \in [n]$, the entry $A_{ij}$ is independent of the rest of the entries except possibly $A_{ji}$.
We also assume that the real part of $A$ is random and the imaginary part is fixed.
\end{assumption}

Note that Assumtion~\ref{A} implies the following important independence property, which we will repeatedly
use later: for any $J \subset [N]$, the entries of the submatrix $A_{J \times J^c}$ are independent.

Fixing the imaginary part in Assumtion~\ref{A} allows us to handle real random matrices.
This assumption can also be arranged for complex matrices with independent real and imaginary parts,
once we condition on the imaginary part. One can even consider a more general situation
where the real parts of the entries conditioned on the imaginary parts have variances bounded below.

We will also assume $\|A\| = O(\sqrt{n})$ with high probability.
This natural condition holds, in particular, if the entries of $A$ have mean zero and
bounded fourth moments \cite{Latala}.
To make this rigorous, we fix a number $M \ge 1$
and introduce the boundedness event
\begin{equation}         \label{eq: BAM}
\BB_{A,M} := \left\{ \|A\| \le M \sqrt{n} \right\}.
\end{equation}

\subsection{Main results}

Let us start with the simpler case where matrix entries have continuous distributions.
This will allow us to present the method in the most transparent way,
without having to navigate numerous obstacles that arise for discrete distributions.

\begin{assumption}[Continuous distributions]	\label{A: continuous distribution}
  We assume that the real parts of the matrix entries have
  densities bounded by some number $K \ge 1$.
\end{assumption}

Under Assumptions \ref{A} and \ref{A: continuous distribution}, we show that every subset of at least
eight coordinates carries a non-negligible part of the mass of any eigenvector.
This is summarized in the following theorem.

\begin{theorem}[Delocalization: continuous distributions]	\label{thm: delocalization continuous}
  Let $A$ be an $n \times n$ random matrix which satisfies Assumptions \ref{A} and \ref{A: continuous distribution}.
  Choose $M \ge 1$ such that the boundedness event $\BB_{A,M}$ holds with probability at least $1/2$.
  Let $\e \in (8/n,1/2)$ and $s>0$.
  Then, conditionally on $\BB_{A,M}$, the following holds with probability
  at least $1-(cs)^{ \e n}$.
  Every eigenvector $v$ of $A$ satisfies
  $$
  \|v_I\|_2 \ge (\e s)^6 \|v\|_2 \quad \text{for all } I \subset [n], \; |I| \ge \e n.
  $$
  Here $c = c(K,M)>0$.
\end{theorem}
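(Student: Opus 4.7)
My plan is to reduce, via monotonicity of $I \mapsto \|v_I\|_2$, to a fixed subset $I \subset [n]$ of size $m := \lceil \varepsilon n \rceil$, and show that conditionally on $\BB_{A,M}$ the bad event
\[
\EE_I := \bigl\{ \exists\ \text{unit eigenvector}\ v\ \text{of}\ A:\ \|v_I\|_2 < \eta \bigr\}, \qquad \eta := (\varepsilon s)^6,
\]
has probability at most $(c_1 s\varepsilon/e)^{\varepsilon n}$. A union bound over the $\binom{n}{m} \le (e/\varepsilon)^{\varepsilon n}$ choices of $I$ then yields the claimed $(cs)^{\varepsilon n}$ bound. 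Writing $J := I^c$, I split the equation $A v = \lambda v$ into two block-row equations,
\[
A_{IJ}\,v_J = (\lambda I_m - A_{II})\,v_I, \qquad (A_{JJ} - \lambda I_{n-m})\,v_J = -A_{JI}\,v_I.
\]
On $\EE_I \cap \BB_{A,M}$ one has $\|v_J\|_2 \ge 1/2$, so the unit vector $w := v_J/\|v_J\|_2 \in \R^{n-m}$ satisfies $\|A_{IJ}\,w\|_2 \le 4M\sqrt n\,\eta$.

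The key structural features, both consequences of Assumption~\ref{A} combined with $I \cap J = \emptyset$, are: (i) the entries of $A_{IJ}$ are mutually independent; and (ii) $A_{IJ}$ is independent of $A_{JJ}$, its only dependence on the rest of $A$ coming through the transpose pairs $\{(A_{ij}, A_{ji}) : i\in I,\, j\in J\}$ that couple it to $A_{JI}$. I would exploit these by conditioning on $A_{JJ}$ \emph{and} $A_{JI}$. Under this conditioning, the second block equation determines $v_J$ (and hence the direction $w$) as the deterministic function $v_J = -(A_{JJ} - \lambda I)^{-1}A_{JI}\,v_I$ of the low-dimensional data $(v_I, \lambda) \in \R^m \times \R$, provided $A_{JJ} - \lambda I$ is invertible. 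Moreover, Assumption~\ref{A: continuous distribution} guarantees that the conditional distribution of the free randomness in $A_{IJ}$ still has densities bounded by a constant multiple of $K$, so the tensorized small-ball (Rogozin-type) inequality yields
\[
\P\bigl\{\|A_{IJ}\,u\|_2 \le t\sqrt m \bigr\} \le (C(K)\,t)^m \quad\text{for any fixed unit } u \in \R^{n-m}.
\]
Taking $t \asymp \eta/\sqrt\varepsilon$ and a polynomial-in-$n$ net covering $(v_I, \lambda)$ in the ball of radius $\eta$ in $\R^m$ times $[-M\sqrt n, M\sqrt n]$, the failure probability for fixed $I$ is at most $n^{O(1)}\,(C'(K,M)\,\eta/\sqrt\varepsilon)^m$, which the margin $\eta = (\varepsilon s)^6$ absorbs---together with the union bound over $I$---into $(cs)^{\varepsilon n}$.

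The main obstacle in this plan is producing a quantitative invertibility bound for $A_{JJ} - \lambda I$ uniformly along the $\lambda$-net, outside an event of probability at most $(cs)^{\varepsilon n}$. Such a bound is needed both to define the map $(v_I, \lambda) \mapsto w$ and to control the discretization error along the net. The required smallest-singular-value estimates can be established in the spirit of \cite{RV delocalization}, applied to the $(n-m) \times (n-m)$ matrix $A_{JJ}$, which itself obeys Assumptions~\ref{A} and \ref{A: continuous distribution}. Once this is in hand, the rest of the argument is a routine combination of small-ball, nets, and union bounds as outlined above.
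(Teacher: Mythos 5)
Your reduction of the delocalization event to a union bound over fixed index sets $I$, followed by a net over $(v_I,\lambda)$, is in the same spirit as the paper's Proposition~\ref{prop: reduction}. However, the way you process the eigenvalue equation is genuinely different from the paper's, and the difference is fatal. The paper reads $0 = (A-\lambda)v = (A-\lambda)_I v_I + (A-\lambda)_{I^c}v_{I^c}$ and concludes that the \emph{tall rectangular} matrix $(A-\lambda)_{I^c} \in \C^{n\times (n-\varepsilon n)}$ has a small singular value. It then proves quantitative invertibility of this rectangular matrix (Theorem~\ref{thm: invertibility continuous}) with failure probability $(\text{small})^{\varepsilon n}$; this is achievable precisely because a tall matrix with aspect ratio $1:(1-\varepsilon)$ has $\smin = \Omega(\sqrt{\varepsilon n})$ with an exponentially small tail. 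Your plan instead solves the $J$-block equation for $v_J = -(A_{JJ}-\lambda)^{-1}A_{JI}v_I$, which forces you to control the resolvent of the \emph{nearly square} matrix $A_{JJ}-\lambda$. That is a qualitatively harder object, and the step you flag as the ``main obstacle'' is in fact an impasse, not a routine technicality.

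Here is why. For a square random matrix with bounded-density entries, the available estimate is of the form $\P\{\smin(A_{JJ}-\lambda_0)\le r\} \lesssim r\sqrt n + e^{-cn}$, which is only \emph{polynomial} in $r$. To push the failure probability down to $(cs)^{\varepsilon n}$ for a single $\lambda_0$ you must take $r$ exponentially small in $\varepsilon n$. But then the approximation of $\lambda$ by the $\lambda$-net point $\lambda_0$ breaks: $\|(A_{JJ}-\lambda)^{-1}\|$ is of order $1/r$, so the map $(v_I,\lambda)\mapsto w = v_J/\|v_J\|_2$ is Lipschitz with constant $\sim 1/r^2$, forcing the $\lambda$-net mesh to be $\lesssim r^2$ and the net to have $\gtrsim n/r^4$ points. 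The union bound over the net then produces a factor that diverges as $r\to 0$, and there is no choice of $r$ for which the product of (per-point failure probability) $\times$ (net size) is $\le (cs)^{\varepsilon n}$. The citation to \cite{RV delocalization} cannot rescue this: the $\ell_\infty$-delocalization results there operate at polynomial, not exponential, failure probability, which is exactly the regime your plan cannot afford. There is also a structural reason the resolvent cannot be made uniformly tame: $\lambda$ is an eigenvalue of $A$, which by perturbation theory sits close to eigenvalues of the principal submatrix $A_{JJ}$, so $(A_{JJ}-\lambda)^{-1}$ is large for the very $\lambda$'s you need. Your ``polynomial-in-$n$ net over $(v_I,\lambda)$'' is also not right --- a net for the $\lambda$-direction alone is already forced to be exponentially large by the above --- but this is a symptom of the same underlying problem.

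In contrast, the paper never inverts $A_{JJ}-\lambda$: after passing to the tall matrix $\bar A=(A-\lambda)_{I^c}$, it splits the \emph{rows} into a nearly square block $B$ and a thin block $G$ (equation~\eqref{eq: A decomposed}), controls $B$ on a large subspace through the negative second moment identity and distance estimates (Sections~\ref{s: dist into smi}--\ref{s: E+}), and controls $G$ on the small complementary subspace by a net argument (Section~\ref{s: G below on E-}). Each of these pieces yields failure probability $(\text{small})^{\varepsilon n}$ without ever requiring a uniform lower bound on the smallest singular value of a square matrix. If you want to keep the Schur-complement flavor of your plan, you would need some entirely new mechanism to beat the polynomial tail of $\smin$ of a square random matrix uniformly in $\lambda$; I do not see one.
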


The restriction $\e<1/2$ can be easily removed, see Remark \ref{rem: large epsilon} below.

Note that we do not require any moments for the matrix entries, so heavy-tailed distributions are allowed.
However, the boundedness assumption formalized by \eqref{eq: BAM}
implicitly yields some upper bound on the tails. Indeed, if the entries of $A$
are i.i.d. and mean zero, then $\|A\| = O(\sqrt{n})$ can only hold if the
fourth moments of entries are bounded \cite{BSY}.

Further, we do not require that the entries of $A$ have mean zero.
Therefore, adding to $A$ any fixed matrix of norm $O(\sqrt{n})$ does not affect our results.

\medskip

Extending Theorem \ref{thm: delocalization continuous} to general, possibly discrete distributions,
is a challenging task. We are able to do this for matrices with identically distributed entries
and under the mild assumption that the distributions of entries are not too concentrated near a single number.

\begin{assumption}[General distribution of entries]	\label{A: general distribution}
  We assume that the real parts of the matrix entries are
  i.i.d. copies of a random variable $\xi$, which satisfies
  \begin{equation}							\label{eq: xi}
  \sup_{u \in \R} \Pr{|\xi-u| \le 1} \le 1-p, \quad \Pr{|\xi| > K} \le p/2 \quad \text{for some } K, p >0.
  \end{equation}
\end{assumption}

Among many examples of discrete random variables $\xi$ satisfying Assumption~\ref{A: general distribution},
the most prominent one is the symmetric {\em Bernoulli} random variable $\xi$,
which takes values $-1$ and $1$ with probability $1/2$ each.

With Assumption \ref{A: continuous distribution} replaced by \ref{A: general distribution}, we can prove the no-gaps delocalization result, which we summarize in the following theorem.

\begin{theorem}[Delocalization: general distributions]	\label{thm: delocalization general}
  Let $A$ be an $n \times n$ random matrix which satisfies Assumptions \ref{A} and \ref{A: general distribution}.
  Choose $M \ge 1$ such that the boundedness event $\BB_{A,M}$ holds with probability at least $1/2$.
  Let $\e \ge 1/n$ and $s \ge c_1 \e^{-7/6} n^{-1/6} +e^{-c_2/\sqrt{\e}}$.
  Then, conditionally on $\BB_{A,M}$, the following holds with probability
  at least $1-(c_3 s)^{\e n}$.
  Every eigenvector $v$ of $A$ satisfies
  $$
  \|v_I\|_2 \ge (\e s)^6 \|v\|_2 \quad \text{for all } I \subset [n], \; |I| \ge \e n.
  $$
  Here $c_k = c_k(p,K,M)>0$ for $k=1, 2, 3$.
\end{theorem}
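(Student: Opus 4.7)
The strategy parallels that of Theorem~\ref{thm: delocalization continuous}, but with anti-concentration (small-ball) estimates for sums of i.i.d.\ copies of $\xi$ replacing the density argument used there. Fix $I\subset[n]$ with $|I|=m:=\lceil\e n\rceil$ and write $J=I^c$; a union bound over the $\binom{n}{m}\le(e/\e)^{\e n}$ choices of $I$ will be absorbed into the final probability estimate. Suppose, toward a contradiction, that a unit eigenvector $v$ of $A$ with eigenvalue $\lambda$ satisfies $\|v_I\|_2\le t:=(\e s)^6$. The block decomposition of $(A-\lambda I)v=0$ together with $\|A\|\le M\sqrt n$ gives
$$
\|A_{I,J}\,v_J\|_2\le 2M\sqrt n\,t,\qquad \|(A_{J,J}-\lambda I_J)v_J\|_2\le M\sqrt n\,t,\qquad \|v_J\|_2\ge\tfrac12.
$$
A standard $\eta$-net argument over $\{|\lambda|\le M\sqrt n\}$ with $|\NN|=O(n/\eta^2)$ reduces the problem to a fixed $\lambda_0\in\NN$ at the cost of enlarging $t$ slightly.

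For such fixed $\lambda_0$, consider the $n\times(n-m)$ matrix $B_{\lambda_0}:=\mattwo{A_{I,J}}{A_{J,J}-\lambda_0 I_J}$. The inequalities above force $\smin(B_{\lambda_0})\lesssim t\sqrt n$, and our task is to show this is unlikely. The key probabilistic leverage is that, by Assumption~\ref{A}, the submatrix $A_{I,J}$ has jointly independent entries and is independent of $A_{J,J}$. Conditioning on $A_{J,J}$ and writing $\|B_{\lambda_0}w\|_2^2=\|A_{I,J}w\|_2^2+\|(A_{J,J}-\lambda_0 I_J)w\|_2^2$, we split the unit sphere of $\R^{n-m}$ according to whether the second term is $\gtrsim\rho\sqrt n$ (for a suitable $\rho\gg t$). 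On the ``far'' part the bound is immediate; on the ``near'' part, $w$ lies within $O(t/\rho)$ of the spectral subspace $V_{\lambda_0}$ of eigenvectors of $A_{J,J}$ whose eigenvalues are within $\rho\sqrt n$ of $\lambda_0$. After a $t/\rho$-perturbation, it therefore suffices to bound $\Pr{\smin(A_{I,J}|_{V_{\lambda_0}})\le\tau\sqrt n}$ for arbitrary deterministic realizations of $V_{\lambda_0}$.

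The core of the proof is this restricted invertibility estimate under Assumption~\ref{A: general distribution}. We use the Rudelson--Vershynin decomposition of $S(V_{\lambda_0})$ into compressible and incompressible vectors. On compressible vectors (those concentrated on at most $c_0\e n$ coordinates), a direct small-ball bound for sparse linear combinations of $\xi_{ij}$'s combined with an $\e$-net yields decay of the form $e^{-c m/\sqrt\e}$, which is the source of the $e^{-c_2/\sqrt\e}$ term in the hypothesis on $s$. On incompressible vectors, each coordinate of $A_{I,J}w$ is a sum $\sum_{j\in J}A_{ij}w_j$ of i.i.d.\ copies of $\xi$ weighted by coefficients that are spread over the support of $w$; Rogozin's inequality together with an LCD (least common denominator) estimate then bounds the small-ball probability of each coordinate by $O(1/\sqrt n)$, and tensorizing over the $m$ independent rows of $A_{I,J}$ produces the required decay. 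The \emph{principal obstacle} is that $V_{\lambda_0}$ is an arbitrary subspace, so the LCD analysis and the net argument must be performed on $S(V_{\lambda_0})$ rather than on the full sphere of $\R^{n-m}$; balancing the metric entropy $\dim V_{\lambda_0}\cdot\log(1/\tau)$ of the required net against the tensorized per-row small-ball factor $(O(n^{-1/2}))^m$ and the union bound factor $(e/\e)^{\e n}$ is what produces the polynomial loss $c_1\e^{-7/6}n^{-1/6}$ in the hypothesis on $s$.
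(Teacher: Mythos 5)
The proposal correctly captures the reduction to invertibility (Proposition~\ref{prop: reduction}) and the general Rudelson--Vershynin toolbox that is indeed used in the paper: compressible/incompressible decomposition, LCD-based small-ball estimates, tensorization, and covering arguments. However, the core invertibility step --- how to bound $\smin\bigl((A-\lambda_0)_{I^c}\bigr)$ --- is set up in a way that introduces a genuine gap.

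You decompose the $n\times(n-m)$ matrix $(A-\lambda_0)_{I^c}$ by rows into the block $A_{I,J}$ (size $m\times(n-m)$, all entries independent) and the square block $A_{J,J}-\lambda_0 I_J$, and then argue that vectors $w$ with $\|(A_{J,J}-\lambda_0)w\|_2$ small lie near the span $V_{\lambda_0}$ of \emph{eigenvectors} of $A_{J,J}$ with eigenvalue near $\lambda_0$, after which it ``suffices to bound $\Pr{\smin(A_{I,J}|_{V_{\lambda_0}})\le\tau\sqrt n}$ for arbitrary deterministic $V_{\lambda_0}$.'' There are two problems. First, $A_{J,J}$ need not be normal (Assumption~\ref{A} allows a transpose-pair dependence but does not impose symmetry), so the implication ``$\|(A_{J,J}-\lambda_0)w\|_2$ small $\Rightarrow$ $w$ is near an eigenvector span'' is false; the relevant object is the span of the \emph{right singular vectors} of $A_{J,J}-\lambda_0$ with small singular value. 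Second, and more seriously, nothing in your proposal controls $\dim V_{\lambda_0}$. Since $A_{I,J}$ maps into $\C^{m}$ with $m\approx\e n$, the quantity $\smin(A_{I,J}|_{V_{\lambda_0}})$ is identically zero unless $\dim V_{\lambda_0}\lesssim\e n$; bounding the number of singular values of the shifted square block $A_{J,J}-\lambda_0$ below level $\rho\sqrt n$ is essentially a local-circular-law statement, a nontrivial additional input that the paper does not use and that would likely require moment hypotheses the paper deliberately avoids.

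The paper circumvents both issues by a different row decomposition of the same matrix. After a change of notation, $(A-\lambda_0)_{I^c}$ is split into a \emph{tall} block $B$ of size roughly $(1+\e)n\times n$ --- that is, the square diagonal block \emph{together with} half of the extra rows coming from $I$ --- and a short block $G$ of size roughly $\e n\times n$ built from the remaining $I$-rows. Tallness is exactly what makes the negative second moment identity $\sum_j s_j(B)^{-2}=\sum_j\dist(B_j,H_j)^{-2}$ effective: each $H_j^\perp$ has complex dimension $\sim\e n$, and after the decoupling projection (dropping the $j$-th coordinate to restore independence) the distance is a projection onto a subspace of dimension $\sim\e n$, so that $\Pr{\dist(B'_j,H'_j)\le\tau\sqrt{\e n}}\le(C\tau)^{\e n}$ by Theorem~\ref{thm: distance general}. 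This yields the dimension bound $\dim E^-\le c\e n$ with exponentially small failure probability, where $E^-$ is the span of right singular vectors of $B$ with small singular value --- precisely the control missing from your argument. If one instead applied the negative second moment identity to the square block $A_{J,J}-\lambda_0$, the relevant orthogonal complements $(H'_j)^\perp$ would have dimension zero or one, the distance estimate would degenerate to a one-dimensional small-ball bound of order $\tau$ rather than $\tau^{\e n}$, and the weak-$L^{p}$ summation controlling $\dim E^-$ would break down. The remaining ingredients you cite (LCD, Rogozin-type bounds, nets on level sets) are used in the paper as well, but they enter through the distance theorem for the tall block and through the $G$-block covering argument, not through a restricted-invertibility estimate on an uncontrolled spectral subspace.
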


\begin{remark}  \label{rem: large epsilon}
  The restriction $s< 1/c_3$ making the theorem meaningful implies that
  $\e \in (c_4 n^{-1/7},c_5)$ for some $c_4>0$ and $c_5<1$.
  The upper bound, however, can be easily removed.
  If $\e \ge c_5$, then delocalization event
  $$
  \|v_I\|_2 \ge c_6 \|v\|_2 \quad \text{for all } I \subset [n], \; |I| \ge \e n
  $$
  holds with probability at least $1-e^{-c_7 n}$.
  This follows by applying Theorem \ref{thm: delocalization general}
  with a sufficiently small constant $\e=c_7$ which would allow to choose $s=e^{-1} c_3$.
\end{remark}

The restrictions on $\e$ and $s$ can be significantly relaxed; see the end of Section \ref{s: invertibility general}.
We did not attempt to optimize these bounds, striving for clarity of the argument in lieu of more precise estimates.

\section{Outline of the argument}  \label{s: outline}

Our approach to Theorems~\ref{thm: delocalization continuous} and \ref{thm: delocalization general}
is based on reducing delocalization to invertibility of random matrices.
We will now informally explain this reduction, which is quite flexible and can be applied
for many classes of random matrices.

\subsection{Reduction of delocalization to invertibility}				\label{s: reduction intro}

Let us argue by contradiction.
Suppose there exists a {\em localized} unit eigenvector $v$ of $A$, which means that
\begin{equation}         \label{eq: v localized outline}
\|v_I\|_2 = o(1) \quad \text{for some index subset } I \subset [n], \; |I| = \e n.
\end{equation}
Let us decompose the matrix\footnote{For convenience of notation, we skip the identity symbol
thus writing $A-\l$ for $A-\l I$.} $B := A-\l$ into two sub-matrices, $B_I$ that consists of columns
indexed by $I$ and $B_{I^c}$ with columns indexed by $I^c$. Then
\begin{equation}         \label{eq: 0 decomposed outline}
0 = Bv = B_I v_I + B_{I^c} v_{I^c}.
\end{equation}
To estimate the norm of $B_I v_I$, note that the operator norm of $B$ can be bounded as
$$
\|B_I\| \le \|B\| \le 2\|A\| = O(\sqrt{n}) \quad \text{with high probability},
$$
where we used the boundedness event \eqref{eq: BAM}.
Combining with \eqref{eq: v localized outline}, we obtain
$$
\|B_I v_I\|_2 = o(\sqrt{n}).
$$
But the identity \eqref{eq: 0 decomposed outline} implies that the norms of $B_I v_I$ and $B_{I^c} v_{I^c}$
are the same, thus
\begin{equation}         \label{eq: B not invertible outline}
\|B_{I^c} v_{I^c}\|_2 = o(\sqrt{n}).
\end{equation}
Since $v$ is a unit vector and $v_I$ has a small norm, the norm of $v_{I^c}$ is close to $1$.
Then \eqref{eq: B not invertible outline} implies that the matrix $B_{I^c}$ is {\em not well invertible} on its range.
Formally, this can be expressed as a bound on the smallest singular value:
\begin{equation}         \label{eq: smin small outline}
\smin(B_{I^c}) = o(\sqrt{n}).
\end{equation}
Recall that $B_{I^c}$ is an $n \times (n-\e n)$ random matrix. Thus we reduced delocalization to
quantitative invertibility of almost square random matrices.

\subsection{Invertibility of random matrices}

A standard expectation in the non-asymptotic random matrix theory is that random matrices
are well invertible, and the bad event \eqref{eq: smin small outline} should not hold.
For example, if the $n \times (n-\e n)$ random matrix $H = B_{I^c}$ had
all independent standard normal entries, then we would have the desired lower bound
\begin{equation}         \label{eq: Gaussian outline}
\smin(H) = \Omega(\sqrt{n})	\quad \text{with high probability},
\end{equation}
see e.g. \cite{V RMT}.
Invertibility results similar to \eqref{eq: Gaussian outline} are now available
for distributions more general than Gaussian (see \cite{RV rectangular}), and in particular
for discrete distributions. Handling discrete distributions in the invertibility problems
is considerably more challenging than discrete ones. Recent successes in these problems
were based on understanding the interaction of probability with {\em arithmetic structure}, which
was quantified via generalized arithmetic progressions in \cite{Tao-Vu Annals, Tao-Vu STOC} and approximate least common
denominators (LCD) in \cite{RV square, RV rectangular, V symmetric};
see \cite{Tao-Vu BullAMS, RV ICM} for background and references.

Nevertheless, there are significant difficulties in our situation that prevent us from deducing
\eqref{eq: Gaussian outline} for $H = B_{I^c}$ from any previous work. Let us mention
some of these difficulties.

\subsubsection{Lack of independence}
Not all entries of $A$ (and thus of $H$) may be independent.
    As we recall from Assumption~\ref{A}, we are looking for ways to control symmetric and
    non-symmetric matrices simultaneously.
    This makes it necessary to extract rectangular blocks of independent entries from matrix $H$ and modify the definition of the LCD adapting it to this block extraction.

\subsubsection{Small exceptional probability required} We need that the delocalization result, and thus
    the invertibility bound \eqref{eq: Gaussian outline}, hold uniformly over all index subsets $I$\
    of size $\e n$.
    Since there are $\binom{n}{\e n} \sim \e^{-\e n}$ such sets, we would need
    the probability of non-invertibility \eqref{eq: smin small outline} to be at most $\e^{\e n}$.
    While this is possible to achieve for real matrices with all independent entries \cite{RV rectangular},
    such small exceptional probabilities (smaller than $e^{-\e n}$)
    may not come automatically for the general case.

\subsubsection{Complex entries}
    Results of the type \eqref{eq: Gaussian outline} which hold with the probability we need are
    available only for real matrices; see in particular \cite{RV ICM, V RMT, R survey}.
    Since eigenvalues $\l$ even of real matrices may be complex,
    we must work with complex random matrices. Extending the known results to complex matrices
    is non-trivial. Indeed, in order to preserve the
    matrix-vector multiplication, we replace a complex $n \times N$ random matrix $B = R + iT$
    by the real $2n \times 2N$ random matrix $\left[ \begin{smallmatrix} R & -T \\ T & R \end{smallmatrix} \right]$.
   The real and imaginary parts $R$ and $T$ each appear twice in this matrix,
   which causes extra dependences of the entries.
   Moreover, we encounter a major problem while trying to apply the covering argument to show that the least common denominator of the subspace orthogonal to a certain set of columns of $H$ is large.
   Indeed, since we have to consider a real $2n \times 2N$ matrix,
   we will have to construct a net in a subset of the real sphere of dimension $2N$. The size of such net is exponential in the dimension. On the other hand, the number of independent rows of $R$ is only $n$, so the small ball probability will be exponential in terms of $n$. As $n<N$, the union bound would not be applicable.

   To overcome this difficulty, we introduce a stratification of the complex sphere, partitioning it according to the correlation between the real and the imaginary parts of vectors.
   This stratification, combined with a modified definition of the least common denominator, allows us to obtain stronger small ball probability estimates for weakly correlated vectors in Section \ref{s: SBP via correlations}.
   Yet the set of weakly correlated vectors has a larger complexity, which is expressed in the size of the nets. The cardinality of the nets has to be accurately estimated in Section \ref{s: net}.
   These two effects, the improvement of the small ball probability estimate and the increase of the complexity, work against each other. In Section \ref{s: distance proof}, we show that they exactly balance each other, making it possible to apply the union bound.

\subsection{Organization of the argument}

After discussing basic background material in Section~\ref{s: notation-preliminaries},
we present a formal reduction of delocalization to invertibility in Section~\ref{s: reduction}.
The rest of the paper will focus on invertibility
of random matrices. Section~\ref{s: continuous} covers continuous distributions; the main result
there is Invertibility Theorem~\ref{thm: invertibility continuous}, from which we quickly deduce
Delocalization Theorem~\ref{thm: delocalization continuous}.
These sections are relatively simple and can be read independently of the rest of the paper.

Invertibility of random matrices with general distributions is considerably more difficult.
We address this problem in Sections~\ref{s: invertibility general} -- \ref{s: general proof}.
The main result there is Invertibility Theorem~\ref{thm: invertibility general}, from which we quickly deduce
Delocalization Theorem~\ref{thm: delocalization general}.

Our general approach to invertibility follows the method developed by the authors in \cite{RV square, RV rectangular},
see also \cite{RV ICM}.
We reduce proving invertibility to the {\em distance problem}, where we seek a lower bound
on $\dist(Z,E)$ where $Z$ is a random vector with independent coordinates
and $E$ is an independent random subspace in $\R^N$.
If we choose $E$ to be a hyperplane (subspace of codimension one),
we obtain an important class of examples in the distance problem, namely
{\em sums of independent random variables}.

In Section~\ref{s: LCD and SBP} we study small ball probabilities for sums of real-valued independent
random variables, as well as their higher dimensional versions $\dist(Z,E)$. These probabilities
are controlled by the {\em arithmetic structure} of $E^\perp$, which we quantify via so-called {\em least
common denominator} (LCD) of $E^\perp$. The larger LCD, the more $E^\perp$ is arithmetically
unstructured, and the better are small ball probabilities for $\dist(Z,E)$.
We formalize this relation in the very general Theorem~\ref{thm: SBP LCD},
and then we specialize in Sections~\ref{s: special cases sums} and \ref{s: special cases projections}
to sums of independent random variables and distances to subspaces.

In Section~\ref{s: distance statement}, we state our main bound on the distance between
random vectors and subspaces; this is Theorem~\ref{thm: distance general}. In order to deduce
this result from the small ball probability bounds of Section~\ref{s: LCD and SBP}, two things need to be done:
(a) transfer the problem from complex to real, and (b) show that random subspaces are arithmetically
unstructured, i.e. the LCD of $E^\perp$ is large. The transfer to a real problem
is done in Section~\ref{s: from C to R general}, and then our main focus becomes the structure of subspaces.

By the nature of our problem, the subspaces $E^\perp$ will be the kernels of random matrices.
The analysis of such kernels starts in Section~\ref{s: kernels incompressible}. We show there that
all vectors in $E^\perp$ are {\em incompressible}, which means that they are not localized on
a small fraction of coordinates.

Unfortunately, in the process of transferring the problem from complex to real in
Section~\ref{s: from C to R general} introduces extra dependences among the entries of the random matrix.
In Section~\ref{s: SBP via correlations} we adjust our results on small ball probabilities so they are not
destroyed by those dependences. We find that these probabilities are controlled not only on LCD
but also by {\em real-imaginary correlations} of the vectors in $E^\perp$.

Recall that our goal is to show that all vectors in $E^\perp = \ker(B)$ are unstructured, i.e. they have
large LCD. We would obtain this if we can lower-bound $Bz$ for all vectors with small LCD.
For a fixed $z$, a lower bound follows from the small ball probability results of Section~\ref{s: SBP via correlations}. To make the bound uniform, it is enough to run a union bound over a good net of
the set of vectors with small LCD.
We construct a good {\em net for level sets of LCD and real-imaginary correlations} in Section~\ref{s: net}.
Informally, small LCD or small correlation impose strong constraints, which make it possible to
construct a smaller net than based on the trivial (volume-based) argument.

After this major step, the argument can be wrapped up relatively easily.
In Section~\ref{s: distance proof} we finalize the distance problem.
We combine the small ball probability results with the fact that the random subspace
are unstructured, and deduce Theorem~\ref{thm: distance general}.

In Section~\ref{s: general proof} we finalize the invertibility problem for general distributions;
here we deduce Theorem~\ref{thm: invertibility general}. This is done by modifying the argument
for continuous distributions in Section~\ref{s: continuous} using the non-trivial distance bound
Theorem~\ref{thm: distance general} for general distributions.

\section{Notation and preliminaries}			\label{s: notation-preliminaries}
Throughout the paper, by $C, c, C_1, \ldots$ we denote constants that may depend only on the parameters
$K$ and $p$ that control the distributions of matrix entries in Assumptions~\ref{A: continuous distribution}
and \ref{A: general distribution} and the parameter $M$ that controls the matrix norm in \eqref{eq: BAM}.

We denote by $S_{\R}^{n-1}$ and $S_{\C}^{n-1}$ the unit spheres of $\R^n$ and $\C^n$ respectively.
We denote by $B(a,r)$ the Euclidean ball in $\R^n$ centered at a point $a$ and with radius $r$.
The unit sphere of a subspace $E$ will be denoted $S_E$, and
the orthogonal projection onto a subspace $E$ by $P_E$.

Given an $n \times m$ matrix $A$
and an index sets $J \subset [n]$, by $A_J$ we denote the $n \times |J|$ sub-matrix of $A$
obtained by including the columns indexed by $J$. Similarly, for a vector $z \in \C^n$,
by $z_J$ we denote the vector in $\C^J$ which consists of the coefficients indexed by $J$.

\subsection{Concentration function}

The concept of {\em concentration function} has been introduced by P.~L\'evy and
studied in probability theory for several decades, see \cite{RV ICM} for the classical and recent history.

\begin{definition}[Concentration function] \label{def: concentration function}
  Let $Z$ be a random vector taking values in $\C^n$. The concentration function of $X$
  is defined as
  $$
  \LL(Z,t) = \Pr{\|Z-u\|_2 \le t}, \quad t \ge 0.
  $$
\end{definition}

The concentration function gives a uniform upper bound on the {\em small ball probabilities} for $X$.
We defer a detailed study of concentration function for sums of independent random variables to Section~\ref{s: LCD and SBP}.
Let us mention here only one elementary restriction property.
\begin{lemma}[Small ball probabilities: restriction]				\label{lem: SBP restriction}
  Let $\xi_1, \ldots, \xi_N$ be independent random variables and $a_1,\ldots,a_N$ be real numbers.
  Then, for every subset of indices $J \subset [N]$ and every $t \ge 0$ we have
  $$
  \LL \Big( \sum_{j \in J} a_j \xi_j, t \Big) \le \LL \Big( \sum_{j=1}^N a_j \xi_j, t \Big).
  $$
\end{lemma}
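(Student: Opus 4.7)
The plan is a short conditioning argument exploiting the independence of the $\xi_j$'s. Set
$$S := \sum_{j=1}^N a_j\xi_j, \quad S_J := \sum_{j\in J} a_j\xi_j, \quad Y := \sum_{j \in [N]\setminus J} a_j\xi_j,$$
so that $S = S_J + Y$ and, by independence of the $\xi_j$'s, the real-valued random variables $S_J$ and $Y$ are independent.

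Fix an arbitrary center $u \in \R$. Since $S_J = S - Y$ pointwise, the events $\{|S_J - u| \le t\}$ and $\{|S - (u+Y)| \le t\}$ coincide, and therefore
$$\P(|S_J - u| \le t) = \P(|S - (u+Y)| \le t).$$
I would then condition on $Y$ and write the right-hand side as the integral $\int \P(|S - (u+y)| \le t \mid Y = y)\, dF_Y(y)$. The next step is to recognize each integrand, namely a conditional slice $\P(|S - v| \le t \mid Y = y)$ with $v = u+y$, as a probability of $S$ landing in a window of length $2t$, and bound it by the Lévy concentration function $\LL(S, t) := \sup_{v \in \R} \P(|S - v| \le t)$. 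Since this uniform upper bound is independent of $y$, integrating in $y$ against $F_Y$ preserves the bound and gives $\P(|S_J - u| \le t) \le \LL(S, t)$. Taking the supremum over $u$ on the left yields the stated inequality $\LL(S_J, t) \le \LL(S, t)$.

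The only substantive step is the comparison between the conditional slice and the unconditional $\LL(S, t)$; this is where the independence $S_J \perp Y$ is used, since it ensures that conditioning on $Y$ does not disturb the distributional structure of $S$ relative to a deterministic target window. The rest of the argument is a one-line Fubini computation. No arithmetic-structure or small-ball machinery is needed — the lemma is designed as an elementary building block to be invoked later when restricting the underlying sum to the coordinates indexed by $J$.
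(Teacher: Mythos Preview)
Your conditioning step contains a genuine error. You write $\P(|S - (u+y)| \le t \mid Y = y)$ and bound it by $\LL(S, t)$, claiming that independence of $S_J$ and $Y$ ensures ``conditioning on $Y$ does not disturb the distributional structure of $S$.'' But $S = S_J + Y$, so conditioning on $Y = y$ \emph{does} change the law of $S$: the conditional law of $S$ given $Y=y$ is that of $S_J + y$, not the unconditional law of $S$. Hence
\[
\P\big(|S - (u+y)| \le t \,\big|\, Y = y\big) = \P\big(|S_J - u| \le t\big),
\]
which simply returns the quantity you started with and yields no bound toward $\LL(S,t)$.

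In fact the inequality as displayed is written in the wrong direction; the correct statement, and the one the paper actually invokes later (in the proof of the ``simple bound'' lemma and again in the general fixed-row-and-vector lemma), is
\[
\LL\Big(\sum_{j=1}^N a_j\xi_j,\, t\Big) \;\le\; \LL\Big(\sum_{j\in J} a_j\xi_j,\, t\Big).
\]
The paper's one-line proof establishes exactly this reversed inequality: for any $u$,
\[
\P\big(|S - u| \le t\big) = \E_Y\big[\P\big(|S_J - (u - Y)| \le t \,\big|\, Y\big)\big] \le \LL(S_J, t),
\]
where independence of $S_J$ and $Y$ is used to drop the conditioning on the inner probability; then take the supremum over $u$. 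As a sanity check that the direction you attempted is false: with $\xi_1,\xi_2,\xi_3$ i.i.d.\ symmetric $\pm 1$ and all $a_j=1$, one has $\LL(\xi_1,0)=1/2$ but $\LL(\xi_1+\xi_2+\xi_3,0)=3/8$.
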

\begin{proof}
This bound follows easily by conditioning on the random variables $\xi_j$ with $j \not\in J$
and absorbing their contribution into a fixed vector $u$ in the definition of the concentration function.
\end{proof}

We will also use a simple and useful tensorization property which goes back to \cite{V square, RV square}.

\begin{lemma}[Tensorization]				\label{lem: tensorization}
  Let $Z = (Z_1,\ldots,Z_n)$ be a random vector in $\C^n$ with independent coordinates.
  Assume that there exists numbers $t_0, M \ge 0$ such that
  $$
  \LL(Z_j, t) \le M(t+t_0) \quad \text{for all $j$ and $t \ge 0$.}
  $$
  Then
  $$
  \LL(Z,t\sqrt{n}) \le \left[ CM(t+t_0) \right]^n \quad \text{for all $t \ge 0$.}
  $$
\end{lemma}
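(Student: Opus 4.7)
The plan is to reduce the $n$-dimensional small ball probability to a product of one-dimensional ones via the classical exponential-indicator trick, and then convert the one-dimensional exponential moments into the hypothesized bound $\LL(Z_j,\cdot)\le M(\cdot+t_0)$ by tail integration.

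Fix an arbitrary $u=(u_1,\dots,u_n)\in\C^n$; it suffices to bound $\Pr{\|Z-u\|_2\le t\sqrt{n}}$ and then take the supremum over $u$. First I would use the pointwise inequality
$$
\mathbf{1}_{\{\sum_{j=1}^n a_j \le n\}} \le \exp\!\Big(n-\sum_{j=1}^n a_j\Big),
$$
applied with $a_j = |Z_j-u_j|^2/t^2$. Taking expectations and using independence of the coordinates of $Z$ gives
$$
\Pr{\|Z-u\|_2\le t\sqrt{n}} \le e^n \prod_{j=1}^n \E\exp\!\bigl(-|Z_j-u_j|^2/t^2\bigr).
$$

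The main computation is bounding each factor by $CM(t+t_0)$. By the layer-cake formula and the substitution $r=t\sqrt{\log(1/s)}$,
$$
\E\exp\!\bigl(-|Z_j-u_j|^2/t^2\bigr) = \int_0^\infty \Pr{|Z_j-u_j|\le r}\,\frac{2r}{t^2}\,e^{-r^2/t^2}\,dr.
$$
Now invoke the hypothesis: $\Pr{|Z_j-u_j|\le r}\le \LL(Z_j,r)\le M(r+t_0)$. Rescaling $r=tu$ reduces the right-hand side to $M(\tfrac{\sqrt\pi}{2}t+t_0)\le CM(t+t_0)$ for an absolute constant $C$. Substituting back yields
$$
\Pr{\|Z-u\|_2\le t\sqrt{n}} \le \bigl(eCM(t+t_0)\bigr)^n,
$$
and taking the supremum over $u$ completes the argument after adjusting the constant.

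The only step with any content is the one-dimensional exponential-moment bound; the rest is formal. I do not expect any real obstacle, since independence of the coordinates is given outright and the hypothesis is exactly what is needed to control the tail integral. The chosen exponential envelope $e^{n-\sum a_j}$ is tailored so that the $e^n$ factor is absorbed into the final constant $C$, which is why the final bound takes the form $[CM(t+t_0)]^n$ with $C$ depending only on absolute constants.
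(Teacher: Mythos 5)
Your proof is correct and matches the paper's argument essentially step for step: the same exponential-indicator (Markov) trick, the same layer-cake/change-of-variables reduction of $\E\exp(-|Z_j-u_j|^2/t^2)$ to a tail integral, and the same use of the hypothesis to bound that integral by $CM(t+t_0)$; the only cosmetic difference is that the paper first translates to $u=0$ whereas you carry $u$ throughout and take a supremum at the end. (One small notational wrinkle: you reuse the symbol $u$ both for the center of the ball and for the rescaling variable in the integral; worth renaming to avoid confusion.)
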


\begin{proof}
By translation, we can assume without loss of generality that $u=0$ in the definition
of concentration function. Thus we want to bound the probability
$$
\Pr{\|Z\|_2 \le t \sqrt{n}}  = \Pr{ \sum_{j=1}^n |Z_j|^2 \le t^2 n }.
$$
Rearranging the terms, using Markov's inequality and then independence,
we can bound this probability by
\begin{equation}         \label{eq: SBP via MGF}
\Pr{ n - \frac{1}{t^2} \sum_{j=1}^n |Z_j|^2 > 0 }
\le \E \exp \left( n - \frac{1}{t^2} \sum_{j=1}^n |Z_j|^2 \right)
= e^n \prod_{j=1}^n \E \exp(-|Z_j|^2/t^2).
\end{equation}
To bound each expectation, we use the distribution integral formula followed by a change of variables.
Thus
$$
\E \exp(-|Z_j|^2/t^2)
= \int_0^1 \Pr{\exp(-|Z_j|^2/t^2) > x} dx
= \int_0^\infty 2 y e^{-y^2} \, \Pr{|Z_j| < t y} dy.
$$
By assumption, we have $\Pr{|Z_j| < t y} \le M(ty+t_0)$. Substituting this into the integral
and evaluating it, we obtain
$$
\E \exp(-|Z_j|^2/t^2) \le CM(t+t_0).
$$
Finally, substituting this into \eqref{eq: SBP via MGF}, we see that the probability
in question is bounded by $e^n [CM(t+t_0)]^n$. This completes the proof of the lemma.
\end{proof}

\section{Reduction of delocalization to invertibility of random matrices}				\label{s: reduction}
In this section, we show how to deduce delocalization from
quantitative invertibility of random matrices. We outlined this reduction in Section~\ref{s: reduction intro}
and will now make it formal.
For simplicity of notation, we shall assume that $\e n/2 \in \N$, and
we introduce the {\em localization} event
$$
\Loc(A, \e, \d) := \left\{ \exists \text{ eigenvector } v \in S_{\C}^{n-1}, \, \exists I \subset [n], \; |I| = \e n:
\|v_I\|_2 < \d \right\}.
$$
Since we assume in Theorem~\ref{thm: delocalization continuous} that the boundedness
event $\BB_{A,M}$ holds with probability at least $1/2$, the conclusion of that theorem can stated as follows:
\begin{equation}         \label{eq: PLB}
\Pr{ \Loc(A, \e, (\e s)^6) \text{ and } \BB_{A,M} } \le (cs)^{ \e  n}.
\end{equation}
The following proposition reduces proving delocalization results like \eqref{eq: PLB} to an invertibility bound.

\begin{proposition}[Reduction of delocalization to invertibility]			\label{prop: reduction}
  Let $A$ be an $n \times n$ random matrix with arbitrary distribution.
  Let $M \ge 1$ and $\e, p_0, \d \in (0,1/2)$.
  Assume that for any number $\l_0 \in \C$, $|\l_0| \le M \sqrt{n}$,
  and for any set $I \subset [n]$, $|I| = \e n$, we have
  \begin{equation}         \label{eq: invertibility assumption}
  \Pr{\smin \big( (A - \l_0)_{I^c} \big) \le 8 \d M \sqrt{n} \text{ and } \BB_{A,M} } \le p_0.
  \end{equation}
  Then
  $$
  \Pr{ \Loc(A, \e, \d) \text{ and } \BB_{A,M} } \le 5\d^{-2} (e/\e)^{\e n} p_0.
  $$
\end{proposition}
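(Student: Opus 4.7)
The plan is to carry out rigorously the informal reduction sketched in Section~\ref{s: reduction intro}. The assumption \eqref{eq: invertibility assumption} controls $\smin\bigl((A-\l_0)_{I^c}\bigr)$ only for a fixed shift $\l_0$ and a fixed subset $I$, whereas $\Loc(A,\e,\d)$ quantifies over all eigenvalues $\l$ (a continuous parameter) and all subsets $I$ of size $\e n$ (a discrete parameter of cardinality $\binom{n}{\e n}$). So I would first absorb the continuous parameter into an $\e$-net argument, then pay the combinatorial price via a union bound over $I$.

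Assume $\Loc(A,\e,\d)\cap\BB_{A,M}$ occurs, witnessed by a unit eigenvector $v$, an eigenvalue $\l$, and a set $I$ with $|I|=\e n$ and $\|v_I\|_2<\d$. On $\BB_{A,M}$ we have $|\l|\le\|A\|\le M\sqrt n$, so $\|A-\l\|\le 2M\sqrt n$. Writing $B:=A-\l$ and $0=Bv=B_Iv_I+B_{I^c}v_{I^c}$ exactly as in \eqref{eq: 0 decomposed outline},
\[
\|B_{I^c}v_{I^c}\|_2=\|B_Iv_I\|_2\le\|B\|\cdot\|v_I\|_2\le 2M\sqrt n\cdot\d.
\]
Since $\d<1/2$, $\|v_{I^c}\|_2\ge\sqrt{1-\d^2}\ge\sqrt{3}/2$, so
\[
\smin\bigl((A-\l)_{I^c}\bigr)\le\frac{\|B_{I^c}v_{I^c}\|_2}{\|v_{I^c}\|_2}\le\frac{4}{\sqrt 3}\,\d M\sqrt n.
\]

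Now I discretize $\l$. Fix a net $\NN\subset\C$ of the disc $\{|\l|\le M\sqrt n\}$ at scale $\d M\sqrt n$; by a standard volumetric bound one may choose $|\NN|\le 5\d^{-2}$. Pick $\l_0\in\NN$ with $|\l-\l_0|\le\d M\sqrt n$. The matrix $(A-\l_0)_{I^c}-(A-\l)_{I^c}$ is a submatrix of $(\l-\l_0)I$, hence has norm at most $\d M\sqrt n$, and therefore
\[
\smin\bigl((A-\l_0)_{I^c}\bigr)\le\Bigl(\tfrac{4}{\sqrt 3}+1\Bigr)\d M\sqrt n<8\d M\sqrt n.
\]
Thus the event $\Loc(A,\e,\d)\cap\BB_{A,M}$ is contained in the union, over $\l_0\in\NN$ and over $I\subset[n]$ with $|I|=\e n$, of the events appearing in \eqref{eq: invertibility assumption}. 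Applying the hypothesis to each pair and using $\binom{n}{\e n}\le(e/\e)^{\e n}$ yields the claimed bound $5\d^{-2}(e/\e)^{\e n}p_0$.

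The argument has no real obstacle once the geometric picture of Section~\ref{s: reduction intro} is in place; the only delicate points are keeping track of the numerical constants (so that the slack from the net perturbation and the factor $\|v_{I^c}\|_2^{-1}$ together fit inside the prescribed $8\d M\sqrt n$) and using a volumetric, rather than crude, covering bound for the planar disc so that $|\NN|$ is absolutely bounded by $5\d^{-2}$ rather than a worse power of $\d^{-1}$.
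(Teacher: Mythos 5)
Your argument follows the paper's route essentially verbatim: decompose $0=(A-\l)_I v_I + (A-\l)_{I^c}v_{I^c}$, bound $\|(A-\l)_{I^c}v_{I^c}\|_2\le 2M\d\sqrt n$ on $\BB_{A,M}$, discretize $\l$ by a net in the disc $\{|z|\le M\sqrt n\}$, pass to $\l_0\in\NN$, and take a union bound over $I$ and $\l_0$. The only cosmetic differences are that you perturb $\smin$ directly via Weyl's inequality for singular values (the paper perturbs the vector norm inequality and only then divides by $\|v_{I^c}\|_2$), and you use the sharper $\|v_{I^c}\|_2\ge\sqrt{1-\d^2}$ where the paper uses $\ge 1/2$; both give ample slack against the prescribed constant $8$.

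One small numerical slip: at scale $\d M\sqrt n$, the standard volumetric bound for covering the disc of radius $M\sqrt n$ gives $|\NN|\le(1+2/\d)^2$, which exceeds $5\d^{-2}$ when $\d$ is close to $1/2$ (one needs $(\d+2)^2\le 5$, i.e.\ $\d\lesssim 0.236$). The paper avoids this by taking the net at scale $2M\d\sqrt n$, for which $(1+1/\d)^2\le 5\d^{-2}$ holds for all $\d\le 1/2$; since your perturbation slack is roughly $8-(4/\sqrt3+1)\approx 4.7$ units of $\d M\sqrt n$, you can afford the coarser net, so this is a trivially repairable choice of scale rather than a real obstruction.
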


\begin{proof}
Assume both the localization event and the boundedness event $\BB_{A,M}$ hold.
Using the definition of $\Loc(A, \e, \d)$, choose a localized eigenvalue-eigenvector pair $(\l, v)$
and an index subset $I$.
Decomposing the eigenvector as
$$
v = v_I + v_{I^c}
$$
and multiplying it by $A-\l$, we obtain
$$
0 = (A-\l) v = (A-\l)_I v_I + (A-\l)_{I^c} v_{I^c}.
$$
By triangle inequality, this yields
$$
\|(A-\l)_{I^c} v_{I^c}\|_2
= \|(A-\l)_I v_I\|_2
\le (\|A\| + |\l|) \|v_I\|_2.
$$
By the localization event $\Loc(A, \e, \d)$, we have $\|v_I\|_2 \le \d$.
By the boundedness event $\BB_{A,M}$ and since $\l$ is an eigenvalue of $A$,
we have $|\l| \le \|A\| \le M \sqrt{n}$. Therefore
\begin{equation}         \label{eq: A-l v}
\|(A-\l)_{I^c} v_{I^c}\|_2 \le 2M \d \sqrt{n}.
\end{equation}

This happens for some $\lambda$ in the disc $\{z \in \C: |z| \le M \sqrt{n}\}$.
We will now run a covering argument in order to fix $\lambda$.
Let $\NN$ be a $(2M \d \sqrt{n})$-net of that disc.
One can construct $\NN$ so that
$$
|\NN| \le \frac{5}{\d^2}.
$$
Choose $\l_0 \in \NN$ so that $|\l_0 - \l| \le 2M \d \sqrt{n}$. By \eqref{eq: A-l v}, we have
\begin{equation}         \label{eq: A-l0 v}
\|(A-\l_0)_{I^c} v_{I^c}\|_2 \le 4M \d \sqrt{n}.
\end{equation}
Since $\|v_I\|_2 \le \d \le 1/2$, we have $\|v_{I^c}\|_2 \ge \|v\|_2 - \|v_I\|_2 \ge 1/2$.
Therefore, \eqref{eq: A-l0 v} implies that
\begin{equation}         \label{eq: smin A-l0}
\smin((A - \l_0)_{I^c}) \le 8M \d \sqrt{n}.
\end{equation}

Summarizing, we have shown that the events $\Loc(A, \e, \d)$ and $\BB_{A,M}$ imply the
existence of a subset $I \subset [n]$, $|I| = \e n$, and a number $\l_0 \in \NN$, such that
\eqref{eq: smin A-l0} holds.
Furthermore, for fixed $I$ and $\l_0$, assumption \eqref{eq: invertibility assumption} states that
\eqref{eq: smin A-l0} together with $\BB_{A,M}$ hold with probability at most $p_0$.
So by the union bound we conclude that
$$
\Pr{ \Loc(A, \e, \d) \text{ and } \BB_{A,M} } \le \binom{n}{\e n} \cdot |\NN| \cdot p_0
\le \Big(\frac{e}{\e} \Big)^{\e n} \cdot \frac{5}{\d^2} \cdot p_0.
$$
This completes the proof of the proposition.
\end{proof}

\section{Invertibility for continuous distributions}		\label{s: continuous}

The reduction we made in the previous section puts invertibility of random matrices into the spotlight.
Our goal becomes to establish invertibility property \eqref{eq: invertibility assumption}.
In this section we do this for matrices for continuous distributions.

\begin{theorem}[Invertibility: continuous distributions]			\label{thm: invertibility continuous}
     Let $A$ be an $n \times n$ random matrix satisfying the assumptions of Theorem~\ref{thm: delocalization continuous}.
     Let $M \ge 1$, $\e \in (0,1)$, and let $I \subset [n]$ be any fixed subset with $|I| = \e n$.
     Then for any $t>0$, we have
     $$
     \Pr{ \smin(A_{I^c}) \le t \sqrt{n} \text{ and } \BB_{A,M} } \le (C K M t^{0.4} \e^{-1.4})^{\e n/2}.
     $$
\end{theorem}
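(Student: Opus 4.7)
My plan is to follow the standard geometric framework for random matrix invertibility: decompose the unit sphere $S_{\C}^{n-\e n-1}$ into compressible and incompressible vectors, and bound the probability of small $\|A_{I^c}v\|_2$ uniformly on each class. The structural observation driving the argument is that, under Assumption \ref{A}, dependence in $A$ is confined to symmetric pairs $A_{ij}\leftrightarrow A_{ji}$, and since the index sets $I$ and $I^c$ are disjoint, the $\e n\times (n-\e n)$ block $A_{I,I^c}$ has fully independent entries and, moreover, is independent of the square block $A_{I^c,I^c}$. This produces the orthogonal-split identity
\[
\|A_{I^c}v\|_2^2 = \|A_{I,I^c}v\|_2^2 + \|A_{I^c,I^c}v\|_2^2
\]
with the two summands independent for every fixed $v$. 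For any fixed unit $v$, the continuous-density assumption yields the per-row concentration function bound $\LL((A_{I,I^c}v)_i,s)\le CKs/\|v\|_{\infty,\R}$ (where $\|v\|_{\infty,\R}$ denotes the largest of $|\re v_j|,|\im v_j|$), so tensorizing across the $\e n$ independent rows of $A_{I,I^c}$ via Lemma \ref{lem: tensorization} gives
\[
\Pr{\|A_{I^c}v\|_2\le t\sqrt n}\le \Pr{\|A_{I,I^c}v\|_2\le t\sqrt n}\le\left(\frac{CKt}{\|v\|_{\infty,\R}\sqrt\e}\right)^{\e n}.
\]

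For compressible $v\in\Comp(\d_0,\rho_0)$ (within $\rho_0$ of a sparse vector supported on at most $\d_0(n-\e n)$ coordinates), $\|v\|_{\infty,\R}\gtrsim 1/\sqrt{\d_0 n}$ keeps the SBP bound strong. I would construct an $\eta$-net of the sparse set of cardinality at most $(CM/(t\d_0))^{O(\d_0 n)}$, choose $\eta\sim t/M$ so that the boundedness event $\BB_{A,M}$ transfers the estimate from the net to all of $\Comp$, and close with a union bound; tuning $\d_0\sim\e$ then controls this regime.

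The hard part will be the incompressible case, since there $\|v\|_{\infty,\R}\sim 1/\sqrt n$ and the per-vector SBP carries an unavoidable factor of $\sqrt n$ (the worst-case density bound $K/\|a\|_\infty$ cannot be improved without tail assumptions on the entries), so an $\eta$-net over the full $(n-\e n)$-dimensional sphere is prohibitively expensive. I would resolve this by conditioning on $A_{I^c,I^c}$ and exploiting the second summand of the orthogonal split: any $v$ with $\|A_{I^c}v\|_2\le t\sqrt n$ must also have $\|A_{I^c,I^c}v\|_2\le t\sqrt n$, which by the spectral decomposition of $A_{I^c,I^c}$ forces $v$ to concentrate almost entirely on the low-dimensional subspace $V$ spanned by the right singular vectors of $A_{I^c,I^c}$ whose singular values are at most $2t\sqrt n$. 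Netting $V$ alone costs only $(C/\eta)^{O(\dim V)}$, and a spectral-density estimate for matrices with bounded-density entries bounds $\dim V$ by $O(tn)$ with overwhelming probability, so this net cost combines with the per-vector SBP to give an acceptable bound. Summing the compressible and incompressible contributions and optimizing the parameters $\d_0$, the spectral threshold, and the net resolution produces the stated probability bound; the unusual fractional exponents $t^{0.4}$ and $\e^{-1.4}$ arise naturally from balancing these two regimes against each other.
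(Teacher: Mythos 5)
Your overall framework (compressible/incompressible split, concentration-function bound on the $\e n$ independent rows indexed by $I$, conditioning on the square block to force concentration on a soft kernel) is in the right spirit, but the key spectral step for the incompressible case has a gap, and it is precisely the step the paper engineers around by a different decomposition.

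You condition on $A_{I^c,I^c}$ and claim that, with overwhelming probability, the span $V$ of right singular vectors of $A_{I^c,I^c}$ with singular value $\le 2t\sqrt n$ has $\dim V=O(tn)$. But $A_{I^c,I^c}$ is a \emph{square} random matrix, and for a square matrix with bounded-density entries I do not see how to get such a bound with failure probability as small as $(\cdot)^{\e n}$, which is what the net over $V$ plus the union bound over $I$ ultimately demands. The natural tool, the negative second moment identity $\sum_j s_j(M)^{-2}=\sum_j\dist(M_j,H_j)^{-2}$, gives nothing useful for a square $M$: each $H_j$ is a hyperplane, so $\dist(M_j,H_j)$ has a bounded density near $0$ and $\dist(M_j,H_j)^{-2}$ has only polynomial tails (it fails to lie in weak $L^p$ for any $p\ge 1$), so the weak-type triangle inequality cannot control the sum. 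Getting a bound like ``at most $Ctn$ singular values below $t\sqrt n$'' with exponentially small exceptional probability for a general square random matrix is essentially as hard as the smallest-singular-value problem you are trying to solve.

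The paper's decomposition avoids this. After renormalizing $\e\to 2\e$, it splits the rows of $A_{I^c}$ not as $I^c$ versus $I$, but into a \emph{tall} block $B$ of size $(1+\e)n\times n$ (the $I^c$ rows together with the top half of the $I$ rows) and a small block $G$ of size $\e n\times n$ (the bottom half of the $I$ rows). Now $B$ has $\e n$ more rows than columns, so in the negative second moment identity the subspaces $H'_j$ have codimension $\e n$, the distances are $\gtrsim\sqrt{\e n}$ with probability $1-(CK\tau)^{\e n}$ (Lemma~\ref{lem: distance}), the variables $\dist^{-2}$ lie in weak $L^{\e n/2}$, and the triangle inequality for weak norms controls $\sum s_j(B)^{-2}$. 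This yields $\dim E^-\le c\e n$ with probability $1-(CK\tau)^{\e n}$, exactly the exponentially strong spectral control you need and do not have for the square block. The remaining half of the independent rows ($G$) is then enough to cover $E^-$ via the net argument. So the fix to your proposal is to not use all of $A_{I,I^c}$ for the covering step: put half its rows into the ``structured'' block to give it a rectangular aspect ratio $(1+\e)$, which is what makes the small-singular-value count tractable, and keep only the other half for the covering argument on $E^-$.

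Two smaller points. First, the claimed per-row bound $\LL((A_{I,I^c}v)_i,s)\le CKs/\|v\|_{\infty,\R}$ is not what you want: for a fixed unit incompressible $v$ the relevant density bound is $CK/\|v_J\|_2$ (via \cite{RV small ball}), which is $O(K)$ regardless of $\|v\|_\infty$; the $\|v\|_\infty$-dependent form is neither correct as stated nor needed. Second, the cross-term independence you invoke is also used by the paper (Assumption~\ref{A} gives independence of $A_{J\times J^c}$), so this part of your observation is sound; the issue is only how the rows are allocated between the two sub-blocks.
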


Before we pass to the proof of this result, let us first see how it implies delocalization.

\subsection{Deduction of Delocalization Theorem~\ref{thm: delocalization continuous}}	
\label{s: delocalization continuous}

Let $A$ be a matrix as in Theorem~\ref{thm: delocalization continuous}.
We are going to use Proposition~\ref{prop: reduction}, so let us choose
$\l_0$ and $I$ as in that proposition and try to check the invertibility
condition \eqref{eq: invertibility assumption}.
Observe that the shifted matrix $A-\l_0$ still satisfies the assumptions of
Theorem~\ref{thm: invertibility continuous}, and $\BB_{A, M}$ implies $\BB_{A-\l_0, 2M}$
because $|\l_0| \le M \sqrt{n}$. So we can apply Theorem~\ref{thm: invertibility continuous}
for $A-\l_0$ and with $2M$, which yields
$$
\Pr{\smin(A-\l_0 \, \text{Id})_{I^c} \le t \sqrt{n}
\text{ and } \BB_{A,M}} \le p_0,
$$
for $t \ge 0$, where $p_0 =  (C K M t^{0.4} \e^{-1.4})^{\e n/2}$.
Therefore invertibility condition \eqref{eq: invertibility assumption} holds for
$\d = t/8M$ and $p_0$.
Applying Proposition~\ref{prop: reduction}, we conclude that
$$
\Pr{ \Loc(A, \e, t/8M) \text{ and } \BB_{A,M} }
  \le 5 \Big(\frac{8M}{t}\Big)^2 (e/\e)^{\e n} p_0
$$
for $t \ge 0$. Setting $t = 8M (\e s)^6$ and substituting the value of $p_0$, we obtain
$$
\Pr{ \Loc(A, \e, (\e s)^6 ) \text{ and } \BB_{A,M} } \le (C(K,M) s)^{\e n}
$$
for $s \ge 0$. This completes the proof of Theorem~\ref{thm: delocalization continuous}.
\qed

\medskip

The proof of Theorem~\ref{thm: invertibility continuous} will occupy the rest of this section.

\subsection{Decomposition of the matrix}				\label{s: decomposition of matrix}

To make the proof of Theorem~\ref{thm: invertibility continuous} more convenient,
let us change notation slightly, namely replace $\e$ with $2\e$. Thus $A$ is an
$(1+2\e)n \times (1+2\e)n$ matrix and $|I| = 2\e n$.
The desired conclusion then would change to
\begin{equation}							\label{eq: invertibility continuous simplified}
\Pr{\smin(A_{I^c}) \le t \sqrt{n} \text{ and } \BB_{A,M}} \le (C K M t^{0.4} \e^{-1.4})^{\e n}.
\end{equation}
Without loss of generality, we can assume that $I$ is the interval of the {\em last} $2 \e n$ indices.

Let us decompose $A_{I^c}$ as follows:
\begin{equation}         \label{eq: A decomposed}
\bar{A} : = A_{I^c} =
\begin{bmatrix}
\phantom{X} B \phantom{X} \\ \phantom{X} G \phantom{X}
\end{bmatrix},
\end{equation}
where $B$ and $G$ are rectangular matrices of size $(1+\e)n \times n$ and $\e n \times n$ respectively.
By Assumption \ref{A}, the random matrices $B$ and $G$ are independent, and moreover
all entries of $G$ are independent.

We are going to show that either $\|Bx\|_2$ or $\|Gx\|_2$ is nicely bounded below for every vector $x \in S_{\C}^{n-1}$.
To control $B$, we use the second negative moment identity
to bound the Hilbert-Schmidt norm of the pseudo-inverse of $B$.
We deduce from it that most singular values of $B$ are not too small --
namely, all but $0.1 \e n$ singular values are bounded below by $\gtrsim \sqrt{\e n}$.
It follows that $B$ is nicely bounded below when restricted onto a subspace
of codimension $0.1 \e n$. (This subspace is formed by the corresponding singular vectors.)
Next, we condition on $B$ and we use $G$ to control the remaining $0.1 \e n$ dimensions.
A simple covering argument shows that $G$ is nicely
bounded below when restricted to a subspace of dimension $0.1 \e n$.
Therefore, either $B$ or $G$ is nicely bounded below on the entire space, and thus
$A$ is nicely bounded below on the entire space as well.

We will now pass to a detailed proof of Theorem~\ref{thm: invertibility continuous}.

\subsection{Distances between random vectors and subspaces}

In this section we start working toward bounding $B$ below on a large subspace.
We quickly reduce this problem to a control of the distance between a random
vector (a column of $B$) and a random subspace (the span of the rest of the columns).
We then prove a lower bound for this distance.

\subsubsection{Negative second moment identity}

The negative second moment identity \cite[Lemma A.4]{Tao-Vu} expresses
the Hilbert-Schmidt norm of the pseudo-inverse of $B$ as follows:
$$
\sum_{j=1}^n s_j(B)^{-2} = \sum_{i=1}^n \dist(B_j, H_j)^{-2}
$$
where $s_j(B)$ denote the singular values of $B$, $B_j$ denote
the columns of $B$, and $H_j = \Span(B_k)_{k \ne j}$.

To bound the sum above, we will establish a lower bound on the distance
between the random vector $B_j \in \C^{(1+\e)n}$
and random subspace $H_j \subseteq \C^{(1+\e)n}$ of complex dimension $n-1$.

\subsubsection{Enforcing independence of vectors and subspaces}

Let us fix $j$. If all entries of $B$ are independent, then $B_j$ and $H_j$ are independent.
However, Assumption \ref{A} leaves a possibility for $B_j$ to be correlated with $j$-th
row of $B$. This means that $B_j$ and $H_j$ may be dependent, which would
complicate the distance computation.

There is a simple way to remove the dependence by projecting out the $j$-th coordinate.
Namely, let $B'_j \in \C^{(1+\e)n-1}$ denote the vector $B_j$ with $j$-th coordinate removed,
and let $H'_j = \Span(B'_k)_{k \ne j}$. We note the two key facts. First,
$B'_j$ and $H'_j$ are independent by Assumption \ref{A}.
Second,
\begin{equation}         \label{eq: BjHj}
\dist(B_j, H_j) \ge \dist(B'_j, H'_j),
\end{equation}
since the distance between
two vectors can only decrease after removing a coordinate.

Summarizing, we have
\begin{equation}         		\label{eq: second negative moment}
\sum_{j=1}^n s_j(B)^{-2} \ge \sum_{i=1}^n \dist(B'_j, H'_j)^{-2}.
\end{equation}
Recall that $B'_j \in \C^{(1+\e)n-1}$ is a random vector with independent
entries whose real parts have densities bounded by $K$
(by Assumptions~\ref{A} and \ref{A: continuous distribution});
and $H'_j$ is an independent subspace of $\C^{(1+\e)n-1}$ of complex dimension $n-1$.

We are looking for a lower bound for the distances $\dist(B'_j, H'_j)$.
It is convenient to represent them via the orthogonal projection of $B'_j$ onto $(H'_j)^\perp$:
\begin{equation}         \label{eq: dist as proj}
\dist(B'_j, H'_j) = \| P_{E_j} B'_j \|_2,
\quad \text{where} \quad
E_j = (H'_j)^\perp.
\end{equation}

\subsubsection{Transferring the problem from $\C$ to $\R$}				\label{s: from C to R}

We will now transfer the distance problem from the complex to the real field.
To this end, we define the operation $z \mapsto \real{z}$ that makes complex vectors real
in the obvious way:
$$
\text{for } z = x+iy \in \C^N, \text{ define } \real{z} = \vectwo{x}{y} \in \R^{2N}.
$$
Similarly, we can make a complex subspace $E \subset \C^N$ real by defining
$$
\real{E} = \{ \real{z} :\; z \in E \} \subset \R^{2N}.
$$
Note that this operation doubles the dimension of $E$.

Let us record two properties that follow straight from this definition.

\begin{lemma}[Elementary properties of operation $x \mapsto \real{x}$]				\label{lem: real}
  \begin{enumerate}[1.]
    \item \label{part: PE real}
      For a complex subspace $E$ and a vector $z$, one has
      $$
      \real{P_E z} = P_{\real{E}} \real{z}.
      $$
    \item \label{part: LL real}
      For a complex-valued random vector $X$ and $r \ge 0$, one has
      $$
      \LL(\real{X},r) = \LL(X,r).
      $$
  \end{enumerate}
\end{lemma}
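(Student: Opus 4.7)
The plan is to observe that both statements reduce to the fact that the map $z \mapsto \real{z}$ is an $\R$-linear bijection $\C^N \to \R^{2N}$ which is an isometry for the standard Euclidean norms, and whose real inner product matches the real part of the Hermitian inner product. Concretely, for $z = x + iy$ and $w = u + iv$ in $\C^N$, one has $\|\real{z}\|_2^2 = \|x\|_2^2 + \|y\|_2^2 = \|z\|_2^2$ and $\ip{\real{z}}{\real{w}}_\R = \ip{x}{u} + \ip{y}{v} = \re \ip{z}{w}_\C$. Everything follows from this basic dictionary.

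For part \ref{part: PE real}, the main step is to verify the identity
$$
(\real{E})^\perp = \real{E^\perp}.
$$
For the inclusion $\supseteq$: if $z \in E$ and $w \in E^\perp$, then $\ip{z}{w}_\C = 0$, hence $\ip{\real{z}}{\real{w}}_\R = \re \ip{z}{w}_\C = 0$, so $\real{w} \perp \real{E}$. The reverse inclusion follows by dimension counting, since the isometry property ensures $\dim_\R \real{E} = 2 \dim_\C E$ and $\dim_\R \real{E^\perp} = 2(N - \dim_\C E)$, summing to $2N$. Once this is in place, write the complex orthogonal decomposition $z = P_E z + P_{E^\perp} z$, apply the $\R$-linear map $\real{\cdot}$ to obtain $\real{z} = \real{P_E z} + \real{P_{E^\perp} z}$ with the two summands in $\real{E}$ and $(\real{E})^\perp$ respectively, and invoke uniqueness of the orthogonal decomposition in $\R^{2N}$.

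For part \ref{part: LL real}, the plan is to push the identity $\|X - u\|_2 = \|\real{X} - \real{u}\|_2$, valid for every $u \in \C^N$ by the isometry property, through the definition of $\LL$. Then take suprema: since $u \mapsto \real{u}$ is a bijection from $\C^N$ onto $\R^{2N}$, the supremum over $u \in \C^N$ of $\Pr{\|X - u\|_2 \le r}$ equals the supremum over $w \in \R^{2N}$ of $\Pr{\|\real{X} - w\|_2 \le r}$, which is exactly $\LL(\real{X}, r)$.

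There is no real obstacle; the only point requiring a brief verification rather than a one-line appeal is the identity $(\real{E})^\perp = \real{E^\perp}$, which relies on the compatibility between the Hermitian inner product on $\C^N$ and the Euclidean inner product on $\R^{2N}$ through the real part. Everything else is formal bookkeeping around the fact that $\real{\cdot}$ is an isometric $\R$-linear isomorphism.
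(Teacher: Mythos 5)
Your proof is correct. The paper itself gives no proof of this lemma, merely noting that both parts ``follow straight from the definition''; your argument — using the dictionary $\ip{\real{z}}{\real{w}}_\R = \re\ip{z}{w}_\C$ to show $(\real{E})^\perp = \real{E^\perp}$ and then invoking uniqueness of orthogonal decompositions for part~1, and the isometry plus bijectivity of $u \mapsto \real{u}$ for part~2 — is the natural and correct way to fill in the details the authors left implicit.
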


Recall that the second part of this lemma is about the concentration function $\LL(X,r)$
we introduced in Section~\ref{s: notation-preliminaries}.

After applying the operation $z \mapsto \real{z}$ to the random vector $B'_j$ in \eqref{eq: second negative moment},
we encounter a problem. Since the imaginary part of $B'_j$ is fixed by Assumption~\ref{A},
only half of the coordinates of $\real{B'_j}$ will be random, and that will not be enough for us.
The following lemma solves this problem by randomizing all coordinates.

\begin{lemma}[Randomizing all coordinates]					\label{lem: Z real}
  Consider a random vector $Z=X+iY \in \C^N$ whose imaginary part $Y \in \R^N$ is fixed.
  Set $\widehat{Z} = \vectwo{X_1}{X_2} \in \R^{2N}$ where $X_1$ and $X_2$ are independent copies of $X$.
  Let $E$ be a subspace of $\C^N$. Then
  $$
  \LL(P_E Z, r) \le \LL(P_{\real{E}} \widehat{Z}, 2r)^{1/2}, \quad r \ge 0.
  $$
\end{lemma}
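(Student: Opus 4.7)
The plan is to first reduce the complex concentration function to a real one, then exploit the fact that $\real{E}$ carries extra structure inherited from $E$ being a \emph{complex} subspace, and finally apply a classical pairing trick.

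For the first step, by Lemma~\ref{lem: real} we have
\[
\LL(P_E Z, r) \;=\; \LL(\real{P_E Z}, r) \;=\; \LL\bigl(P_{\real{E}}\vectwo{X}{Y}, r\bigr).
\]
Since $Y$ is deterministic, the vector $P_{\real{E}}\vectwo{0}{Y}$ is fixed and can be absorbed into the centering point $u$ in the definition of $\LL$, so that $\LL(P_E Z, r) = \LL(P_{\real{E}}\vectwo{X}{0}, r)$.

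For the second step, because $E$ is a \emph{complex} subspace, multiplication by $i$ maps $E$ to itself. In real coordinates this means $\real{E}$ is invariant under the orthogonal map $J:\vectwo{x}{y} \mapsto \vectwo{-y}{x}$; consequently $P_{\real{E}}$ commutes with $J$. Using $J\vectwo{X_2}{0} = \vectwo{0}{X_2}$ together with linearity, we obtain the key decomposition
\[
P_{\real{E}} \widehat{Z} \;=\; P_{\real{E}}\vectwo{X_1}{0} + P_{\real{E}}\vectwo{0}{X_2} \;=\; U_1 + J U_2,
\]
where $U_1$ and $U_2$ are \emph{independent} copies of $P_{\real{E}}\vectwo{X}{0}$.

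Finally, set $p = \LL(P_{\real{E}}\vectwo{X}{0}, r) = \LL(P_E Z, r)$ and pick $u^* \in \R^{2N}$ almost attaining this supremum. Since $J$ is an isometry, the triangle inequality yields
\[
\bigl\{\|U_1 - u^*\|_2 \le r\bigr\} \cap \bigl\{\|U_2 - u^*\|_2 \le r\bigr\} \;\subseteq\; \bigl\{\|U_1 + J U_2 - u^* - Ju^*\|_2 \le 2r\bigr\}.
\]
By independence of $U_1$ and $U_2$, the left-hand event has probability arbitrarily close to $p^2$, which bounds $\LL(P_{\real{E}}\widehat{Z}, 2r)$ from below by $p^2$; taking square roots gives the lemma.

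The only non-routine point is the second step: recognizing that the complex structure of $E$ forces $\real{E}$ to be $J$-invariant, which turns $P_{\real{E}}\widehat{Z}$ into a sum of two \emph{independent} pieces drawn from the same distribution, and thereby enables the standard product-of-balls argument. Without this symmetry the two halves of $\widehat{Z}$ would be coupled through $P_{\real{E}}$ in an uncontrolled way, and no elementary pairing bound would be available.
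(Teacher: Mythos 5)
Your proof is correct and takes essentially the same route as the paper; the paper works in the complex picture (writing $\Pr{\|P_E(iX_2)-ib\|\le r} = \Pr{\|P_E X_2 - b\|\le r}$ via $\C$-linearity of $P_E$ and only converting to $\R^{2N}$ at the end through Lemma~\ref{lem: real}), while you convert to $\R^{2N}$ first and phrase the same fact as $J$-invariance of $\real{E}$. The underlying mechanism — complex linearity of $P_E$ produces two independent, isometrically related copies so the product-of-balls pairing argument applies — is identical, and your explicit identification of the $J$-invariance is a clean way to present it.
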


\begin{proof}
Recalling the definition of the concentration function, in order to bound $\LL(P_E Z, r)$
we need to choose arbitrary $a \in \C^N$ and find a uniform bound on the probability
$$
p := \Pr{ \|P_E Z - a\|_2 \le r }.
$$
By assumption, the random vector $Z = X + iY$ has fixed imaginary part $Y$.
So it is convenient to express the probability as
$$
p = \Pr{ \|P_E X - b\|_2 \le r }
$$
where $b = a - P_E(iY)$ is fixed. Let us rewrite this identity
using independent copies $X_1$ and $X_2$ of $X$ as follows:
$$
p = \Pr{ \|P_E X_1 - b\|_2 \le r } = \Pr{ \|P_E (iX_2) - ib\|_2 \le r }.
$$
(The last equality follows trivially by multiplying by $i$ inside the norm.)
By independence of $X_1$ and $X_2$ and using triangle inequality, we obtain
\begin{align*}
p^2
  &= \Pr{ \|P_E X_1 - b\|_2 \le r \text{ and } \|P_E (iX_2) - ib\|_2 \le r } \\
  &\le \Pr{ \|P_E (X_1 + iX_2) - b - ib \|_2 \le 2r } \\
  &\le \LL(P_E (X_1 + iX_2), 2r).
\end{align*}
Further, using part~\ref{part: LL real} and then part~\ref{part: PE real} of Lemma~\ref{lem: real}, we
see that
$$
\LL(P_E (X_1 + iX_2), 2r) = \LL(P_{\real{E}} (\real{X_1 + iX_2}), 2r) = \LL(P_{\real{E}} \widehat{Z}, 2r).
$$
Thus we showed that $p^2 \le \LL(P_{\real{E}} \widehat{Z}, 2r)$ uniformly in $a$.
By definition of the concentration function, this completes the proof.
\end{proof}

\subsubsection{Bounding the distances below}

We are ready to control the distances appearing in \eqref{eq: dist as proj}.

\begin{lemma}[Distance between random vectors and subspaces]		\label{lem: distance}
  For every $j \in [n]$ and $\tau>0$, we have
  \begin{equation}         \label{eq: distance}
  \Pr{\dist(B'_j, H'_j) < \tau \sqrt{\e n}} \le (C K \tau)^{\e n}.
  \end{equation}
\end{lemma}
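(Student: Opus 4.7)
The plan is to combine independence, Lemma~\ref{lem: Z real}'s complex-to-real reduction, and a small-ball estimate for projections onto subspaces. First, by Assumption~\ref{A}, $B'_j\in\C^N$ (with $N=(1+\e)n-1$) is independent of the subspace $H'_j$; after conditioning on $H'_j$, the orthogonal complement $E_j := (H'_j)^\perp \subset \C^N$ is a fixed subspace of complex dimension $\e n$. Using \eqref{eq: dist as proj}, the probability in question is $\Pr{\|P_{E_j}B'_j\|_2 < \tau\sqrt{\e n}}$, which is dominated by the concentration function $\LL(P_{E_j} B'_j, \tau\sqrt{\e n})$.

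Next, I would apply Lemma~\ref{lem: Z real} to pass to a real problem:
\[
\LL(P_{E_j} B'_j, \tau\sqrt{\e n}) \le \LL\bigl(P_{\real{E_j}} \widehat{B'_j},\, 2\tau\sqrt{\e n}\bigr)^{1/2}.
\]
Here $\widehat{B'_j}\in\R^{2N}$ has $2N$ independent coordinates with densities bounded by $K$ (using Assumption~\ref{A: continuous distribution}), and $\real{E_j}\subset\R^{2N}$ has real dimension $d := 2\e n$. It thus suffices to prove the general estimate
\[
\LL(P_V X, r) \le \Bigl(\frac{CKr}{\sqrt{d}}\Bigr)^d
\]
for any $d$-dimensional subspace $V\subset\R^m$ and any random vector $X\in\R^m$ with independent coordinates of density bounded by $K$; applied with $d=2\e n$ and $r=2\tau\sqrt{\e n}$ this yields $(CK\tau)^{2\e n}$, whose square root $(CK\tau)^{\e n}$ is the required bound.

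To establish the small-ball estimate, I would bound the density of $P_V X$ on $V$ by $(CK)^d$ and multiply by the volume $(C'r/\sqrt{d})^d$ of the $d$-dimensional ball. For the density bound, one writes $P_V X = \sum_k X_k P_V e_k$ as a sum of independent random vectors in $V$ along the lines $\Span(P_V e_k)$, extracts a subset of $d$ coordinates whose projections form a basis of $V$ (produced by Cauchy--Binet applied to an orthonormal basis of $V$), and applies the convolution inequality $\|f_1*\cdots*f_N\|_\infty\le\min_k\|f_k\|_\infty$ to bound the density of the resulting sum. The main obstacle is securing a dimension-free constant $C$: the naive basis selection via Cauchy--Binet yields a determinant of only $\gtrsim\binom{m}{d}^{-1/2}$, and avoiding a deterioration by the factor $(m/d)^{d/2}\sim(1/\e)^{\e n}$ requires either a block-partition refinement that iterates the convolution inequality across many independent blocks, or an appeal to a slicing-type inequality.
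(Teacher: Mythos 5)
Your plan traces the paper's proof step by step: express $\dist(B'_j,H'_j)$ as $\|P_{E_j}B'_j\|_2$, condition on $E_j$, invoke Lemma~\ref{lem: Z real} to pass to the real random vector $\widehat{Z}\in\R^{2N}$ with independent coordinates of density bounded by $K$, and then integrate a uniform density bound for $P_{\real{E_j}}\widehat{Z}$ over a ball of radius $O(\tau\sqrt{\e n})$ in the $2\e n$-dimensional subspace $\real{E_j}$. The only place you depart from the paper is in trying to \emph{prove} the density bound $\bigl(CK\bigr)^{d}$ for $P_V X$ from scratch. The paper does not do this: it cites the result of \cite{RV small ball}, which states precisely that the density of $P_{\real{E_j}}\widehat{Z}$ on $\real{E_j}$ is bounded by $(CK)^{2\e n}$, and then integrates. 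The obstruction you flag is genuine and is exactly why \cite{RV small ball} is a separate paper: the naive Cauchy--Binet selection of a $d\times d$ minor only guarantees a determinant of order $\binom{m}{d}^{-1/2}$, and recovering a dimension-free constant requires a more delicate argument (in \cite{RV small ball} this is done via a combination of a multilinear/Brascamp--Lieb-type inequality with Ball's theorem on slices of the cube, not by iterating the elementary convolution bound over blocks). So your outline is correct and matches the paper; to make it a proof you should simply cite the density bound from \cite{RV small ball} rather than attempt to rederive it, as your sketch of the rederivation does not close the gap you yourself identify.
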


\begin{proof}
Representing the distances via projections of $B_j'$ onto the subspaces $E_j = (H'_j)^\perp$
as in \eqref{eq: dist as proj}, and using the definition of the concentration function, we have
$$
p_j := \Pr{\dist(B'_j, H'_j) < \tau \sqrt{\e n}}
\le \LL(P_{E_j} B'_j, \, \tau \sqrt{\e n}).
$$
Recall that $B_j'$ and $E_j$ are independent, and let us condition on $E_j$.
Lemma~\ref{lem: Z real} implies that
$$
p_j \le \LL(P_{\real{E_j}} \widehat{Z}, \, \tau \sqrt{\e n})^{1/2}
$$
where $\widehat{Z}$ is a random vector with independent coordinates
that have densities bounded by $K$.

Recall that $H'_j$ has codimension $\e n$; thus $E_j$ has dimension $\e n$
and $\real{E_j}$ has dimension $2\e n$.
We can use a bound on the small ball probability from \cite{RV small ball}, which states
that the density of $P_{\real{E_j}} \widehat{Z}$ is bounded by $(CK)^{2\e n}$.
Integrating the density over a ball of radius $2 \tau \sqrt{\e n}$ in the subspace $\real{E_j}$
that has volume $(C\tau)^{2\e n}$, we conclude that
$$
\LL(P_{\real{E_j}} \widehat{Z}, \, \tau \sqrt{\e n}) \le (C K \tau)^{2 \e n}.
$$
It follows that
$$
p_j \le (C K \tau)^{\e n},
$$
as claimed. The proof of Lemma~\ref{lem: distance} is complete.
\end{proof}

\subsection{$B$ is bounded below on a large subspace $E^+$}

\subsubsection{Plugging the distance bound into second moment inequality}		\label{s: dist into smi}

In order to substitute the bound \eqref{eq: distance} into the negative second moment
inequality \eqref{eq: second negative moment},
let us recall some classical facts about the weak $L^p$ norms. The weak $L^p$ norm
of a random variable $Y$ is defined as
\[
  \|Y\|_{p,\infty} = \sup_{t>0} \, t \cdot ( \Pr{|Y|>t} )^{1/p}.
\]
This is not a norm but is
equivalent to a norm if $p>1$. In particular, the weak triangle inequality holds:
\begin{equation}         \label{eq: weak Lp triangle}
\|\sum_i Y_i\|_{p,\infty} \le C(p) \sum_i \|Y_i\|_{p,\infty}
\end{equation}
where $C(p)$ is bounded above by an absolute constant for $p \ge 2$, see \cite{Stein Weiss}, Theorem 3.21.

The bound \eqref{eq: distance} means that $Y_i := \dist(B_i, H_i)^{-2}$ are in weak $L^p$
for $p = \e n/ 2$, and that $\|Y_i\|_{p,\infty} \le C^2K^2/\e n$. Since by assumption $p \ge 2$,
the weak triangle inequality  \eqref{eq: weak Lp triangle}
yields $\|\sum_{i=1}^n Y_i\|_{p,\infty} \le C_2^2K^2/\e$. This in turn means that
$$
\Pr{\sum_{i=1}^n \dist(B_i, H_i)^{-2} > \frac{1}{\tau^2 \e}} \le (C_2 K\tau)^{\e n}, \quad \tau > 0.
$$
Therefore, by the second negative moment identity \eqref{eq: second negative moment},
the event
\begin{equation}         \label{eq: sum moments small}
\EE_1 := \left\{ \sum_{i=1}^n s_i(B)^{-2} \le \frac{1}{\tau^2 \e} \right\}
\end{equation}
is likely: $\P((\EE_1)^c) \le (C_2 K \tau)^{\e n}$.

\subsubsection{A large subspace $E^+$ on which $B$ is bounded below}		\label{s: E+}

Fix a parameter $\tau>0$ for now, and assume that the event \eqref{eq: sum moments small} occurs.
By Markov's inequality, for any $\d >0$ we have
$$
\Big| \big\{ i: \; s_i(B) \le \d \sqrt{n} \big\} \Big|
= \Big| \big\{ i: \; s_i(B)^{-2} \ge \frac{1}{\d^2n} \big\} \Big|
\le \frac{\d^2 n}{\tau^2 \e}.
$$
Let $c \in (0,1)$ be a small absolute constant. Choosing $\d = c \tau \e$, we have
\begin{equation}         \label{eq: small singular values}
\Big| \big\{ i: \; s_i(B) \le c \tau \e \sqrt{n} \big\} \Big| \le c \e n.
\end{equation}
Let $v_i(B)$ be the right singular vectors of $B$, and
consider the (random) orthogonal decomposition
$\C^n = E^- + E^+$,
where
$$
E^- = \Span\{v_i(B): \; s_i(B) \le c \tau \e \sqrt{n}\}, \quad
E^+ = \Span\{v_i(B): \; s_i(B) > c \tau \e \sqrt{n}\}.
$$
Inequality \eqref{eq: small singular values} means that $\dim_{\C}(E^-) \le c \e n$.

Let us summarize. We obtained that the event
\begin{equation}							\label{eq: E- large}
\DD_{E^-} := \left\{ \dim(E^-) \le c \e n \right\}
\text{ satisfies }
\P((\DD_{E^-})^c) \le (C_2 K \tau)^{\e n},
\end{equation}
so $E^-$ is likely to be a small subspace and $E^+$ a large subspace.
Moreover, by definition, $B$ is nicely bounded below on $E^+$:
\begin{equation}							\label{eq: B bounded below}
\inf_{x \in S_{E^+}} \|Bx\|_2 \ge c \tau \e \sqrt{n}.
\end{equation}

\subsection{$G$ is bounded below on the small complementary subspace $E^-$}		\label{s: G below on E-}

Recall that the subspaces $E^+$ and $E^-$ are determined by the sub-matrix $B$,
so these subspaces are independent of $G$ by Assumption \ref{A}. Let us fix $B$ so that $\dim(E^-) \le c \e n$;
recall this is a likely event by \eqref{eq: E- large}.

Note that $G$ is an $\e n \times n$ random matrix with independent entries.
We are going to show that $G$ is well bounded below when restricted onto
the fixed subspace $E^-$. This can be done by a standard covering argument,
where a lower bound is first proved for a fixed vector, then extended to
a $\d$-net of the sphere by a union bound, and finally to the whole sphere
by approximation.

\subsubsection{Lower bounds on a fixed vector}				\label{s: lower on fixed vector}

\begin{lemma}[Lower bound for a fixed row and vector]			\label{lem: fixed row and vector}
  Let $G_j$ denote the $j$-th row of $G$.
  Then for each $j$, $z \in S_{\C}^{n-1}$, and $\theta \ge 0$, we have
  \begin{equation}							\label{eq: fixed row vector}
  \Pr{ |\ip{G_j}{z}| \le \theta} \le C_0 K \theta.		
  \end{equation}
\end{lemma}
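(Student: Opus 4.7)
The plan is to reduce the complex-valued small-ball bound to a one-dimensional question about a weighted sum of independent real random variables, and then apply a classical density bound.

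First I would split $z = a + ib$ with $a, b \in \R^n$, so $\|a\|_2^2 + \|b\|_2^2 = 1$; by swapping the roles of real and imaginary parts in the argument if necessary, we may assume $\|a\|_2 \ge 1/\sqrt 2$. Using Assumptions~\ref{A} and~\ref{A: continuous distribution}, I would write the entries of the row as $G_{jk} = X_k + i Y_k$, where the $X_k$ are independent real random variables with densities bounded by $K$ and the $Y_k$ are deterministic. A direct expansion then gives
$$\re \ip{G_j}{z} = \sum_{k=1}^n a_k X_k + c,$$
where $c$ is a deterministic constant depending only on the fixed vector $(Y_k)$ and on $b$.

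Since $|\ip{G_j}{z}| \le \theta$ forces $|\re \ip{G_j}{z}| \le \theta$, the probability in question is bounded by the small-ball probability of the single real-valued random variable $S := \sum_k a_k X_k$ around the point $-c$. Here I would invoke the classical density-of-a-sum estimate going back to Rogozin (equivalent to the one-dimensional case of the projection bound from \cite{RV small ball} used elsewhere in the paper): if the $X_k$ are independent with densities bounded by $K$, then the density of $\sum_k a_k X_k$ is bounded by $C K/\|a\|_2$. Since $\|a\|_2 \ge 1/\sqrt 2$, this yields a density bound of order $K$, and integrating over an interval of length $2\theta$ gives
$$\Pr{|S+c| \le \theta} \le C_0 K \theta,$$
as claimed.

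The argument is essentially a warm-up and presents no real obstacle; the only point worth emphasizing is that one should use the $\|a\|_2$-normalized density bound rather than the cruder approach of conditioning on all coordinates except the one with the largest $|a_k|$, which would only produce an estimate of order $K\sqrt n\, \theta$ (since $\|a\|_\infty$ may be as small as $1/\sqrt n$). The $n$-independent constant obtained above is exactly what the ensuing covering argument over $E^-$ requires, since the union bound there will pay a cost exponential in $\dim E^- \lesssim \e n$.
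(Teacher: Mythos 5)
Your proof is correct and follows essentially the same route as the paper's: decompose $z$ into real and imaginary parts, assume without loss of generality that the real part has $\ell_2$-norm bounded below, reduce to a one-dimensional small-ball estimate for $\sum_k a_k X_k$ (a sum of independent real random variables with densities bounded by $K$, shifted by a deterministic constant), and invoke the density bound from \cite{RV small ball}. The only cosmetic difference is that the paper absorbs the normalization $\|a\|_2\ge 1/2$ directly into the constant rather than writing the bound in the form $CK/\|a\|_2$.
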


\begin{proof}
Fix $j$ and consider the random vector $Z = G_j$. Expressing $Z$ and $z$
in terms of their real and imaginary parts as
$$
Z = X+iY, \quad z = x + iy,
$$
we can write the inner product as
$$
\ip{Z}{z} = \left[ \ip{X}{x} - \ip{Y}{y} \right] + i \left[ \ip{X}{y} + \ip{Y}{x} \right].
$$

Since $z$ is a unit vector, either $x$ or $y$ has norm at least $1/2$.
Assume without loss of generality that $\|x\|_2 \ge 1/2$. Dropping the imaginary part, we obtain
$$
|\ip{Z}{z}| \ge \left| \ip{X}{x} - \ip{Y}{y} \right|.
$$
Recall that the imaginary part $Y$ is fixed by Assumption~\ref{A}. Thus
\begin{equation}         \label{eq: SBP Zz}
\Pr{ |\ip{Z}{z}| \le \theta} \le \LL( \ip{X}{x}, \theta ).
\end{equation}

We can express $\ip{X}{x}$ in terms of the coordinates of $X$ and $x$ as the sum
$$
\ip{X}{x} = \sum_{k=1}^n X_k x_k.
$$
Since $X_k$ are the real parts of independent entries of $G$,
Assumptions~\ref{A} and \ref{A: continuous distribution} imply that
$X_k$ are independent random variables with densities bounded by $K$.
Recalling that $\sum_{k=1}^n x_k^2 \ge 1/2$,
we can apply a known result about the densities of sums of independent random variables, see \cite{RV small ball}.
It states that the density of $\sum_{k=1}^n X_k x_k$
is bounded by $C K$. It follows that
\begin{equation}         \label{eq: SBP Xx}
\LL( \ip{X}{x}, \theta ) \le C K \theta.
\end{equation}
Substituting this into \eqref{eq: SBP Zz} completes the proof of Lemma~\ref{lem: fixed row and vector}.
\end{proof}

\begin{lemma}[Lower bound for a fixed vector]		\label{lem: fixed vector}
  For each $x \in S_{\C}^{n-1}$ and $\theta >0$, we have
  $$
  \Pr{ \|Gx\|_2 \le \theta \sqrt{\e n} } \le (C_0 K \theta)^{\e n}.
  $$
\end{lemma}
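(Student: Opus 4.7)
The plan is to derive this from Lemma~\ref{lem: fixed row and vector} together with the tensorization property recorded in Lemma~\ref{lem: tensorization}. Writing $G_1,\ldots,G_{\e n}$ for the rows of $G$, we have the identity
$$
\|Gx\|_2^2 = \sum_{j=1}^{\e n} |\ip{G_j}{x}|^2,
$$
and by Assumption~\ref{A} the rows of $G$ are jointly independent (all entries of $G$ are independent, as noted after the decomposition \eqref{eq: A decomposed}). Consequently the coordinates of the random vector $Z := (\ip{G_1}{x},\ldots,\ip{G_{\e n}}{x}) \in \C^{\e n}$ are independent, and $\|Z\|_2 = \|Gx\|_2$.

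Next, I would feed the one-dimensional bound from Lemma~\ref{lem: fixed row and vector} into tensorization. That lemma gives, for each coordinate,
$$
\LL(Z_j, t) = \sup_{u \in \C} \Pr{|\ip{G_j}{x} - u| \le t} \le C_0 K\, t, \qquad t \ge 0,
$$
since the argument of Lemma~\ref{lem: fixed row and vector} applies verbatim after absorbing a complex translation into $a$ (it only shifts the fixed vector, not the random part $X$). Thus the hypothesis of Lemma~\ref{lem: tensorization} holds with $M = C_0 K$ and $t_0 = 0$.

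Applying the tensorization lemma with $n$ replaced by $\e n$ yields
$$
\LL\bigl( Z, t\sqrt{\e n}\bigr) \le \bigl[ C \cdot C_0 K \cdot t \bigr]^{\e n}
$$
for every $t \ge 0$. Setting $t = \theta$ and bounding the probability by the concentration function,
$$
\Pr{\|Gx\|_2 \le \theta \sqrt{\e n}} \le \LL(Z, \theta \sqrt{\e n}) \le (C_0' K \theta)^{\e n},
$$
which is the claimed bound after relabelling the constant. There is no real obstacle here: the only thing to keep straight is that we genuinely have the full independence of the rows of $G$ (guaranteed by the rectangular block structure of the decomposition \eqref{eq: A decomposed} and Assumption~\ref{A}), which is exactly what lets us apply Lemma~\ref{lem: tensorization} to the scalar products.
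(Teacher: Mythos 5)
Your proposal is correct and follows essentially the same route as the paper: decompose $\|Gx\|_2^2$ into the independent row contributions $|\ip{G_j}{x}|^2$, invoke Lemma~\ref{lem: fixed row and vector} for the coordinate-wise small ball bound, and conclude via Tensorization Lemma~\ref{lem: tensorization}. The only thing you add is the (correct) observation that the bound of Lemma~\ref{lem: fixed row and vector} holds uniformly over complex translates, so it really is a bound on the concentration function $\LL(Z_j,t)$ as required by tensorization; this is implicit in the paper since its proof already bounds $\LL(\ip{X}{x},\theta)$.
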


\begin{proof}
We can represent $\|Gx\|_2^2$ as a sum of independent random variables
$\sum_{j=1}^{\e n} |\ip{G_j}{x} |^2$.
Each of the terms $\ip{G_j}{x}$ satisfies \eqref{eq: fixed row vector}.
Then the conclusion follows from Tensorization Lemma~\ref{lem: tensorization}.
\end{proof}

\subsubsection{Lower bound on a subspace}

\begin{lemma}[Lower bound on a subspace]		\label{lem: G bounded below on E}
  Let $M \ge 1$ and $\mu \in (0,1)$.
  Let $E$ be a fixed subspace of $\C^n$ of dimension at most $\mu \e n$.
  Then, for every $\theta >0$, we have
  \begin{equation}							\label{eq: G bounded below on E}
  \Pr{\inf_{x \in S_E} \|Gx\|_2 < \theta \sqrt{\e n} \text{ and } \BB_{G,M}}
  \le \left[ C K (M/\sqrt{\e})^{2 \mu} \theta^{1-2 \mu} \right]^{\e n}.
  \end{equation}
\end{lemma}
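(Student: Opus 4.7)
The plan is to prove Lemma \ref{lem: G bounded below on E} by a standard $\delta$-net and union bound argument, using Lemma \ref{lem: fixed vector} as the pointwise input and the boundedness event $\BB_{G,M}$ to control the approximation error.

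First I would fix the subspace $E$ with $\dim_{\C}(E) = d \le \mu \e n$, and observe that $S_E$, viewed as a subset of the real unit sphere in $\R^{2d}$, admits a $\delta$-net $\NN$ of cardinality $|\NN| \le (3/\delta)^{2d} \le (3/\delta)^{2 \mu \e n}$ by the standard volumetric estimate. Here $\delta \in (0,1)$ is a parameter to be tuned at the end.

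Next I would apply Lemma \ref{lem: fixed vector} to each $y \in \NN$ with threshold $2\theta$, obtaining
\[
\Pr{\|Gy\|_2 \le 2\theta \sqrt{\e n}} \le (2 C_0 K \theta)^{\e n},
\]
and take a union bound over $\NN$:
\[
\Pr{\exists y \in \NN: \|Gy\|_2 \le 2\theta \sqrt{\e n}}
\le (3/\delta)^{2\mu \e n} (2 C_0 K \theta)^{\e n}.
\]
Then, on the complement of this event together with $\BB_{G,M}$, any $x \in S_E$ can be approximated by some $y \in \NN$ with $\|x-y\|_2 \le \delta$, so
\[
\|Gx\|_2 \ge \|Gy\|_2 - \|G\| \cdot \|x-y\|_2 \ge 2 \theta \sqrt{\e n} - M \sqrt{n} \, \delta.
\]
Choosing $\delta = \theta \sqrt{\e}/M$ makes the second term equal to $\theta \sqrt{\e n}$, so $\|Gx\|_2 \ge \theta \sqrt{\e n}$, as desired.

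It remains to combine these bounds. With this choice of $\delta$, the net has size at most $(3M/(\theta \sqrt{\e}))^{2\mu \e n}$, so
\[
\Pr{\inf_{x \in S_E} \|Gx\|_2 < \theta \sqrt{\e n} \text{ and } \BB_{G,M}}
\le \left[ 2 \cdot 3^{2\mu} C_0 K \, (M/\sqrt{\e})^{2\mu} \, \theta^{1 - 2\mu} \right]^{\e n},
\]
which gives the claimed bound since $\mu \in (0,1)$ makes $3^{2\mu}$ an absolute constant. The only delicate point in the argument is book-keeping the exponents of $\theta$ correctly: the net size contributes $\theta^{-2\mu}$ while the pointwise small-ball bound contributes $\theta^{1}$, combining to $\theta^{1-2\mu}$; since we will eventually apply this lemma with $\mu<1/2$ so that this power is positive, no obstacle arises. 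There is no conceptual difficulty here; this is a clean covering estimate, and the lemma serves simply to translate the per-vector bound of Lemma \ref{lem: fixed vector} into a uniform bound on the small subspace $E^-$ from Section \ref{s: E+}.
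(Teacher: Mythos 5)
Your proof is correct and follows essentially the same route as the paper's: a $\delta$-net of $S_E$ of cardinality $(3/\delta)^{2\mu\e n}$, the pointwise bound of Lemma~\ref{lem: fixed vector} applied at scale $2\theta$, a union bound, and the approximation step on the boundedness event with $\delta = \theta\sqrt{\e}/M$. The only difference is cosmetic — you argue in the forward direction (on the good event, every $x\in S_E$ is well-separated from zero) while the paper argues by contrapositive (if some $x$ is bad, some net point is bad) — and your exponent bookkeeping matches the claimed bound exactly.
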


\begin{proof}
Let $\d \in (0,1)$ to be chosen later.
Since the dimension of $\real{E} \subset \R^{2n}$ is at most $2\mu \e n$, standard volume considerations imply the existence of a $\delta$-net $\NN \subset S_E$ with
\begin{equation}							\label{eq: NN size}
|\NN| \le \Big(\frac{3}{\delta} \Big)^{2 \mu \e n}.
\end{equation}
Assume that the event in the left hand side of \eqref{eq: G bounded below on E} occurs.
Choose $x \in S_E$ such that $\|Gx\|_2 < \theta \sqrt{\e n}$. Next, choose $x_0 \in \NN$ such that
$\|x-x_0\|_2 \le \d$. By triangle inequality and using $\BB_{G,M}$, we obtain
$$
\|Gx_0\|_2 \le \|Gx\|_2 + \|G\| \cdot \|x-x_0\|_2
\le \theta \sqrt{\e n} + M \sqrt{n} \cdot \d
$$
Choosing $\d := \theta \sqrt{\e} / M$, we conclude that $\|Gx_0\|_2 \le 2 \theta \sqrt{\e n}$.

Summarizing, we obtained that the probability of the event in the left hand
side of \eqref{eq: G bounded below on E} is bounded by
$$
\Pr{ \exists x_0 \in \NN: \; \|Gx_0\|_2 \le 2 \theta \sqrt{\e n} }.
$$
By Lemma~\ref{lem: fixed vector} and a union bound, this in turn is bounded by
$$
|\NN| \cdot (2C_0K \theta)^{\e n} \le \Big(\frac{3M}{\theta \sqrt{\e}} \Big)^{2 \mu \e n} \cdot (2C_0K \theta)^{\e n},
$$
where we used \eqref{eq: NN size} and our choice of $\delta$.
Rearranging the terms completes the proof.
\end{proof}

\subsubsection{Conclusion: $G$ is bounded below on a large subspace $E^-$}		\label{s: G bdd on E-}

We can apply Lemma~\ref{lem: G bounded below on E}
for the subspace $E = E^-$ constructed in Section~\ref{s: E+}.
We do this conditionally on $B$, for a fixed choice of $E^-$ that satisfies $\DD_{E^-}$,
thus for $\mu \le c < 0.05$. This yields the following.

\begin{lemma}[$G$ is bounded below on $E^-$]			\label{lem: G bounded below}
  For every $\theta >0$, we have
  $$
  \Pr{\inf_{x \in S_{E^-}} \|Gx\|_2 < \theta \sqrt{\e n} \text{ and } \DD_{E^-} \text{ and } \BB_{G,M}}
  \le \big( C K M^{0.1} \e^{-0.05} \theta^{0.9} \big)^{\e n}.
  $$
  Recall that $\DD_{E^-}$ is the likely event defined in \eqref{eq: E- large}. \qed
\end{lemma}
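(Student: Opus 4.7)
The proof is a direct specialization of Lemma~\ref{lem: G bounded below on E} conditionally on the matrix $B$, so my plan is essentially bookkeeping; the work has already been done.

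First I would verify the independence of $B$ and $G$ needed to justify the conditioning. By the decomposition \eqref{eq: A decomposed}, the rows of $B$ and the rows of $G$ are disjoint subsets of the rows of $\bar{A} = A_{I^c}$. Given any entry $A_{ij}$ of $G$ (so $i$ belongs to the last $\e n$ rows of $A$ and $j \in I^c$), Assumption~\ref{A} tells us that $A_{ij}$ is independent of every other entry of $A$ except possibly its transpose $A_{ji}$; but $A_{ji}$ sits in column $i$, which lies in $I$, hence outside $\bar{A}$. Therefore $G$ is independent of $B$.

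Since the subspace $E^-$ was constructed purely from the singular vectors of $B$ in Section~\ref{s: E+}, it is a (measurable) function of $B$, and is independent of $G$. I would now condition on $B$. On the event $\DD_{E^-}$, the conditioned subspace $E^-$ is a fixed (nonrandom) subspace of $\C^n$ with $\dim(E^-) \le c\e n$ where $c$ is the small absolute constant from Section~\ref{s: E+}, chosen so that $c < 0.05$. Applying Lemma~\ref{lem: G bounded below on E} with $E = E^-$ and $\mu = c$ yields, for every $\theta > 0$,
\[
  \Pr[ \bigl. ]{\inf_{x \in S_{E^-}} \|Gx\|_2 < \theta\sqrt{\e n} \text{ and } \BB_{G,M} \,\Big|\, B}
  \le \bigl[ CK \, (M/\sqrt{\e})^{2c} \, \theta^{1-2c} \bigr]^{\e n}.
\]

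Finally I would absorb the exponents $2c$ into the clean numbers $0.1$ and $0.05$ promised in the statement. Since $M \ge 1$ and $2c \le 0.1$ we have $M^{2c} \le M^{0.1}$, and since $\e < 1$ and $c \le 0.05$ we have $\e^{-c} \le \e^{-0.05}$. For $\theta \le 1$, the inequality $1 - 2c \ge 0.9$ gives $\theta^{1-2c} \le \theta^{0.9}$, and for $\theta \ge 1$ the claimed inequality is trivial after enlarging $C$ (since the probability is at most $1$). Integrating the conditional bound over the $B$-marginal (restricted to $\DD_{E^-}$) yields the stated bound. There is no real obstacle: the only non-mechanical ingredient is noting the independence of $B$ and $G$ under Assumption~\ref{A}, which boils down to the observation above about the location of the transposes.
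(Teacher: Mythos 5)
Your proposal is correct and takes essentially the same approach as the paper: the paper also conditions on $B$, notes that $E^-$ is a function of $B$ independent of $G$, and applies Lemma~\ref{lem: G bounded below on E} with $\mu \le c < 0.05$ (the paper states the independence of $B$ and $G$ without spelling out the transpose-location argument you give, but that is exactly the reason). Your bookkeeping to replace $2\mu$ and $1-2\mu$ by $0.1$, $0.05$, and $0.9$ is also sound, including the trivial case $\theta \ge 1$.
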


\subsection{Proof of invertibility}	 \label{s: deduction of invertibility}

\subsubsection{Decomposing invertibility}

The following lemma reduces invertibility of $\bar{A}$ to invertibility of $B$ on $E^+$
and $G$ on $E^-$.

\begin{lemma}[Decomposition]			\label{lem: A decomposition}
  Let $A$ be an $m \times n$ matrix. Let us decompose $A$ as
  $$
  A =
  \begin{bmatrix}
  \phantom{X} B \phantom{X} \\ \phantom{X} G \phantom{X}
  \end{bmatrix},
  \quad B \in \C^{m_1 \times n}, \; G \in \C^{m_2 \times n}, \; m=m_1+m_2.
  $$
  Consider the orthogonal decomposition
  $\C^n = E^- + E^+$
  where $E^-$ and $E^+$ are eigenspaces\footnote{In other words, $E^-$ and $E^+$
  are the spans of two disjoint subsets of right singular vectors of $B$.}
   of $B^* B$. Denote
  $$
  s_A = \smin(A), \; s_B = \smin(B|_{E^+}), \; s_G = \smin(G|_{E^-}).
  $$
  \begin{equation}         \label{eq: A decomposition}
  s_A \ge \frac{s_B s_G}{4 \|A\|}.
  \end{equation}
\end{lemma}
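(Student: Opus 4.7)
\medskip

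\noindent\textbf{Proof plan for Lemma \ref{lem: A decomposition}.} The plan is to bound $\|Ax\|_2$ from below for an arbitrary unit vector $x\in\C^n$, using the two component inequalities $\|Ax\|_2\ge\|Bx\|_2$ and $\|Ax\|_2\ge\|Gx\|_2$ together with the orthogonal splitting $x=x_++x_-$ with $x_\pm\in E^\pm$. Write $\alpha=\|x_+\|_2$, $\beta=\|x_-\|_2$, so $\alpha^2+\beta^2=1$.

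The first key step is to use $B$: since $E^+$ and $E^-$ are spans of disjoint sets of right singular vectors of $B$, the images $Bx_+$ and $Bx_-$ are orthogonal (because $B^*B$ preserves both subspaces and they are orthogonal), hence
\[
\|Bx\|_2^2 = \|Bx_+\|_2^2 + \|Bx_-\|_2^2 \ \ge\ s_B^2\,\alpha^2,
\]
which gives $\|Ax\|_2\ge s_B\,\alpha$. The second key step uses $G$: by the reverse triangle inequality,
\[
\|Gx\|_2 \ \ge\ \|Gx_-\|_2 - \|Gx_+\|_2 \ \ge\ s_G\,\beta - \|G\|\,\alpha \ \ge\ s_G\,\beta - \|A\|\,\alpha,
\]
since $\|G\|\le\|A\|$ (as $G$ is a sub-block of $A$). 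Therefore
\[
\|Ax\|_2 \ \ge\ \max\bigl(s_B\,\alpha,\ s_G\,\beta - \|A\|\,\alpha\bigr).
\]

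The remaining step is to show this maximum is at least $s_Bs_G/(4\|A\|)$ for every admissible $(\alpha,\beta)$. Split into two cases according to the size of $\alpha$. If $\alpha\ge s_G/(2\|A\|)$, the first term alone gives $s_B\alpha\ge s_Bs_G/(2\|A\|)$, well above the target. If on the contrary $\alpha<s_G/(2\|A\|)$, note that $s_G\le\|G\|\le\|A\|$ forces $\alpha<1/2$, hence $\beta\ge\sqrt{3}/2$, and the second term satisfies
\[
s_G\,\beta - \|A\|\,\alpha \ \ge\ \tfrac{\sqrt{3}}{2}\,s_G - \tfrac{1}{2}\,s_G \ \ge\ \tfrac{s_G}{4} \ \ge\ \tfrac{s_Bs_G}{4\|A\|},
\]
the last inequality again using $s_B\le\|A\|$. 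Taking the infimum over unit $x$ yields the claimed bound on $s_A=\smin(A)$.

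I don't anticipate a real obstacle: the proof is essentially a two-line geometric argument, and the only mild subtlety is remembering that the singular-subspace decomposition of $B^*B$ gives the Pythagorean identity $\|Bx\|_2^2=\|Bx_+\|_2^2+\|Bx_-\|_2^2$, which is what lets us discard $x_-$ when estimating via $B$ and discard $x_+$ (up to an $\|A\|\alpha$ error) when estimating via $G$. The constant $4$ is not optimal and could be sharpened by tuning the threshold between the two cases, but the stated bound suffices for the subsequent application.
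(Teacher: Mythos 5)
Your proof is correct and follows essentially the same route as the paper: decompose $x = x^+ + x^-$ orthogonally, use the Pythagorean identity $\|Bx\|_2^2 = \|Bx^+\|_2^2 + \|Bx^-\|_2^2$ (from the eigenspace structure of $B^*B$) to handle the case where $\|x^+\|_2$ is not too small, and use the reverse triangle inequality with $G$ otherwise. The only cosmetic difference is that you fix the case threshold at $\alpha = s_G/(2\|A\|)$ directly, whereas the paper introduces a free parameter $\lambda$ and optimizes it at the end; both routes give the same constant after invoking $s_B, s_G \le \|A\|$.
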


\begin{proof}
Let $x \in S^{n-1}$. We consider the orthogonal decomposition
$$
x = x^- + x^+, \quad x^- \in E^-, \, x^+ \in E^+.
$$
We can also decompose $Ax$ as
$$
\|Ax\|_2^2 = \|Bx\|_2^2 + \|Gx\|_2^2.
$$
Let us fix a parameter $\l \in (0,1/2)$ and consider two cases.

\smallskip

{\em Case 1: $\|x^+\|_2 \ge \l$.} Then
$$
\|Ax\|_2 \ge \|Bx\|_2 \ge \|Bx^+\|_2 \ge s_B \cdot \l.
$$

{\em Case 2: $\|x^+\|_2 < \l$.}
In this case, $\|x^-\|_2 = \sqrt{1-\|x^+\|_2^2} \ge 1/2$.
Thus
\begin{align*}
\|Ax\|_2
  &\ge \|Gx\|_2 \ge \|Gx^-\|_2 - \|Gx^+\|_2 \\
  &\ge \|Gx^-\|_2 - \|G\| \cdot \|x^+\|_2
  \ge s_G \cdot \frac{1}{2} - \|G\| \cdot \l.
\end{align*}
Using that $\|G\| \le \|A\|$, we conclude that
$$
s_A = \inf_{x \in S^{n-1}} \|Ax\|_2 \ge \min \Big( s_B \cdot \l, \; s_G \cdot \frac{1}{2} - \|A\| \cdot \l \Big).
$$
Optimizing the parameter $\l$, we conclude that
$$
s_A \ge \frac{s_B s_G}{2 (s_B + \|A\|)}.
$$
Using that $s_B$ bounded by $\|A\|$, we complete the proof.
\end{proof}

\subsubsection{Proof of the Invertibility Theorem~\ref{thm: invertibility continuous}} \label{s: proof of invertibility continuous}

We apply Lemma~\ref{lem: A decomposition} for the matrix $\bar{A}$ and the
decomposition \eqref{eq: A decomposed} and obtain
$$
s_B s_G \le 4 \|\bar{A}\| s_{\bar{A}}.
$$
Since $\bar{A}$ is a sub-matrix of $A$, we have $\|\bar{A}\| \le M \sqrt{n}$ on the event $\BB_{A,M}$.
Further, \eqref{eq: B bounded below} yields the bound $s_B \ge c \tau \e \sqrt{n}$.
It follows that
\begin{align}
\Pr{s_{\bar{A}} < t \sqrt{n} \text{ and } \BB_{A,M}}
&\le \Pr{s_G < \frac{4Mt}{c \tau \e} \cdot \sqrt{n} \text{ and } \BB_{A,M}} \nonumber\\
&\le \Pr{s_G < \frac{4Mt}{c \tau \e^{3/2}} \cdot \sqrt{\e n} \text{ and } \DD_{E^-} \text{ and }\BB_{A,M}}
  + \P((\DD_{E^-})^c).	\label{eq: sA bar}
\end{align}
The last line prepared us for an application of Lemma~\ref{lem: G bounded below}.
Using this lemma along with the trivial inclusion $\BB_{A,M} \subseteq \BB_{G,M}$ and
the estimate \eqref{eq: E- large} of the probability of $(\DD_{E^-})^c$,
we bound the quantity in \eqref{eq: sA bar} by
$$
\left[ C K M^{0.1} \e^{-0.05} \Big( \frac{4Mt}{c \tau \e^{3/2}} \Big)^{0.9} \right]^{\e n} + (C_2 K \tau)^{\e n}.
$$
This bound holds for all $\tau, t > 0$. Choosing $\tau = \sqrt{t}$ and rearranging the terms,
we obtain
$$
\Pr{s_{\bar{A}} < t \sqrt{n} \text{ and } \BB_{A,M}}
\le \left[C K M \e^{-1.4} t^{0.45} \right]^{\e n} + (C K t^{0.5})^{\e n}.
$$
This implies the desired conclusion \eqref{eq: invertibility continuous simplified}.
Theorem~\ref{thm: invertibility continuous} is proved.
\qed

\section{Invertibility for general distributions: statement of the result}	\label{s: invertibility general}

We are now passing to random matrices whose entries may have general, possibly discrete distributions;
our goal being Delocalization Theorem~\ref{thm: delocalization general}.
Recall that in Section~\ref{s: reduction intro} we described on the informal level
how delocalization can be reduced to invertibility of random matrices;
Proposition~\ref{prop: reduction} formalizes this reduction.
This prepares us to state an invertibility result for general random matrices,
whose proof will occupy the rest of this paper.

\begin{theorem}[Invertibility: general distributions]			\label{thm: invertibility general}
  Let $A$ be an $n \times n$ random matrix satisfying the assumptions of
  Theorem~\ref{thm: delocalization general}.
  Let $M \ge 1$, $\e \in (1/n,c)$, and let $I \subset [n]$ be any fixed subset with $|I| = \e n$.
  Then for any
  \begin{equation}   \label{eq: t large}
    t \ge \frac{c}{\e n} + e^{-c/\sqrt{\e}},
  \end{equation}
   we have
  $$
  \Pr{\smin(A_{I^c}) \le t \sqrt{n} \text{ and } \BB_{A,M}}
  \le \left( Ct^{0.4} \e^{-1.4} \right)^{\e n/2}.
  $$
\end{theorem}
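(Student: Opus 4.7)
The plan is to follow the blueprint of the continuous case (Section~\ref{s: continuous}), modifying only the two places where that argument relied essentially on continuity of the entry distribution. Exactly as in Section~\ref{s: decomposition of matrix}, I would replace $\e$ by $2\e$ for bookkeeping and decompose $\bar A := A_{I^c}$ into an $(1+\e)n \times n$ top block $B$ and an $\e n \times n$ bottom block $G$; by Assumption~\ref{A}, $B$ and $G$ are independent, and $G$ has all entries independent. The goal is then to produce a random orthogonal decomposition $\C^n = E^- \oplus E^+$ with $\dim_{\C} E^- \le c\e n$ such that $B$ is well bounded below on $E^+$ and $G$ is well bounded below on $E^-$, and to glue the two bounds by the decomposition Lemma~\ref{lem: A decomposition}.

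The first and principal obstacle is the replacement of Lemma~\ref{lem: distance}. In the continuous setting the distance $\dist(B_j', H_j')$ was estimated directly via a density bound on the projection of $B_j'$ onto the $\e n$-dimensional random subspace $(H_j')^\perp$. For general, possibly discrete, distributions no such density exists, and the distance must instead be controlled by the arithmetic structure of $(H_j')^\perp$. I would therefore invoke the distance estimate promised by the later Sections~\ref{s: LCD and SBP}--\ref{s: distance proof} (essentially Theorem~\ref{thm: distance general}), whose qualitative form is
\[
  \Pr{\dist(B_j', H_j') < \tau \sqrt{\e n}} \le (C\tau)^{\e n} + e^{-c\sqrt{\e}\, n}, \qquad \tau \ge c/(\e n).
\]
Its proof requires the transfer from $\C$ to $\R$, the stratification of the complex sphere by real--imaginary correlation, a modified LCD adapted to the extracted rectangular block of independent entries, and the careful net bounds on LCD level sets outlined in Section~\ref{s: outline}. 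Once such a bound is available, the negative second moment identity \eqref{eq: second negative moment} together with the weak-$L^p$ inequality from Section~\ref{s: dist into smi} yields a likely event on which $\sum_j s_j(B)^{-2} \lesssim 1/(\tau^2 \e)$, and Markov's inequality extracts $E^+$ on which $B$ is bounded below by $\gtrsim \tau \e \sqrt{n}$ exactly as in Section~\ref{s: E+}. The additive tail $e^{-c\sqrt{\e}\, n}$ is what ultimately forces the admissible range $t \ge e^{-c/\sqrt{\e}}$.

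The second step, bounding $G$ on $E^-$, proceeds along the lines of Section~\ref{s: G below on E-} with Assumption~\ref{A: general distribution} replacing the density hypothesis. For a fixed unit vector $x \in S^{n-1}_{\C}$ with real--imaginary decomposition $x = u + iv$, at least one of $u,v$ has norm $\ge 1/2$; projecting $\ip{G_j}{x}$ onto its real part expresses it as a sum $\sum_k c_k \xi_k$ with $\sum_k c_k^2 \ge 1/2$, and the Kolmogorov--Rogozin inequality, combined with $\sup_u \Pr{|\xi - u| \le 1} \le 1-p$, yields $\Pr{|\ip{G_j}{x}| \le \theta} \le C(\theta + 1/\sqrt{n})$. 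Tensorization via Lemma~\ref{lem: tensorization} gives the fixed-vector bound $\Pr{\|Gx\|_2 \le \theta \sqrt{\e n}} \le [C(\theta + 1/\sqrt{n})]^{\e n}$, and a $\d$-net on the real form of $E^-$ of real dimension $\le 2c\e n$ (essentially identical to Lemmas~\ref{lem: G bounded below on E}--\ref{lem: G bounded below}) extends this to a uniform lower bound over $S_{E^-}$ on the event $\BB_{A,M}$. The additive $1/\sqrt{n}$ in the small-ball estimate is the origin of the second restriction $t \ge c/(\e n)$ in the theorem.

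Combining $s_B \gtrsim \tau \e \sqrt{n}$ with the lower bound on $s_G$ via Lemma~\ref{lem: A decomposition}, setting $\tau = \sqrt{t}$, and optimizing the numerical exponents exactly as in Section~\ref{s: proof of invertibility continuous} produces the claimed $(C t^{0.4} \e^{-1.4})^{\e n/2}$ bound, with the distance-theorem tail $e^{-c\sqrt{\e}\, n}$ dominated by the main term precisely when $t \ge e^{-c/\sqrt{\e}}$. The genuinely hard part is therefore the general-distribution distance bound and the LCD machinery built to support it; the $G$-step and the final combining step are routine adaptations of their continuous counterparts.
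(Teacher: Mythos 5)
Your overall strategy matches the paper's: decompose $A_{I^c}$ into blocks $B$ and $G$, control $B$ on a large subspace $E^+$ via the negative second moment identity and Theorem~\ref{thm: distance general}, control $G$ on the small complementary subspace $E^-$ by a small-ball-probability-plus-net argument, and glue with Lemma~\ref{lem: A decomposition}. The distance and combining steps you describe are essentially correct.

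There is, however, a genuine gap in your treatment of the $G$-step. You claim that for an \emph{arbitrary} fixed unit vector $x\in S_\C^{n-1}$, one has $\Pr{|\ip{G_j}{x}|\le\theta}\le C(\theta+1/\sqrt{n})$ by Kolmogorov--Rogozin. This is false. For a compressible vector the bound breaks down: take $x=(e_1+e_2)/\sqrt2$ with $\xi$ Bernoulli, then $\ip{G_j}{x}=0$ with probability $1/2$, so $\Pr{|\ip{G_j}{x}|\le\theta}\ge 1/2$ for all $\theta\ge 0$, which cannot be $O(\theta+1/\sqrt n)$. The bound $C(\theta+1/\sqrt n)$ requires the coefficient vector to be spread out, e.g.\ $\|x\|_\infty\lesssim 1/\sqrt n$, which the paper obtains precisely from the hypothesis $x\in\Incomp$ via Markov's inequality and Proposition~\ref{prop: LCD lower simple} (this is the content of Lemma~\ref{lem: fixed row and vector general}). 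This in turn forces an additional, nontrivial step that you omit entirely: one must show that the \emph{random} subspace $E^-$ actually consists of incompressible vectors. The paper does this in Lemma~\ref{lem: E- Incomp}, comparing the defining bound $\|Bz\|_2\le c\tau\e\sqrt n$ on $S_{E^-}$ against the uniform lower bound $\|Bz\|_2>c\sqrt n$ on $\Comp$ from Proposition~\ref{prop: invertibility on Comp}. Without this step, the union bound over a net of $S_{E^-}$ is not available (the net vectors are not known to be incompressible), and the conclusion fails. This is not a routine adaptation of the continuous case, where the density assumption gives the fixed-row bound for every unit vector and no incompressibility is needed.

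A smaller inaccuracy: you attribute the restriction $t\ge c/(\e n)$ to the additive $1/\sqrt n$ in the $G$-step. In the paper's proof the dominant source is actually the $1/\sqrt{\e n}$ term in Theorem~\ref{thm: distance general}, which feeds into $\tau_0$ and, after setting $\tau=\sqrt t$, produces $t\gtrsim 1/(\e n)$; the $G$-step contributes only the secondary constraint $\theta\ge 1/\sqrt n$, which is absorbed once one notes $\e\ge n^{-0.4}$ (as the paper remarks at the start of its proof, this may be assumed WLOG because otherwise the claimed bound is vacuous).
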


The constant $C$ in the inequality above depends on $M$ and the parameters $p$ and $K$ appearing in Assumption~\ref{A: general distribution}.

Delocalization Theorem~\ref{thm: delocalization general} follows from Theorem~\ref{thm: invertibility general} along the same lines as in Section~\ref{s: delocalization continuous}. As in that section, for a given $s$ we set $t=8M(\e s)^6$, which leads to the particular form of the restriction on $s$ in Theorem~\ref{thm: delocalization general}.
 This restriction, as well as the probability estimate, can be improved by tweaking various parameters throughout the proof of Theorem~\ref{thm: invertibility general}.
 They can be further and more significantly improved by taking into account the arithmetic structure in the small ball probability estimate, instead of disregarding it in Section \ref{s: SBP via correlations}.
We refrained from pursuing these improvements in order to avoid overburdening the paper with technical calculations.

\section{Small ball probabilities via least common denominator}     \label{s: LCD and SBP}

In this section, which may have an independent interest, we relate the sums of independent
random variables and random vectors to the arithmetic structure of their coefficients.

To see the relevance of this topic to invertibility of random matrices, we could
try to extend the argument we gave Section~\ref{s: continuous} to general distributions. Most of the argument
would go through. However, a major difficulty occurs when we try to estimate the distance
between a random vector $X$ and a fixed subspace $H$. For discrete distributions, $\dist(X,H) = \|P_{H^\perp} X\|_2$
can no longer be bounded below as easily as we did in Lemma~\ref{lem: distance}.
The source of difficulty can be best seen if we consider the simple example where $H$ is the
hyperplane orthogonal to the vector $(1,1,0,0,\ldots,0)$ and $X$ is the random Bernoulli vector
(whose coefficients are independent and take values $1$ and $-1$ with probability each).
In this case, $\dist(X,H)$ with probability $1/2$. Even if we exclude zeros by making
$H$ orthogonal to $(1,1,1,1,\ldots,1)$, the distance would equal zero with probability $\sim 1/\sqrt{n}$,
thus polynomially rather than exponentially fast in $n$.

The problem with these examples is that $H^\perp$ had rigid arithmetic structure.
In Section~\ref{s: LCD}, we will show how to quantify arithmetic structure with a notion of
approximate least common denominator (LCD). In Section~\ref{s: SBP via LCD}
will also provide bounds on sums of independent random vectors in terms of LCD.
Finally, in Sections~\ref{s: special cases sums} and \ref{s: special cases projections}
we will specialize these bounds for
sums of independent random variables and projections of random vectors
(and in particular, for distances to subspaces).

\subsection{The least common denominator}		\label{s: LCD}

An approximate concept of least common denominator (LCD) was proposed in  \cite{RV square}
to quantify the arithmetic structure of vectors; this idea was developed in
\cite{RV rectangular, RV upper, V symmetric}, see also \cite{RV ICM}. Here we will use the
version of LCD from \cite{V symmetric}.
We emphasize that throughout this  section we consider \emph{real} vectors and matrices.

\begin{definition}[Least common denominator]			\label{def: LCD}
  Fix $L > 0$. For a {\em vector} $v \in \R^N$, the least common denominator (LCD) is defined as
  $$
  D(v) = D(v,L) = \inf \left\{ \theta>0: \; \dist(\theta v, \Z^N) < L \sqrt{\log_+ \frac{\|\theta v\|_2}{L}} \right\}.
  $$
  For a {\em matrix} $V \in \R^{m \times N}$, the least common denominator is defined as
  $$
  D(V)= D(V,L) = \inf \left\{ \|\theta\|_2: \; \theta \in \R^m, \;
    \dist(V^\tran \theta, \Z^N) < L \sqrt{\log_+ \frac{\|V^\tran \theta\|_2}{L}} \right\}.
  $$
\end{definition}

\begin{remark}
  The definition of LCD for vectors is a special case of the definition for matrices with $m=1$.
  This can be seen by considering a vector $v \in \R^N$ as a $1 \times N$ matrix.
\end{remark}

\begin{remark}
  In applications, we will typically choose $L \sim \sqrt{m}$, so for vectors we usually choose $L \sim 1$.
\end{remark}

Before relating the concept LCD to small ball probabilities, let us pause to note a simple but useful
lower bound for LCD.
To state it, for a given matrix $V$ we let $\|V\|_\infty$ denote the maximum Euclidean norm
of the columns of $V$.
Note that for vectors ($1 \times N$ matrices), this quantity is the usual $\ell_\infty$ norm.

\begin{proposition}[Simple lower bound for LCD]		\label{prop: LCD lower simple}
  For every matrix $V$ and $L > 0$, one has
  $$
  D(V,L) \ge \frac{1}{2\|V\|_\infty}.
  $$
\end{proposition}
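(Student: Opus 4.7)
The plan is to argue by contradiction: suppose some $\theta \in \R^m$ achieves the defining inequality for $D(V,L)$ with $\|\theta\|_2 < 1/(2\|V\|_\infty)$, and derive an impossibility. The point is that when $\theta$ is this small, the vector $V^\tran\theta$ has all coordinates strictly less than $1/2$ in absolute value, so the nearest lattice point in $\Z^N$ is the origin. The distance to $\Z^N$ therefore equals the full Euclidean norm, and comparing this with the right-hand side in the definition yields an elementary inequality of the form $x^2 < \log_+ x$ that has no positive solutions.

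First I would bound $\|V^\tran\theta\|_\infty$ in terms of $\|\theta\|_2$. Writing the columns of $V$ as $v_1,\dots,v_N$, each coordinate of $V^\tran\theta$ equals the inner product $\langle v_j,\theta\rangle$, and Cauchy--Schwarz gives $|\langle v_j,\theta\rangle| \le \|v_j\|_2\,\|\theta\|_2 \le \|V\|_\infty\,\|\theta\|_2$. Assume toward contradiction that $\|\theta\|_2 < 1/(2\|V\|_\infty)$; then $\|V^\tran\theta\|_\infty < 1/2$, which forces the closest point of $\Z^N$ to $V^\tran\theta$ to be $0$, and hence $\dist(V^\tran\theta,\Z^N) = \|V^\tran\theta\|_2$.

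Next I would plug this into the defining inequality for LCD to obtain
\[
  \|V^\tran\theta\|_2 \;<\; L\sqrt{\log_+ \tfrac{\|V^\tran\theta\|_2}{L}}.
\]
Dividing by $L$ and setting $x := \|V^\tran\theta\|_2/L \ge 0$ reduces this to $x^2 < \log_+ x$. For $0 \le x \le 1$ the right-hand side is $0$, so the inequality fails unless $x<0$; for $x>1$ the inequality becomes $x^2 < \log x$, which is false because $\log x < x < x^2$ on $(1,\infty)$. This contradiction shows no admissible $\theta$ can have $\|\theta\|_2 < 1/(2\|V\|_\infty)$, proving the claimed lower bound on $D(V,L)$.

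There is no real obstacle here: once one notices that a small $\|\theta\|_2$ forces the lattice distance to collapse to the Euclidean norm, the rest is an elementary inequality. The only subtlety is the definitional one of handling the degenerate case $V^\tran\theta = 0$ (where $L\sqrt{\log_+ 0} = 0$ and the strict inequality in the LCD definition cannot hold), which I would mention in one line to confirm such $\theta$ is not admissible.
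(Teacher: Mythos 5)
Your proof is correct and follows essentially the same route as the paper's: Cauchy--Schwarz forces $\|V^\tran\theta\|_\infty < 1/2$, so the lattice distance collapses to the Euclidean norm, and the definition then reduces to the impossible scalar inequality $x < \sqrt{\log_+ x}$. Your extra remark about the degenerate case $V^\tran\theta = 0$ is a harmless addition; otherwise the two arguments match.
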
	

\begin{proof}
By definition of LCD, it is enough to show that for $\theta \in \R^m$, the inequality
\begin{equation}         \label{eq: theta as in LCD}
\dist(V^\tran \theta, \Z^N) < L \sqrt{\log_+ \frac{\|V^\tran \theta\|_2}{L}}
\end{equation}
implies $\|\theta\|_2 \ge 1/(2\|V\|_\infty)$. Assume the contrary, that there exists $\theta$
which satisfies \eqref{eq: theta as in LCD} but for which
\begin{equation}         \label{eq: theta too small}
\|\theta\|_2 < \frac{1}{2\|V\|_\infty}.
\end{equation}
We can use Cauchy-Schwartz inequality and \eqref{eq: theta too small}
to bound all coordinates $\ip{V_j}{\theta}$ of the vector $V^\tran \theta$ as follows:
$$
|\ip{V_j}{\theta}| \le \|V\|_\infty \|\theta\|_2 < \frac{1}{2}, \quad j =1,\ldots,N.
$$
(Here $V_j \in \R^m$ denote the columns of the matrix $V$.)
This bound means that each coordinate of $V^\tran \theta$ is closer to zero
than to any other integer. Thus the vector $V^\tran \theta$ itself is closer (in the $\ell_2$ norm)
to the origin than to any other integer vector in $\Z^N$. This implies that
$$
\dist(V^\tran \theta, \Z^N) = \|V^\tran \theta\|_2.
$$
Substituting this into \eqref{eq: theta as in LCD} and dividing both sides by $L$,
we obtain
$$
u \le \sqrt{\log_+ u} \quad \text{where} \quad u = \|V^\tran \theta\|_2/L.
$$
But this inequality has no solutions for $u>0$.
This contradiction completes the proof.
\end{proof}

\subsection{Small ball probabilities via LCD}			\label{s: SBP via LCD}

The following theorem relates small ball probabilities to arithmetic structure, which is
measured by LCD. It is a general version of results from \cite{RV square, RV rectangular, V symmetric}.

\begin{theorem}[Small ball probabilities via LCD]			\label{thm: SBP LCD}
  Consider a random vector $\xi = (\xi_1,\ldots,\xi_N)$, where $\xi_k$ are i.i.d. copies
  of a real-valued random variable $\xi$ satisfying \eqref{eq: xi}.
  Consider a matrix $V \in \R^{m \times N}$. Then for every $L \ge \sqrt{8m/p}$
  we have
  \begin{equation}							\label{eq: SBP LCD}
  \LL(V\xi, t \sqrt{m}) \le \frac{(CL/\sqrt{m})^m}{\det(VV^\tran)^{1/2}}
    \Big( t + \frac{\sqrt{m}}{D(V)} \Big)^m, \quad t \ge 0.
  \end{equation}
\end{theorem}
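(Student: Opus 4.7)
The plan is to prove this via the classical Esseen--Halász approach: convert the small ball probability into an integral of the characteristic function of $V\xi$, factor the characteristic function into one-dimensional pieces, bound each piece by a Gaussian-type decay whose ``distance from $\Z$'' structure is precisely what $D(V)$ is designed to control, and integrate.

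First I would apply the multidimensional Esseen inequality: for any random vector $Z\in\R^m$ and any $r>0$,
\[
\LL(Z,r)\;\le\;(Cr)^m\int_{B(0,C/r)}|\phi_Z(\theta)|\,d\theta.
\]
Setting $Z=V\xi$ and $r=t\sqrt{m}$, and noting that independence of the $\xi_k$ gives
\[
\phi_{V\xi}(\theta)=\E\exp(i\langle V^\tran\theta,\xi\rangle)=\prod_{k=1}^N\phi_\xi\bigl((V^\tran\theta)_k\bigr),
\]
reduces the problem to estimating an integral of $\prod_k|\phi_\xi((V^\tran\theta)_k)|$ over a Euclidean ball in $\R^m$.

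Next I would derive the pointwise Fourier bound that makes the LCD relevant. Symmetrizing, $|\phi_\xi(s)|^2=\E\cos\bigl(s(\xi-\xi')\bigr)\le\exp\bigl(-\E[1-\cos(s(\xi-\xi'))]\bigr)$. Combining $1-\cos x\gtrsim\|x/(2\pi)\|_{\R/\Z}^2$ with the anti-concentration statement \eqref{eq: xi} on $\xi-\xi'$ (which, being a symmetric difference of i.i.d.\ copies, inherits scale-$1$ spread with probability $\ge p$ and is bounded with probability $\ge 1-p$ on a $K$-interval), one extracts
\[
|\phi_\xi(s)|\;\le\;\exp\!\left(-\tfrac{p}{4}\,\dist(s,\Z)^2\right).
\]
Taking products over coordinates yields
\[
|\phi_{V\xi}(\theta)|\;\le\;\exp\!\left(-\tfrac{p}{4}\,\dist(V^\tran\theta,\Z^N)^2\right),
\]
so the Esseen bound becomes an integral of $\exp(-\tfrac{p}{4}\dist(V^\tran\theta,\Z^N)^2)$ over a ball.

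Then I would change variables from $\theta\in\R^m$ to $y=V^\tran\theta$, which embeds the ball into the row space of $V$ with Jacobian $\det(VV^\tran)^{1/2}$; this is the source of the $\det(VV^\tran)^{-1/2}$ factor in the conclusion. To estimate the resulting integral I would split the integration region according to the threshold $\|\theta\|_2<D(V)$ versus $\|\theta\|_2\ge D(V)$. On the first piece the defining property of $D(V,L)$ forces $\dist(V^\tran\theta,\Z^N)\ge L\sqrt{\log_+(\|V^\tran\theta\|_2/L)}$, so $\exp(-\tfrac{p}{4}\dist^2)\le (L/\|V^\tran\theta\|_2)^{pL^2/4}$; the choice $L\ge\sqrt{8m/p}$ makes the exponent $\ge 2m$, which suffices to absorb the polynomial volume growth. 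On the complementary piece $\|\theta\|_2\ge D(V)$ I would use only $|\phi_\xi|\le 1$ and compute the volume of a ball of radius $C/(t\sqrt{m})\wedge(\text{something})$ minus a ball of radius $D(V)$, producing the $\sqrt{m}/D(V)$ contribution. Adding the two regimes gives the additive form $\bigl(t+\sqrt{m}/D(V)\bigr)^m$ after matching with the Esseen prefactor $(Ct\sqrt{m})^m$ and the Jacobian.

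The main obstacle will be the integral estimate in the split-region step, specifically verifying that the logarithmic correction inside the definition of $D(V,L)$ exactly calibrates the Gaussian decay $\exp(-\tfrac{p}{4}\dist^2)$ against the volume of the integration region, producing a constant $(CL/\sqrt{m})^m$ independent of $V$. This is where the threshold $L\ge\sqrt{8m/p}$ enters, and it is the only place where the precise shape of the LCD (rather than some cruder arithmetic quantity) is used. The remaining bookkeeping, including the trivial bound via Proposition~\ref{prop: LCD lower simple} when $D(V)$ is extremely small, is routine.
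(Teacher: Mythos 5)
The overall plan — Esseen's inequality, factoring the characteristic function, symmetrization, an integral estimate tied to the LCD, and a change of variables producing the $\det(VV^\tran)^{-1/2}$ factor — matches the paper's strategy. However, there are two genuine gaps.

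The central one is the claimed Fourier bound $|\phi_\xi(s)|\le\exp(-\tfrac{p}{4}\dist(s,\Z)^2)$. This inequality is false. After symmetrization one gets
\[
|\phi_\xi(s)|\le\exp\Big(-\tfrac12\E\big[1-\cos(2\pi s\bar\xi)\big]\Big)
\le\exp\Big(-c\,p\,\bar\E\,\dist(s\bar\xi,\Z)^2\Big),
\]
where $\bar\xi=\xi-\xi'$ is conditioned on $1\le|\bar\xi|\le K$, and the multiplicative factor $\bar\xi$ \emph{cannot} be dropped: $\dist(s\bar\xi,\Z)$ is not comparable to $\dist(s,\Z)$ (e.g. if $\bar\xi=2$ and $s=\tfrac12$, then $\dist(s\bar\xi,\Z)=0$ while $\dist(s,\Z)=\tfrac12$). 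Consequently the integrand you end up with is $\exp\big(-cp\,\bar\E\,\dist(\bar\xi\,V^\tran\theta,\Z^N)^2\big)$, not $\exp\big(-cp\,\dist(V^\tran\theta,\Z^N)^2\big)$. The paper's proof carries the random factor $\bar\xi_1$ into the LCD condition: the lower bound on the LCD is invoked for the vector $t^{-1}\bar\xi_1\theta$, where $|\bar\xi_1|\le K$ ensures that this vector still has norm below $D(V)$, and $|\bar\xi_1|\ge 1$ ensures the $\log_+$ factor in the LCD definition is at least $\log_+\|V^\tran\theta\|_2/(Lt)$. Your outline omits the $\bar\xi$ factor and this is exactly where the hypothesis $\Pr\{|\xi-u|\le 1\}\le 1-p$, $\Pr\{|\xi|>K\}\le p/2$ gets used. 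Without tracking it, the argument does not close.

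A smaller but related issue is your treatment of the range of $t$. You propose to split the Esseen integration ball $B(0,C/(t\sqrt m))$ into $\|\theta\|_2<D(V)$ and $\|\theta\|_2\ge D(V)$ and bound the second piece by its volume. But on the second piece $|\phi_\xi|\le 1$ is all you have, and the resulting volume bound produces $(C/\sqrt m)^m$ with no $D(V)$-dependence and no determinant factor, so it does not obviously fall under the claimed right-hand side. The paper avoids this by first proving the bound for $t\ge t_0:=2K\sqrt m/D(V)$ (where the \emph{entire} Esseen ball lies inside the LCD regime, once rescaled by $\bar\xi_1$), and then extending to smaller $t$ by the monotonicity $\LL(V\xi,t\sqrt m)\le\LL(V\xi,t_0\sqrt m)$. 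That monotonicity step, not an annulus estimate, is what produces the additive term $\sqrt m/D(V)$.
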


\begin{proof}
We shall apply Esseen's inequality for the small ball probabilities of a general
random vector $Y \in \R^m$. It states that
\begin{equation}							\label{eq: Esseen}
\LL(Y, \sqrt{m}) \le C^m \int_{B(0,\sqrt{m})} |\phi_Y(\theta)| \, d\theta
\end{equation}
where $\phi_Y(\theta) = \E \exp(2 \pi i \ip{\theta}{Y})$ is the characteristic function of $Y$
and $B(0,\sqrt{m})$ is the Euclidean ball centered at the origin and with radius $\sqrt{m}$.

Let us apply Esseen's inequality for $Y = t^{-1}V\xi$, assuming without loss of generality
that $t>0$.
Denoting the columns of $V$ by $V_k$, we express
$$
\ip{\theta}{Y} = \sum_{k=1}^N t^{-1} \ip{\theta}{V_k} \xi_k.
$$
By independence of $\xi$, this yields
$$
\phi_Y(\theta) = \prod_{k=1}^N \phi_k(t^{-1}\ip{\theta}{V_k}),
\quad \text{where} \quad
\phi_k(\tau) = \E \exp(2 \pi i \tau \xi_k)
$$
are the characteristic functions of $\xi_k$.
Therefore, Esseen's inequality \eqref{eq: Esseen} yields
\begin{equation}							\label{eq: LL via phik}
\LL(V\xi, t \sqrt{m}) = \LL(Y, \sqrt{m})
\le C^m \int_{B(0,\sqrt{m})} \prod_{k=1}^N \big| \phi_k(t^{-1}\ip{\theta}{V_k}) \big| \, d\theta.
\end{equation}

Now we evaluate the characteristic functions that appear in this integral.
First we apply a standard symmetrization argument. Let $\xi'$ be an independent
copy of $\xi$, and consider the random vector $\bar{\xi} := \xi - \xi'$.
Its coordinates $\bar{\xi}_k$ are i.i.d. random variables with symmetric distribution.
It follows that
$$
|\phi_k(\tau)|^2 = \E \exp(2 \pi i \tau \bar{\xi}_1) = \E \cos(2 \pi \tau \bar{\xi}_1) \quad \text{for all $k$}.
$$
Using the inequality $x \le \exp(-\frac{1}{2}(1-x^2))$
that is valid for all $x \ge 0$, we obtain
\begin{equation}							\label{eq: phik}
|\phi_k(\tau)| \le \exp \Big[ -\frac{1}{2} \E \big[ 1 - \cos(2 \pi \tau \bar{\xi}_1) \big] \Big].
\end{equation}
The assumptions on $\xi_k$ imply that the event $\{ 1 \le |\bar{\xi}_1| \le K\}$
holds with probability at least $p/2$.
Denoting by $\bar{\E}$ the conditional expectation on that event, we obtain
\begin{equation}         \label{eq: E 1-cos}
\E \big[ 1 - \cos(2 \pi \tau \bar{\xi}_1) \big]
\ge \frac{p}{2} \bar{\E} \big[ 1 - \cos(2 \pi \tau \bar{\xi}_1) \big]
\ge \frac{p}{2} \cdot \bar{\E} \min_{q \in \Z} |\tau \bar{\xi}_1 - q|^2.
\end{equation}

Substituting this inequality into \eqref{eq: phik} and then back into \eqref{eq: LL via phik},
we further derive
\begin{align}					\label{eq: integral of f}
\LL(V\xi, t \sqrt{m})
&\le C^m \int_{B(0,\sqrt{m})} \exp \Big[ - \frac{p}{4} \bar{\E} \sum_{k=1}^N
  \min_{q_k \in \Z} |t^{-1} \bar{\xi}_1 \ip{\theta}{V_k} - q_k|^2 \Big] \, d\theta \nonumber\\
&= C^m \int_{B(0,\sqrt{m})} \exp \big( -\frac{p}{4} f(\theta)^2 \big) \, d\theta
\end{align}
where
$$
f(\theta)^2 := \bar{\E} \min_{q \in \Z^N} \Big\| t^{-1} \bar{\xi}_1 V^\tran \theta - q \Big\|_2^2
= \bar{\E} \dist( t^{-1} \bar{\xi}_1 V^\tran \theta, \Z^N )^2.
$$

The least common denominator $D(V,L)$ will help us
estimate the distance to the integer lattice that appears in the definition of $f(\theta)$.
Let us first assume that
\begin{equation}							\label{eq: epsilon large}
t \ge t_0 := \frac{2 K \sqrt{m}}{D(V,L)},
\end{equation}
or equivalently that $D(V,L) \ge 2 K \sqrt{m}/t$.
Then for any $\theta$ appearing in the integral \eqref{eq: integral of f},
that is for $\theta \in B(0,\sqrt{m})$, one has
$$
\| t^{-1} \bar{\xi}_1 \theta \|_2 \le K t^{-1} \sqrt{m} < D(V).
$$
(Here we used that $|\bar{\xi}_1| \le K$ holds on the event over which the conditional
expectation $\bar{\E}$ is taken.)
By the definition of $D(V)$, this implies that
$$
\dist(V^\tran(t^{-1} \bar{\xi}_1 \theta), \Z^N)
\ge L \sqrt{\log_+ \frac{\|V^\tran(t^{-1} \bar{\xi}_1 \theta)\|_2}{L}}.
$$
Recalling the definition of $f$ and using that $|\bar{\xi}_1| \ge 1$ on the event over which the conditional
expectation $\bar{\E}$ is taken, we obtain
$$
f(\theta)^2 \ge L^2 \log_+ \frac{\|V^\tran \theta\|_2}{Lt}.
$$
where in the second inequality we use that $|\bar{\xi}_1| \ge 1$ on the event over
which the conditional expectation $\bar{\E}$ is taken.
Substituting this bound into \eqref{eq: integral of f}, we obtain
$$
\LL(V\xi, t \sqrt{m})
\le C^m \int_{B(0,\sqrt{m})} \exp \Big( -\frac{pL^2}{4} \log_+ \frac{\|V^\tran \theta\|_2}{Lt} \Big) \, d\theta.
$$
One can estimate this integral in a standard way.

Let us get rid of $V$ in the integrand by an appropriate change of variable.
Using a singular value decomposition of $V$, one can replace $V^\tran \theta$ by $\Sigma \theta$
where $\Sigma \in \R^{m \times m}$ is a diagonal matrix with singular values of $V$ on the diagonal.
Next, we change variables to $\Sigma \theta / Lt = z$. Since $\det \Sigma = \det(V V^\tran)^{1/2}$,
this yields
\begin{equation}         \label{eq: SBP via exponential integral}
\LL(V\xi, t \sqrt{m}) \le \frac{(CLt)^m}{\det(VV^\tran)^{1/2}}
\int_{\R^m} \exp \Big( -\frac{pL^2}{4} \log_+ \|z\|_2 \Big) \, dz.
\end{equation}
We evaluate the integral by breaking it into two parts:
\begin{equation}         \label{eq: integral broken}
\int_{\R^m} \exp \Big( -\frac{pL^2}{4} \log_+ \|z\|_2 \Big) \, dz
= \int_{B(0,1)} 1 \, dz + \int_{B(0,1)^c} \|z\|_2^{-pL^2/4} \, dz.
\end{equation}
Let us start with the second integral. Passing to the polar coordinates
$(r,\phi) \in \R_+ \times S^{m-1}$ where $dz = r^{m-1} \, dr \, d\phi$,
we obtain for any $q \ge 0$ that
$$
\int_{B(0,1)^c} \|z\|_2^{-q} \, dz
= \int_1^\infty dr \int_{S^{m-1}} r^{-q} r^{m-1} \, d\phi
= \s_{m-1}(S^{m-1}) \int_1^\infty r^{m-q-1} \, dr,
$$
where $\s_{m-1}(S^{m-1})$ is the surface area of the unit sphere.
Recall that
$$
\s_{m-1}(S^{m-1}) = \frac{2 \pi^{m/2}}{\Gamma(m/2)} \le \Big(\frac{C}{\sqrt{m}}\Big)^m
$$
and that
$$
\int_1^\infty r^{m-q-1} \, dr = \frac{1}{q-m} \le 1
\quad \text{for} \quad q \ge 2m.
$$
This yields
$$
\int_{B(0,1)^c} \|z\|_2^{-q} \, dz \le \Big(\frac{C}{\sqrt{m}}\Big)^m
\quad \text{for} \quad q \ge 2m.
$$
We use this bound for $q = pL^2/4$, where $q \ge 2m$ by assumption.
It follows that the integral over $B(0,1)^c$ in \eqref{eq: integral broken}
is bounded by $(C/\sqrt{m})^m$. Moreover, the integral over $B(0,1)$
equals the volume of the unit ball $B(0,1)$, which is also bounded by $(C/\sqrt{m})^m$.
Thus the right hand side of \eqref{eq: integral broken} is bounded by $2(C/\sqrt{m})^m$.
Substituting it into \eqref{eq: SBP via exponential integral}, we obtain
\begin{equation}         \label{eq: SBP for large t}
\LL(V\xi, t \sqrt{m}) \le \frac{2(CLt)^m}{\det(VV^\tran)^{1/2}} \Big(\frac{C}{\sqrt{m}}\Big)^m.
\end{equation}
This completes the proof in the case where $t \ge t_0$ as specified in \eqref{eq: epsilon large}.

In the opposite case where $t \le t_0$, it is enough to use that
$\LL(V\xi, t \sqrt{m}) \le \LL(V\xi, t_0 \sqrt{m})$ and apply the inequality \eqref{eq: SBP for large t}
for $t_0$. This completes the proof of Theorem~\ref{thm: SBP LCD}.
\end{proof}

\subsection{Special cases: sums of independent random variables}				\label{s: special cases sums}

Let us state an immediate consequence of Theorem~\ref{thm: SBP LCD} in the important special case
where $m=1$. In this case, $V\xi$ becomes a sum of independent random variables.

\begin{corollary}[Small ball probabilities for sums]		\label{cor: SBP sums}
  Let $\xi_k$ be i.i.d. copies of a real-valued random variable $\xi$ satisfying \eqref{eq: xi}.
  Let $a = (a_1,\ldots, a_N) \in \R^n$.
  Then for every $L \ge \sqrt{8/p}$ we have
  $$
  \LL \Big( \sum_{k=1}^N a_k \xi_k, t \Big)
  \le \frac{CL}{\|a\|_2} \Big( t + \frac{1}{D(a,L)} \Big), \quad t \ge 0.
  $$
\end{corollary}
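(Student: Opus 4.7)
The plan is straightforward: this is a direct specialization of Theorem~\ref{thm: SBP LCD} to the case $m = 1$, so I would simply apply that theorem with $V$ chosen to be the row vector $a^\tran \in \R^{1 \times N}$ and then read off the constants. The first thing I would verify is the compatibility of notation, namely that under the identification of a vector $a \in \R^N$ with a $1 \times N$ matrix, the matrix LCD $D(V,L)$ of Definition~\ref{def: LCD} coincides with the vector LCD $D(a,L)$. This is precisely the content of the first remark following the definition, so no work is required.

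Next, I would compute each factor appearing on the right-hand side of the bound \eqref{eq: SBP LCD} in the case $m=1$. With $V = a^\tran$ we have $V\xi = \sum_{k=1}^N a_k \xi_k$, so $\LL(V\xi, t\sqrt{m}) = \LL(\sum_k a_k \xi_k, t)$. The determinantal factor simplifies via
\[
\det(VV^\tran)^{1/2} = \big( \textstyle\sum_k a_k^2 \big)^{1/2} = \|a\|_2,
\]
the prefactor $(CL/\sqrt{m})^m$ collapses to $CL$, and the additive term $\sqrt{m}/D(V)$ becomes $1/D(a,L)$. The hypothesis $L \ge \sqrt{8m/p}$ reduces to $L \ge \sqrt{8/p}$, which is exactly the assumption in the corollary.

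Substituting these values into \eqref{eq: SBP LCD} raised to the power $m = 1$ yields
\[
\LL\Big( \sum_{k=1}^N a_k \xi_k, t \Big) \le \frac{CL}{\|a\|_2}\Big( t + \frac{1}{D(a,L)} \Big),
\]
which is the stated conclusion. There is no real obstacle in the argument — the work was already done in the proof of Theorem~\ref{thm: SBP LCD}; the only care needed is to confirm the trivial identification of matrix and vector LCDs for $1 \times N$ matrices and to verify that the hypothesis on $L$ reads off correctly when $m = 1$.
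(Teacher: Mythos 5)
Your proposal is correct and is exactly the derivation the paper intends: the paper explicitly introduces Corollary~\ref{cor: SBP sums} as ``an immediate consequence of Theorem~\ref{thm: SBP LCD} in the important special case where $m=1$,'' and your substitutions $V=a^\tran$, $\det(VV^\tran)^{1/2}=\|a\|_2$, $(CL/\sqrt{m})^m=CL$, $\sqrt{m}/D(V)=1/D(a,L)$, and $L\ge\sqrt{8/p}$ are precisely what that specialization requires. The identification of the vector and matrix LCDs for $1\times N$ matrices is indeed covered by the remark following Definition~\ref{def: LCD}, so nothing further is needed.
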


This corollary was proved in \cite{V symmetric}; similar versions appeared in \cite{RV square, RV rectangular}.
It gives a non-trivial probability bound when the coefficient vector $a$ is sufficiently unstructured, i.e. when
$D(a,L)$ is large enough. In the situation where no information is known about the structure of $a$,
the following result can be useful.

\begin{lemma}[Small ball probabilities: a simple bound]			\label{lem: SBP simple}
  Let $\xi_k$ be independent random variables satisfying \eqref{eq: xi},
  and let $a_j$ be real numbers such that $\sum_{j=1}^N a_j^2 = 1$.
  Then
  $$
  \LL \Big( \sum_{k=1}^N a_k \xi_k, c \Big) \le 1-c'
  $$
  where $c$ and $c'$ are positive numbers that may only depend on $p$ and $K$.
\end{lemma}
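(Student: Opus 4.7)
The plan is to dichotomize on the largest coefficient $\|a\|_\infty$ with a threshold $c_0$ that depends only on $p$ and $K$, and handle each regime with a different tool: single-variable anti-concentration in one case, and the LCD-based bound of Corollary~\ref{cor: SBP sums} in the other. Throughout, choose $L := \sqrt{8/p}$ (the minimal value at which Corollary~\ref{cor: SBP sums} is available), so all constants from the LCD machinery depend only on $p$.

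In the \emph{large-coefficient case}, suppose $|a_{k^*}| = \|a\|_\infty \ge c_0$ for some index $k^*$. Conditioning on the $\xi_j$ with $j \ne k^*$ and absorbing their (fixed) contribution into the vector $u$ in the definition of the concentration function (this is exactly the argument behind Lemma~\ref{lem: SBP restriction}), we reduce to a single variable:
$$
\LL\Big(\sum_{k=1}^N a_k \xi_k,\, c\Big) \;\le\; \LL(a_{k^*}\xi_{k^*},\, c) \;=\; \sup_{u \in \R} \Pr{|\xi_{k^*}-u|\le c/|a_{k^*}|}.
$$
Choosing $c \le c_0$ makes $c/|a_{k^*}| \le 1$, so the first inequality in \eqref{eq: xi} gives an upper bound of $1-p$.

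In the \emph{small-coefficient case}, $\|a\|_\infty \le c_0$. By Proposition~\ref{prop: LCD lower simple},
$$
D(a,L) \;\ge\; \frac{1}{2\|a\|_\infty} \;\ge\; \frac{1}{2 c_0}.
$$
Since $\|a\|_2 = 1$, Corollary~\ref{cor: SBP sums} applied with $t = c$ yields
$$
\LL\Big(\sum_{k=1}^N a_k \xi_k,\, c\Big) \;\le\; CL\bigl( c + 2 c_0 \bigr).
$$
Now choose the constants in both cases together: set $c_0 = c := 1/(6CL)$, which depends only on $p$. Then the small-coefficient bound is at most $1/2$, while the large-coefficient bound is at most $1-p$. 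Taking $c' := \min(p, 1/2)$ completes the proof, with $c$ and $c'$ depending only on $p$ (the parameter $K$ enters implicitly through the assumption \eqref{eq: xi} but not quantitatively here, since the threshold "$1$" in that assumption matches the threshold we use).

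There is essentially no obstacle to this argument; it is a clean case split. The only point requiring a moment's care is coordinating the choices of $c_0$ (the cutoff separating the two regimes) and $c$ (the argument of $\LL$) so that a single $c$ and a single $c'$ work in both cases, which is achieved by taking both comparable to $1/(CL)$.
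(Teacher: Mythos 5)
Your proof is correct and follows essentially the same path as the paper's: split on whether $\|a\|_\infty$ exceeds a threshold of order $1/(CL)$, then use the single-variable bound from \eqref{eq: xi} (via Lemma~\ref{lem: SBP restriction}) in one branch and Proposition~\ref{prop: LCD lower simple} combined with Corollary~\ref{cor: SBP sums} in the other, coordinating the threshold and the radius. One minor imprecision: you remark that the constants depend only on $p$ and that $K$ ``does not enter quantitatively,'' but the constant $C$ in Corollary~\ref{cor: SBP sums} does depend on $K$ (it absorbs $K$ from the cutoff $t_0 = 2K\sqrt{m}/D(V,L)$ in the proof of Theorem~\ref{thm: SBP LCD}); this is harmless since the lemma permits $c,c'$ to depend on $K$ as well, but the side remark is not accurate.
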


\begin{proof}
We will consider separately the cases where $a$ has a large coordinate and where it does not.
Assume first that
$$
\|a\|_{\infty} \ge \frac{1}{4CL} =: \nu
$$
where $C$ is the constant appearing in Corollary \ref{cor: SBP sums}.
Choose a coordinate $k_0$ such that $|a_{k_0}|= \|a\|_{\infty}$.
Applying Lemma~\ref{lem: SBP restriction}, we obtain
 \[
  \LL \Big( \sum_{k=1}^N a_k \xi_k, \nu \Big)
  \le \LL \Big(  a_{k_0} \xi_{k_0}, \nu \Big)
  \le \LL( \xi_{k_0}, 1 )
  \le 1-p.
 \]

In the opposite case where $\|a\|_{\infty} < \nu$, Proposition~\ref{prop: LCD lower simple} implies
$D(a,L) \ge 1/(2 \nu)$. Combining this with Corollary~\ref{cor: SBP sums}, we obtain
 \[
   \LL \Big( \sum_{j=1}^N a_j \xi_j, \nu \Big)
   \le CL \cdot 3 \nu \le \frac{3}{4},
 \]
 which completes the proof.
\end{proof}

\subsection{Special cases: projections of random vectors}		\label{s: special cases projections}

Another class of examples where Theorem~\ref{thm: SBP LCD} is useful is
for projections of a random vector $\xi$ onto a fixed subspace $E$ of $\R^N$.
Equivalently, this result allows us to estimate the distances between random vectors and
fixed subspaces, since $\dist(X,H) = \|P_{H^\perp} X\|_2$.

To deduce such estimates, we will make the matrix $V$ Theorem~\ref{thm: SBP LCD} encode
an orthogonal projection onto $E$. Let us pause to interpret the LCD of such matrix $V$ as the LCD of the subspace $E$ itself.

\begin{definition}[LCD of a subspace]			\label{def: LCD subspace}
  Fix $L>0$.
  For a subspace $E \subseteq \R^N$, the least common denominator is defined as
  $$
  D(E) = D(E, L) = \inf \{ D(v,L): \; v \in S_E\}.
  $$
\end{definition}

By now, we have defined LCD of vectors, matrices, and subspaces. The following lemma
relates them together.

\begin{lemma}[LCD of subspaces vs. matrices] 			\label{lem: LCD subspaces matrices}
  Let $E$ be a subspace of $\R^N$. Then
  \begin{enumerate}
    \item $D(E) = \inf \left\{ \|x\|_2 :\; x \in E, \; \dist(x, \Z^N) < L \sqrt{\log_+ \frac{\|x\|_2}{L}} \right\}$.
    \item Let $U \in \R^{N \times m}$ be a matrix such that $U^\tran U = I_m$ and $\im(U) = E$.
    Then $D(E) = D(U^\tran)$.
  \end{enumerate}
\end{lemma}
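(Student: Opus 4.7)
The two parts are essentially a book-keeping exercise: part~(1) rewrites the definition of $D(E)$ by reparameterizing over $E$ rather than over $S_E \times (0,\infty)$, and part~(2) transfers that formula to $U^\tran$ via the isometry $U$.

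For part~(1), the plan is to write every nonzero $x \in E$ uniquely as $x = \theta v$ with $v = x/\|x\|_2 \in S_E$ and $\theta = \|x\|_2 > 0$, and observe that under this correspondence the condition $\dist(\theta v,\Z^N) < L\sqrt{\log_+ \|\theta v\|_2/L}$ becomes literally $\dist(x,\Z^N) < L\sqrt{\log_+ \|x\|_2/L}$, while $\theta = \|x\|_2$. Thus
\[
  \inf_{v \in S_E} D(v,L)
  \;=\; \inf_{v \in S_E} \inf \left\{ \theta>0 : \dist(\theta v,\Z^N) < L\sqrt{\log_+ \tfrac{\theta}{L}} \right\}
\]
coincides with the infimum of $\|x\|_2$ over those $x \in E$ satisfying the displayed distance condition. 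One only has to rule out $x=0$: since $\dist(0,\Z^N)=0$ and the right-hand side $L\sqrt{\log_+ 0}$ equals $0$, the strict inequality fails at $0$, so the constraint $x \neq 0$ is automatic and can be dropped. This gives the formula claimed in part~(1).

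For part~(2), the key observation is that $U^\tran U = I_m$ means the columns of $U$ are orthonormal, so the linear map $U: \R^m \to \R^N$ is an isometry onto its image $E$. Unpacking the definition of $D$ for the matrix $U^\tran \in \R^{m \times N}$, one has $(U^\tran)^\tran \theta = U \theta$, so
\[
  D(U^\tran) \;=\; \inf \left\{ \|\theta\|_2 :\; \theta \in \R^m,\ \dist(U\theta,\Z^N) < L\sqrt{\log_+ \tfrac{\|U\theta\|_2}{L}} \right\}.
\]
Since $\|U\theta\|_2 = \|\theta\|_2$ and the correspondence $\theta \leftrightarrow x := U\theta$ is a bijection between $\R^m$ and $E$, the change of variable $x = U\theta$ turns the infimum into the formula established in part~(1), proving $D(U^\tran) = D(E)$.

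There is essentially no obstacle here; the only point that requires any care is the degenerate case $x=0$ in part~(1), which I have already handled by noting that $\log_+ 0 = 0$ makes the strict inequality untenable. Everything else is a straightforward rewriting using the definitions of $D(v,L)$, $D(V,L)$, $D(E,L)$, and the isometric embedding given by $U$.
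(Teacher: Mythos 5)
Your proof is correct and follows essentially the same route as the paper: the authors likewise treat part~(1) as a direct reparameterization of the definition (you spell out the $x=\theta v$ correspondence and the harmless exclusion of $x=0$, which the paper leaves implicit) and prove part~(2) by the same change of variable $x=U\theta$, using $U^\tran U = I_m$ to get $\|U\theta\|_2=\|\theta\|_2$ and $\im(U)=E$ to identify the range.
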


\begin{proof}
The first part follows directly from the definition.
To prove the second part, note that according to Definition~\ref{def: LCD} we have
$$
D(U^\tran) = \inf \left\{ \|\theta\|_2: \; \theta \in \R^m, \;
  \dist(U \theta, \Z^N) < L \sqrt{\log_+ \frac{\|U \theta\|_2}{L}} \right\}.
$$
Let us change variable to $x = U \theta$.
The assumptions on $U$ imply that $\|x\|_2 = \|\theta\|_2$ and
as $\theta$ runs over $\R^m$, $x$ runs over $\im(U) = E$.
We finish by applying the first part of this lemma.
\end{proof}

The following corollary is a version of a result from \cite{RV rectangular}.

\begin{corollary}[Small ball probabilities for projections]					\label{cor: SBP proj}
  Consider a random vector $\xi = (\xi_1,\ldots,\xi_N)$, where $\xi_k$ are i.i.d. copies
  of a real-valued random variable $\xi$ satisfying \eqref{eq: xi}.
  Let $E$ be a subspace of $\R^N$ with $\dim(E) = m$, and
  let $P_E$ denote the orthogonal projection onto $E$.
  Then for every $L \ge \sqrt{8m/p}$ we have
  \begin{equation}							\label{eq: SBP proj}
  \LL(P_E \xi, t \sqrt{m}) \le \Big( \frac{CL}{\sqrt{m}} \Big)^m
    \Big( t + \frac{\sqrt{m}}{D(E,L)} \Big)^m, \quad t \ge 0.
  \end{equation}
\end{corollary}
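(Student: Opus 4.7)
The plan is to reduce this statement directly to Theorem~\ref{thm: SBP LCD} by choosing an appropriate matrix $V$ that encodes the projection onto $E$, and then to translate the LCD of that matrix into the LCD of the subspace $E$ via Lemma~\ref{lem: LCD subspaces matrices}.

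Concretely, I would first fix an isometric embedding $U \in \R^{N \times m}$ of $E$ into $\R^N$, that is, a matrix with orthonormal columns and $\im(U) = E$, so that $P_E = UU^\tran$ and $U^\tran U = I_m$. The first step is to identify the concentration function of $P_E \xi$ with that of the lower-dimensional random vector $U^\tran \xi \in \R^m$. For any $u \in \R^N$ we can decompose $u = P_E u + P_{E^\perp} u$, and since $P_E \xi \in E$, the Pythagorean identity gives $\|P_E \xi - u\|_2^2 = \|P_E \xi - P_E u\|_2^2 + \|P_{E^\perp} u\|_2^2$. Hence the supremum over $u \in \R^N$ in the definition of $\LL(P_E \xi, t\sqrt{m})$ is attained on $E$, and writing $P_E u = U w$ with $w \in \R^m$ and using $\|U(U^\tran \xi - w)\|_2 = \|U^\tran \xi - w\|_2$ yields the identity
\[
  \LL(P_E \xi, t\sqrt{m}) = \LL(U^\tran \xi, t\sqrt{m}).
\]

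Next, I would apply Theorem~\ref{thm: SBP LCD} with $V := U^\tran \in \R^{m \times N}$. The orthonormality of the columns of $U$ gives $VV^\tran = U^\tran U = I_m$, so $\det(VV^\tran)^{1/2} = 1$, and the bound of Theorem~\ref{thm: SBP LCD} simplifies to
\[
  \LL(U^\tran \xi, t\sqrt{m}) \le \Big(\frac{CL}{\sqrt{m}}\Big)^m \Big(t + \frac{\sqrt{m}}{D(U^\tran)}\Big)^m.
\]
Finally, part~2 of Lemma~\ref{lem: LCD subspaces matrices} identifies $D(U^\tran)$ with $D(E, L)$, which substituted into the above inequality yields exactly \eqref{eq: SBP proj}.

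There is essentially no obstacle here beyond the bookkeeping; the content of the statement is packaged inside Theorem~\ref{thm: SBP LCD} and Lemma~\ref{lem: LCD subspaces matrices}, and the role of the corollary is to spell out the projection case, where the determinantal factor is trivialized by the isometry $U$. The only mild subtlety is the reduction $\LL(P_E \xi, \cdot) = \LL(U^\tran \xi, \cdot)$, which requires noting that translating by vectors outside $E$ can only increase the distance to $P_E \xi$, so the worst translation in the concentration function lies in $E$ itself.
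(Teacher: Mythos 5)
Your proof is correct and follows essentially the same route as the paper's: choose an isometric embedding $U$ with $U^\tran U = I_m$ and $UU^\tran = P_E$, observe $\LL(P_E\xi, t\sqrt{m}) = \LL(U^\tran\xi, t\sqrt{m})$, apply Theorem~\ref{thm: SBP LCD} with $V = U^\tran$ (so the determinant factor is $1$), and identify $D(U^\tran)$ with $D(E,L)$ via Lemma~\ref{lem: LCD subspaces matrices}. The only difference is that you spell out, via the Pythagorean decomposition, why the worst translate in the concentration function lies in $E$, a detail the paper leaves implicit.
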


\begin{proof}
Choose a matrix $U \in \R^{N \times m}$ so that $U^\tran U = I_m$ and $UU^\tran = P_E$.
Then $U$ acts as an isometric embedding from $\R^m$ into $\R^N$, i.e. $\|Ux\|_2 = \|x\|_2$
for all $x \in \R^m$. This yields
$$
\LL(P_E \xi, t \sqrt{m}) = \LL(UU^\tran \xi, t \sqrt{m}) = \LL(U^\tran \xi, t \sqrt{m}).
$$
We apply Theorem~\ref{thm: SBP LCD} for $V = U^\tran$ and note that
$\det(VV^\tran) = \det(U^\tran U) = \det(I_m) = 1$. Thus $\LL(P_E \xi, t \sqrt{m})$
gets bounded by the same quantity as in the right hand side of \eqref{eq: SBP proj}
except for $D(V)$. It remains to use Lemma~\ref{lem: LCD subspaces matrices},
which yields $D(V) = D(U^\tran) = D(E)$.
\end{proof}

\section{Distances between random vectors and subspaces: statement of the result}    \label{s: distance statement}
Our next goal is to prove a lower bound for the distance between independent random vectors
and subspaces. For continuous distributions, this was achieved in Lemma~\ref{lem: distance}.
Doing this for general, possibly discrete, distributions, is considerably more difficult.
The following result is a version of Lemma~\ref{lem: distance} for general distributions.

\begin{theorem}[Distance between random vectors and subspaces]				\label{thm: distance general}
  Let $H \in \C^{N \times n}$ be a random matrix which satisfies Assumptions~\ref{A}\footnote{Assumption \ref{A} is formulated for square random matrices. For rectangular matrices, one of the entries $A_{ij}$ or $A_{ji}$ may not exist. In this case, we assume that the other enty is independent of the rest.}
   and \ref{A: general distribution},
  and assume that $n = (1-\e) N$ for some $\e \in (2/n,c)$.
  Let $Z \in \C^N$ be a random vector independent of $H$, and whose coordinates
  are i.i.d. random variables satisfying the same distributional assumptions as specified in Assumption~\ref{A: general distribution}. Then
  $$
  \Pr{\dist(Z, \im(H)) \le \tau \sqrt{\e N} \  \textrm{and}  \ \BB_{H,M}}
  \le \left[ C \Big( \tau + \frac{1}{\sqrt{\e N}} + e^{-c/\sqrt{\e}} \Big) \right]^{\e N},
  \quad \tau \ge 0.
  $$
\end{theorem}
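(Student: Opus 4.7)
The plan is to follow the general outline sketched in Section~\ref{s: outline}: reduce $\dist(Z,\im(H))$ to a small-ball quantity, transfer the problem from $\C$ to $\R$, and then combine a small-ball inequality driven by the LCD of the real subspace $E := (\im H)^\perp$ with a union bound over a net of the sphere. Concretely, I would first write
\[
\dist(Z,\im(H)) = \|P_E Z\|_2, \qquad E = (\im H)^\perp,
\]
and condition on $H$; by independence, it then suffices to bound $\LL(P_E Z, \tau \sqrt{\e N})$ uniformly over ``typical'' realizations of $E$. Since the imaginary part of $Z$ is deterministic by Assumption~\ref{A}, I would apply Lemma~\ref{lem: Z real} to pass to a doubled real vector $\widehat Z \in \R^{2N}$ with \emph{all} independent coordinates satisfying \eqref{eq: xi}, and to the real subspace $\real E \subset \R^{2N}$ of real dimension $2\e N$. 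After this reduction the target becomes
\[
\LL\!\bigl(P_{\real E}\widehat Z, 2\tau \sqrt{\e N}\bigr)^{1/2}.
\]

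The main engine for bounding this is Corollary~\ref{cor: SBP proj}, which gives
\[
\LL\!\bigl(P_{\real E}\widehat Z, \cdot\bigr) \lesssim \bigl(\tau + \sqrt{\e N}/D(\real E,L)\bigr)^{2\e N}
\]
provided $D(\real E, L)$ is sufficiently large, with $L\sim \sqrt{\e N}$. So the real content is to prove that with exceptional probability at most the right-hand side of the theorem, the \emph{random} subspace $\real E = \ker(\real{H^*})$ is arithmetically unstructured, namely
\[
D(\real E, L) \gtrsim e^{c/\sqrt{\e}}\sqrt{\e N}.
\]
This is the hard part, and it is what Sections~\ref{s: kernels incompressible}--\ref{s: distance proof} are designed to produce. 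The strategy is: (i) use Section~\ref{s: kernels incompressible} to show every unit vector $v \in \real E$ is incompressible, so one can restrict attention to incompressible vectors on the real $2\e N$-sphere; (ii) for each ``level set'' $S_D$ of vectors with LCD roughly $D$ (further stratified by the real--imaginary correlation parameter, as flagged in Section~\ref{s: outline}), bound $\|H^* v\|_2$ from below for a fixed $v$ via the refined small-ball inequality of Section~\ref{s: SBP via correlations}; (iii) cover $S_D$ by a net from Section~\ref{s: net} whose cardinality is small precisely because small LCD or small correlation impose strong constraints; (iv) union-bound over the net and then over dyadic scales of $D$.

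The principal obstacle, and the one around which the architecture of the paper is built, is balancing those two effects in step~(iii): the complex-to-real doubling forces a net on a $2\e N$-dimensional sphere, but the independent rows of the underlying real part of $H$ only give small-ball probabilities on an $n$-scale. This is why a naive volumetric net blows the union bound, and why a stratification of the sphere by real--imaginary correlation is needed — weakly correlated vectors yield an improved small-ball bound that, in Section~\ref{s: distance proof}, is shown to exactly cancel the extra net entropy. Once this cancellation is established and combined with the incompressibility input, the probability that $D(\real E,L)$ is $\le e^{c/\sqrt{\e}}\sqrt{\e N}$ is controlled by $[C(1/\sqrt{\e N}+e^{-c/\sqrt{\e}})]^{\e N}$. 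Plugging this and the LCD-based small-ball inequality into the identity $\dist(Z,\im H)=\|P_E Z\|_2$ (and squaring out the $1/2$ from Lemma~\ref{lem: Z real}) yields the announced bound
\[
\bigl[C(\tau+1/\sqrt{\e N}+e^{-c/\sqrt{\e}})\bigr]^{\e N}.
\]
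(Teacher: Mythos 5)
Your proposal follows essentially the same route as the paper's proof: rewrite $\dist(Z,\im H) = \|P_{\ker(H^*)}Z\|_2$, transfer to reals via Lemma~\ref{lem: Z real}, plug into Corollary~\ref{cor: SBP proj}, and reduce the whole theorem to showing that $D(\real{\ker(H^*)},L)$ is large with overwhelming probability — which is precisely Theorem~\ref{thm: kernel unstructured}, proved by exactly the machinery you enumerate (incompressibility of the kernel, LCD level sets stratified by real--imaginary correlation, the refined small-ball bound of Section~\ref{s: SBP via correlations}, the adapted nets of Section~\ref{s: net}, and the balancing union bound of Section~\ref{s: distance proof}).

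One small accounting correction: in the paper, the $1/\sqrt{\e N}$ term in the final bound does not come from the exceptional probability of the LCD bound. Theorem~\ref{thm: kernel unstructured} gives $\Pr{D(\real{\ker B},L) \le \min(\sqrt{N}\,e^{c/\sqrt{\e}},\, \e N)} \le e^{-cN}$ — a \emph{much} smaller exceptional probability than the right-hand side of the theorem. The $1/\sqrt{\e N}$ term arises deterministically from the cap $\e N$ inside that $\min$: on the good event one only has $\sqrt{\e N}/D \le \max(\sqrt{\e}\,e^{-c/\sqrt{\e}},\, 1/\sqrt{\e N})$, and this is what feeds into \eqref{eq: SBP proj}. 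The $e^{-cN}$ is then absorbed into the main term at the end. Your phrasing — that $\Pr{D \le e^{c/\sqrt{\e}}\sqrt{\e N}}$ is itself of order $[C(1/\sqrt{\e N}+e^{-c/\sqrt{\e}})]^{\e N}$ — conflates these two sources; the net result is the same, but it is worth keeping the roles separate, since for very small $\e$ your threshold $e^{c/\sqrt{\e}}\sqrt{\e N}$ can exceed $\e N$ and the kernel-LCD theorem as stated does not control that range.
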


A version of this theorem was proved in \cite{RV rectangular} in the simpler situation where
the entries of $H$ are real-valued and all independent. In this simpler case, \cite{RV rectangular}
gives the following optimal bound:
$$
\Pr{\dist(Z, \im(H)) \le \tau \sqrt{\e N}} \le (C\tau)^{\e N} + e^{-cN}.
$$
We do not know if the same bound can be proved in the setting of Theorem~\ref{thm: distance general}.

\medskip

To prove Theorem~\ref{thm: distance general}, we will first reduce it to a problem over reals --
much like we did in Section~\ref{s: from C to R}.
Then, expressing the distance $\dist(Z, \im(H))$ as the norm of the projection of $Z$ onto
$\im(H)^\perp = \ker(H^*)$,
we should be able to apply Corollary~\ref{cor: SBP proj}.
However, for the resulting probability bound \eqref{eq: SBP proj}
to be meaningful, we would need to show that the least common denominator $D(\ker(H^*),L)$ is large,
or in other words, that the subspace $H$ is unstructured.
This will be a major step in the argument. Eventually we will achieve this in Section~\ref{s: distance proof},
which will allow us to quickly finalize the proof of Theorem~\ref{thm: distance general}.

\medskip

In preparation for the proof of Theorem~\ref{thm: distance general},
let us express the distance we need to estimate as follows:
\begin{equation}							\label{eq: dist as proj general}
\dist(Z, \im(H)) = \|P_{\im(H)^\perp} Z\|_2 = \|P_{\ker(B)} Z\|_2, \quad \text{where } B = H^* \in \C^{n \times N}.
\end{equation}
Our goal is to show that $\ker(B)$ is arithmetically unstructured.

\subsection{Transferring the problem from $\C$ to $\R$}			\label{s: from C to R general}

Similarly to our argument for continuous distributions, we will now transfer
the distance problem from the complex to the real field. In Section~\ref{s: from C to R}, we
introduced the operation $z \mapsto \real{z}$ that makes a complex vector $z = x+iy$ in $\C^N$ real
by defining $\real{z} := \vectwo{x}{y} \in \R^{2N}$.
We also introduced this operation for subspaces $E$ of $\C^N$ by defining
$\real{E} = \{ \real{z} :\; z \in E \} \subset \R^{2N}$.

In the analysis of the distance problem for continuous distributions,
we did not need to know anything about the subspaces $H'_j$ beyond their dimensions.
This time, our analysis will be sensitive to the structure of the subspace $\ker(B)$.
For this purpose, we will need to transfer the matrix $B$ from complex to real field.
We can do this in a way that preserves matrix-vector multiplication as follows:
\begin{equation}         \label{eq: real B}
\text{For } B = R + i T \in \C^{n \times N}, \text{ define }
\real{B} =
  \begin{bmatrix}
  R & -T \\
  T & R
  \end{bmatrix} \in \R^{2n \times 2N}.
\end{equation}

We already observed two elementary properties of the operation $z \mapsto \real{z}$
in Lemma~\ref{lem: real}; let us record one more straightforward fact.

\begin{lemma}[Elementary property of operation $x \mapsto \real{x}$]				\label{lem: real new}
  For a complex matrix $B$ and a vector $z$, one has $\real{Bz} = \real{B}{\real{z}}$,
  and consequently $\real{\ker(B)} = \ker(\real{B})$.
\end{lemma}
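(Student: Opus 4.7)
The plan is to verify the identity $\real{Bz} = \real{B}\,\real{z}$ by a direct block-matrix computation, and then deduce the kernel identity as an easy corollary. Since the operation $w \mapsto \real{w}$ is an $\R$-linear bijection from $\C^N$ onto $\R^{2N}$, both statements should reduce to a one-line calculation.

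First, I would write $B = R + iT$ with $R,T \in \R^{n \times N}$ and $z = x+iy$ with $x,y \in \R^N$. Expanding $Bz$ using the bilinearity of matrix multiplication gives
\[
  Bz = (Rx - Ty) + i(Tx + Ry),
\]
so by the definition of the operation $w \mapsto \real{w}$,
\[
  \real{Bz} = \vectwo{Rx - Ty}{Tx + Ry}.
\]
On the other hand, applying the block matrix $\real{B}$ from \eqref{eq: real B} to $\real{z} = \vectwo{x}{y}$ yields exactly the same column vector. This establishes the first claim.

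For the kernel identity, the forward inclusion $\real{\ker(B)} \subseteq \ker(\real{B})$ is immediate: if $Bz = 0$ then $\real{B}\,\real{z} = \real{Bz} = \real{0} = 0$. For the reverse inclusion, given any $w \in \ker(\real{B})$, I would write $w = \vectwo{x}{y}$ with $x,y \in \R^N$ and set $z := x + iy \in \C^N$, so that $w = \real{z}$. Then $\real{Bz} = \real{B}\,\real{z} = \real{B}\,w = 0$, and since $\real{\,\cdot\,}$ is injective on $\C^N$ this forces $Bz = 0$, i.e.\ $z \in \ker(B)$ and hence $w \in \real{\ker(B)}$.

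There is no real obstacle here: the lemma is a routine compatibility check between the two ``realification'' operations (for vectors, via $\vectwo{x}{y}$, and for matrices, via the $2\times 2$ block in \eqref{eq: real B}), engineered precisely so that matrix-vector multiplication is preserved. The only thing to be careful about is the sign pattern in the off-diagonal blocks of $\real{B}$, which must match the sign that arises from $i \cdot i = -1$ when expanding $(R+iT)(x+iy)$; the definition in \eqref{eq: real B} has been set up so that this matches automatically.
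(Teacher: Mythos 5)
The paper records this lemma without proof (calling it a ``straightforward fact''), and your argument is exactly the expected verification: expand $(R+iT)(x+iy)$, match the blocks of $\real{B}$ against the definition in \eqref{eq: real B}, and use the $\R$-linear bijectivity of $w \mapsto \real{w}$ for the kernel identity. Correct in every detail.
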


Let us return to the distance problem \eqref{eq: dist as proj general}.
Applying Lemma~\ref{lem: Z real} for $E = \ker B$, we conclude that
$$
\LL(P_{\ker(B)} Z, r) \le \LL(P_{\real{\ker(B)}} \widehat{Z}, 2r)^{1/2}.
$$
Using the interpretation of distance as norm of projection in \eqref{eq: dist as proj general},
we can summarize the first step toward the proof of Theorem~\ref{thm: distance general}.
We showed that
\begin{equation}         \label{eq: reduction of dist to R}
\Pr{\dist(Z, \im(H)) \le \tau \sqrt{\e N}} \le \LL(P_{\real{\ker(B)}} \widehat{Z}, 2 \tau \sqrt{\e N})^{1/2}.
\end{equation}

Recall that here, according to Lemma~\ref{lem: real new}, $\real{\ker B} = \ker \real{B}$,
where $\real{B}$ is the random matrix from \eqref{eq: real B}
and $\widehat{Z} \in \R^{2N}$ is a random vector.
Specifically, $T$ is a fixed $n \times N$ matrix
and $R$ is an $n \times N$ random matrix, which satisfies the structural and distributional
requirements of Assumptions~\ref{A} and \ref{A: general distribution} (except that $R$ entirely real).
The coordinates of the random vector $\widehat{Z}$ are i.i.d. copies of
a real random variable $\xi$ satisfying \eqref{eq: xi}.

\section{Kernels of random matrices are incompressible}		\label{s: kernels incompressible}

\subsection{Compressible and incompressible vectors}

Before we can show that the kernel of $B$ consists of arithmetically unstructured vectors,
we will prove a much simpler result. It states that the kernel of $B$ consists of
{\em incompressible vectors} -- those whose mass is not concentrated on a small
number of coordinates. The partition of the space into compressible and incompressible vectors
has been instrumental in arguments leading to invertibility random matrices,
see \cite{RV square, RV rectangular, V symmetric}.

\begin{definition}[Compressible and incompressible vectors] 		\label{def: Comp Incomp}
  Let $c_0, c_1 \in (0,1)$ be two constants.
  A vector $z \in \C^N$ is called {\em sparse} if $|\supp(z)| \le c_0 N$.
  A vector $z \in S_\C^{N-1}$ is called {\em compressible} if $x$
  is within Euclidean distance $c_1$ from the set of all sparse vectors.
  A vector $z \in S_C^{N-1}$ is called {\em incompressible} if it is not compressible.
  The sets of compressible and incompressible vectors in $S_\C^{N-1}$
  will be denoted by $\Comp$ and $\Incomp$ respectively.
\end{definition}

 The definition above depends on the choice of the constants $c_0,c_1$.
 These constants will be chosen in Proposition~\ref{prop: invertibility on Comp} and remain fixed throughout the paper.

As we already announced, our goal in this section is to prove that, with high probability, the kernel of $B$ consists entirely of incompressible vectors.
We will deduce this by providing a uniform lower bound for $\|Bz\|_2$ for all compressible vectors $z$.

\subsection{Relating $\|Bz\|_2$ to a sum of independent random variables}		\label{s: Bz as a sum}

Let us fix a vector $z$ for now.
We would like to rexpress $\|Bz\|_2$ to a sum of independent random variables, and then to use
bounds on small ball probabilities from Section~\ref{s: LCD and SBP}.
Using the real version $\real{B}$ of the matrix $B$, and the real version $\real{z}$ of the vector $z$
we introduced in Section~\ref{s: from C to R general}, we can write
\begin{equation}							\label{eq: Bz two terms}
\|Bz\|_2^2 = \|\real{B} \real{z}\|_2^2 = \|Rx+Ty\|_2^2 + \|Ry-Tx\|_2^2.
\end{equation}
Let us fix a subset $J \subset [N]$. Dropping the coefficients of the vectors $Rx+Ty$ and $Ry-Tx$
indexed by $J$, we obtain
$$
\|Bz\|_2^2 \ge \|R_{J^c \times [n]} x + a\|_2^2 + \|R_{J^c \times [n]} y - b\|_2^2,
$$
where $a = T_{J^c \times [n]} y$ and $b = T_{J^c \times [n]} x$ are fixed vectors.

Further, let us decompose $R_{J^c \times [n]} = R_{J^c \times J} + R_{J^c \times J^c}$, where $R_{J^c \times I}$ denotes the matrix $R_{J^c \times [n]}$ whose columns which does not belong to $I$ are replaced by zeros.
Assumption~\ref{A} implies that these two components are independent, and moreover
the first one, $R_{J^c \times J}$, has independent entries.
So let us condition on the second component, $R_{J^c \times J^c}$.
Absorbing its contribution into $a$ and $b$, we obtain
$$
\|Bz\|_2^2 \ge \|R_{J^c \times J} x + a'\|_2^2 + \|R_{J^c \times J} y - b'\|_2^2,
$$
where $a'$ and $b'$ are fixed vectors.
Expanding the matrix-vector multiplication, we arrive at the bound
\begin{equation}							\label{eq: Bz double sum restricted}
\|Bz\|_2^2
\ge \sum_{i \in [n] \setminus J} X_i^2 + Y_i^2,
\end{equation}
where
\begin{equation}         \label{eq: Xj Yj}
X_i = \sum_{j \in J} R_{ij} x_j + a'_j, \quad Y_i = \sum_{j \in J} R_{ij} y_j - b'_j
\end{equation}
and $a'_j$ and $b'_j$ are fixed numbers.
The sum in \eqref{eq: Bz double sum restricted} should be convenient to control,
since all $R_{ij}$ appearing in \eqref{eq: Xj Yj} are independent random variables.
\bigskip

\subsection{A lower bound on $\|Bz\|_2$ for compressible vectors}		\label{s: compressible}

We start with a simple and general lower bound on $\|Bz\|_2$ for a fixed vector $z$.

\begin{proposition}[Matrix acting on a fixed vector: simple bound]			\label{prop: Bz simple}
  Let $n \le N \le 2n$, and $B \in \C^{n \times N}$ be a random matrix satisfying Assumptions~\ref{A} and \ref{A: general distribution}.
  Then for any fixed vector $z \in \C^N$ with $\|z\|_2 = 1$ we have
  $$
  \Pr{ \|Bz\|_2 \le c\sqrt{n} }  \le e^{-cn}.
  $$
\end{proposition}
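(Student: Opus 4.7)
The plan is to use the decomposition from Section~\ref{s: Bz as a sum} to reduce $\|Bz\|_2^2$ to a sum of squares of independent random variables $X_i$, each of which is an affine function of i.i.d.\ entries of $R$, and then combine the small ball bound of Lemma~\ref{lem: SBP simple} with a Chernoff-type concentration argument.

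Write $z = x+iy$ with $x,y \in \R^N$ and $\|x\|_2^2 + \|y\|_2^2 = 1$, and assume without loss of generality that $\|x\|_2^2 \ge 1/2$. I would choose a subset $J \subset [N]$ satisfying: (i) $\|x_J\|_2 \ge c_0$ for an absolute constant $c_0>0$; (ii) $|[n]\setminus J| \ge n/2$; and (iii) the entries $(R_{ij})_{i \in [n]\setminus J,\, j \in J}$ are mutually independent. Condition (iii) is automatic under Assumption~\ref{A}: the only possible dependency is between $R_{ij}$ and $R_{ji}$, but the restrictions $i \in [n]\setminus J$ and $j \in J$ prevent any such pair from both appearing in the collection. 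For (i)--(ii), I would split cases on whether $\|x_{[n]}\|_2^2 \ge 1/4$ or $\|x_{[N]\setminus[n]}\|_2^2 \ge 1/4$. In the first case, let $J \subset [n]$ consist of the $\lceil n/2 \rceil$ indices at which $|x_j|$ is largest; a simple averaging argument gives $\|x_J\|_2^2 \ge \tfrac12 \|x_{[n]}\|_2^2 \ge 1/8$. In the second case take $J = [N]\setminus [n]$, which has size at most $n$, so that $[n]\setminus J = [n]$ and $\|x_J\|_2^2 \ge 1/4$.

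With such $J$ fixed, I would condition on $R_{J^c \times J^c}$ and apply the bound \eqref{eq: Bz double sum restricted} to get
$$
\|Bz\|_2^2 \ge \sum_{i \in [n]\setminus J} X_i^2, \qquad X_i = \sum_{j \in J} R_{ij} x_j + a'_i,
$$
for some fixed numbers $a'_i$ (absorbing the conditioned contributions). Lemma~\ref{lem: SBP simple}, applied to the unit-norm coefficient vector $(x_j/\|x_J\|_2)_{j\in J}$ and then rescaled, gives
$\Pr{|X_i| \le c_1\|x_J\|_2} \le \LL(X_i,\, c_1\|x_J\|_2) \le 1 - c_2$
for every $i$, and hence $\Pr{|X_i| \ge c_1 c_0} \ge c_2$. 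By (iii) the family $(X_i)_{i \in [n]\setminus J}$ is independent, so Chernoff's inequality shows that the number of indices $i$ with $|X_i| \ge c_1 c_0$ is at least $(c_2/2)\,|[n]\setminus J| \ge c_3 n$ except on an event of probability at most $e^{-c_4 n}$. On the complementary event,
$$
\|Bz\|_2^2 \ge c_3 n \cdot (c_1 c_0)^2 \ge c^2 n,
$$
which gives the claimed $\|Bz\|_2 \ge c\sqrt{n}$.

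The only real subtlety is the choice of $J$: one has to pair a constant-mass block of columns with a linear-size block of rows carrying genuinely independent entries, even when $N>n$ or when the mass of $x$ sits entirely outside $[n]$. The case split above disposes of both possibilities; once it is in place the remainder is a routine small-ball-plus-Chernoff calculation.
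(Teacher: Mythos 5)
Your proposal is correct and follows essentially the same route as the paper: reduce $\|Bz\|_2^2$ to $\sum_{i\in[n]\setminus J} X_i^2$ via \eqref{eq: Bz double sum restricted}, apply Lemma~\ref{lem: SBP simple} to get a constant small-ball bound for each $X_i$, note independence across $i$, and conclude by concentration. The only real difference is in how $J$ is chosen: you split cases on whether $x$'s mass sits mostly in $[n]$ or in $[N]\setminus[n]$, whereas the paper simply takes $J$ to be the indices of the $N/4$ largest coordinates of $z$; this single choice already gives $\|z_J\|_2^2\ge 1/4$ (so $\|x_J\|_2$ or $\|y_J\|_2$ is bounded below), guarantees $|[n]\setminus J|\ge n-N/4\ge n/2$ since $N\le 2n$, and yields independence of the block $(R_{ij})_{i\in[n]\setminus J,\;j\in J}$ because no conflicting pair $(j,i)$ can have $j\in J$ and $i\in[n]\setminus J$ simultaneously. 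So the case split is unnecessary but harmless. Your use of Chernoff in place of the paper's explicit binomial counting is an equivalent finish.
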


\begin{proof}
Let $z = x+iy$, and choose $J$ to be the set of indices of the $N/4$ largest coordinates of $z$.
Since $z$ is a unit vector, we have
$$
\|x_J\|_2^2 + \|y_J\|_2^2 = \|z_J\|_2^2 \ge \frac{1}{4}.
$$
It follows that either $x_J$ or $y_J$ has norm at least $1/4$.
Without loss of generality, let us assume that $\|x_J\|_2 \ge 1/4$.

Dropping the terms $Y_j$ from \eqref{eq: Bz double sum restricted}, we see that
\begin{equation}         \label{eq: Bz Xj}
\|Bz\|_2^2
\ge \sum_{i \in [n] \setminus J} X_j^2
\quad \text{where} \quad
X_j = \sum_{j \in J} R_{ij} x_j + a'_j.
\end{equation}
By Assumption~\ref{A},  $R_{ij}$ are i.i.d. random variables. Moreover, their distribution satisfies
Assumption~\ref{A: general distribution}, so we can apply Lemma~\ref{lem: SBP simple}
and conclude that for each $j$,
\begin{equation}							\label{eq: weakest SBP}
\Pr{ |X_j| \le c } \le 1 - c'.
\end{equation}

Assume that $\|Bz\|_2^2 \le \a c^2 n$ where $\a \in (0,1)$ is a number to be chosen later.
By \eqref{eq: Bz Xj}, this yields $\sum_{i \in [n] \setminus J} X_j^2 \le \a c^2 n$,
which in turn implies that $X_j \le c$ for at least $|[n] \setminus J| - \a n$ random variables
$X_j$ in this sum. 
Therefore, using independence we obtain
\begin{equation}         \label{eq: Bz via combinations}
  \Pr { \|Bz\|_2^2 \le \a c^2 n }
  \le \binom{|[n] \setminus J|}{\a n} \cdot (1-c')^{|[n] \setminus J|- \a n}
  \le \left( \frac{e}{ \a} \right)^{ \a n} \cdot (1-c')^{(1/2- \a) n}.
\end{equation}
The second inequality holds if we choose $\a$ small enough so that $\a n \le n/4$,
while $|[n] \setminus J| \ge n-N/4 \ge n/2$ by assumption.
The probability bound in \eqref{eq: Bz via combinations} can be made smaller than $e^{-\bar{c} n}$
for some $\bar{c} = \bar{c}(c')>0$ by choosing $\a = \alpha(c') > 0$ sufficiently small.
This completes the proof.
\end{proof}

We are going to argue that the lower bound in Proposition~\ref{prop: Bz simple}
holds not only for a fixed unit vector $z$ but also uniformly over $z \in \Comp$.
This will follow by combining Proposition~\ref{prop: Bz simple} with the following
standard construction on a net for the set of compressible vectors.

\begin{lemma}[Net for compressible vectors]  \label{lem: comp net}
 For any $\d \in (0,1)$, there exists a $(2c_1)$-net of the set of $\Comp$ of cardinality at most
 \[
   \left( \frac{C}{c_0 c_1^2}\right)^{c_0 N}.
 \]
\end{lemma}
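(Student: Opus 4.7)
The plan is to exploit the structural definition of $\Comp$ directly: every compressible vector is within distance $c_1$ of a \emph{sparse} unit vector, and sparse unit vectors live in a union of low-dimensional coordinate subspaces, which are easy to cover by standard volumetric nets. Combining a net for the sparse vectors with the triangle inequality yields a $(2c_1)$-net for $\Comp$.

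More precisely, first I would fix an index subset $J \subset [N]$ with $|J| \le c_0 N$ and consider the unit ball of the coordinate subspace $\C^J$. Identifying $\C^J$ with $\R^{2|J|}$, a standard volumetric argument produces a $c_1$-net $\NN_J$ of the unit sphere of $\C^J$ of cardinality at most $(3/c_1)^{2|J|} \le (3/c_1)^{2 c_0 N}$. Taking the union
\[
\NN_0 := \bigcup_{|J| \le c_0 N} \NN_J
\]
yields a $c_1$-net of the set of sparse unit vectors in $\C^N$, because every such vector lies in some $\C^J$ with $|J| \le c_0 N$. The number of admissible subsets $J$ is at most $\binom{N}{\lfloor c_0 N \rfloor} \le (e/c_0)^{c_0 N}$, so
\[
|\NN_0| \le \Big(\frac{e}{c_0}\Big)^{c_0 N} \Big(\frac{3}{c_1}\Big)^{2 c_0 N} \le \Big(\frac{C}{c_0 c_1^2}\Big)^{c_0 N}.
\]

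Second, I would promote $\NN_0$ to a $(2c_1)$-net of $\Comp$. By Definition~\ref{def: Comp Incomp}, every $z \in \Comp$ lies within Euclidean distance $c_1$ of some sparse unit vector $z'$; and by construction there exists $z_0 \in \NN_0$ with $\|z' - z_0\|_2 \le c_1$. The triangle inequality gives $\|z - z_0\|_2 \le 2 c_1$, so $\NN_0$ itself serves as the required $(2c_1)$-net, and its cardinality satisfies the stated bound.

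There is really no obstacle here; the only minor point to be careful about is the factor of two in the dimension when passing from $\C^J$ to $\R^{2|J|}$, which is what produces the $c_1^{-2}$ (rather than $c_1^{-1}$) in the final estimate. The constant $C$ absorbs the numerical factors $e$ and $3$ arising from the two nets.
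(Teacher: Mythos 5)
Your proposal is correct and follows essentially the same route as the paper: cover the sparse unit vectors by taking a volumetric $c_1$-net on each coordinate subsphere $S_\C^J$, take a union bound over the choices of $J$, and then observe that a $c_1$-net of the sparse vectors is automatically a $(2c_1)$-net of $\Comp$ by the triangle inequality. The only cosmetic difference is that the paper restricts to $|J|=c_0N$ (which already suffices, since a vector supported on a smaller set lies on one of those subspheres), while you allow $|J|\le c_0N$; the bounds are the same.
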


\begin{proof}
First we construct a $c_1$-net of the set of sparse vectors.
This set is a union of coordinate subspheres of $S_{\C}^J$ for all sets $J \subset [N]$
of cardinality $c_0 N$. For a fixed $J$, the standard volume argument yields
a $c_1$-net of $S_{\C}^J$ of cardinality at most $(C_1 / c_0 c_1^2)^{c_0 N}$. A union bound
over $\binom{N}{c_0 N} \le C_2^N$ choices of $J$ produces a $c_1$-net of the set of sparse vectors
with cardinality at most $(C / c_0 c_1^2)^{c_0 N}$.
By approximation, this is automatically a $(2c_1)$-net for
the set of compressible vectors.
\end{proof}

\begin{proposition}[A lower bound on the set of compressible vectors]			\label{prop: invertibility on Comp}
  Let $B \in \C^{n \times N}$ be a random matrix satisfying Assumptions~\ref{A} and \ref{A: general distribution}.
  Then one can choose constants $c_0, c_1 \in (0,1)$ in Definition \ref{def: Comp Incomp} depending on
  $p$ and $K$ only, and so that
  $$
  \Pr{ \inf_{z \in \Comp} \|Bz\|_2 \le c \sqrt{n} \text{ and } \BB_{B,M}} \le e^{-cn}.
  $$
\end{proposition}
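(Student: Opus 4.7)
The plan is a standard net-plus-union-bound argument, combining the fixed-vector bound from Proposition~\ref{prop: Bz simple} with the net from Lemma~\ref{lem: comp net} and the operator norm bound from $\BB_{B,M}$.

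First, I would invoke Proposition~\ref{prop: Bz simple} to obtain a constant $c_* > 0$ (depending on $p,K$) so that for every fixed unit vector $z_0 \in S_{\C}^{N-1}$,
\[
   \Pr{\|Bz_0\|_2 \le c_* \sqrt{n}} \le e^{-c_* n}.
\]
Next, given constants $c_0, c_1 \in (0,1)$ to be chosen, I would apply Lemma~\ref{lem: comp net} to get a $(2c_1)$-net $\NN$ of $\Comp$ with $|\NN| \le (C/(c_0 c_1^2))^{c_0 N} \le (C/(c_0 c_1^2))^{2 c_0 n}$ (using $N \le 2n$, which we may assume for our application; otherwise the constants adjust trivially). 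A union bound then gives
\[
  \Pr{\exists z_0 \in \NN: \|Bz_0\|_2 \le c_* \sqrt{n}} \le \left(\frac{C}{c_0 c_1^2}\right)^{2 c_0 n} e^{-c_* n}.
\]

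Now I would fix the constants. First choose $c_1 = c_*/(4M)$, so that on the boundedness event $\BB_{B,M}$, any $z \in \Comp$ with an approximant $z_0 \in \NN$ satisfying $\|z - z_0\|_2 \le 2 c_1$ obeys
\[
  \|Bz\|_2 \ge \|Bz_0\|_2 - \|B\| \cdot 2c_1 \ge \|Bz_0\|_2 - 2 M c_1 \sqrt{n} = \|Bz_0\|_2 - \tfrac{c_*}{2} \sqrt{n}.
\]
Then choose $c_0 = c_0(p,K,M) > 0$ small enough so that $2 c_0 \log(C/(c_0 c_1^2)) \le c_*/2$; this guarantees the union-bound probability is at most $e^{-c_* n/2}$. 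On the intersection of $\BB_{B,M}$ with the complement of the bad net event, every $z \in \Comp$ satisfies $\|Bz\|_2 \ge c_* \sqrt{n} - (c_*/2)\sqrt{n} = (c_*/2)\sqrt{n}$, which is the claimed bound (with a new constant $c = c_*/2$).

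The only delicate point is the interplay between the size of the net and the smallness of the per-point probability: we need the exponent $2 c_0 \log(C/(c_0 c_1^2))$ from the net to be dominated by $c_*$, which forces $c_0$ to be small relative to $c_*$ and to $c_1$. Since $c_1$ is already fixed in terms of $c_*$ and $M$, this is a clean choice and presents no real obstacle. All constants $c_0, c_1$ end up depending only on $p, K, M$, as required by Definition~\ref{def: Comp Incomp}.
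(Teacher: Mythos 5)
Your proof is correct and follows essentially the same route as the paper: Proposition~\ref{prop: Bz simple} for a single vector, Lemma~\ref{lem: comp net} for the net, a union bound, and then the triangle inequality on $\BB_{B,M}$ with $c_1 \sim c_*/M$ and $c_0$ chosen small enough to beat the net's cardinality. The only cosmetic difference is that you argue in the contrapositive (conditioning on the good event for the whole net) rather than starting from the bad event as the paper does, and you should be slightly careful that the boundedness event gives $\|B\| \le M\sqrt{N} \le M\sqrt{2n}$ rather than $M\sqrt{n}$ — but this only changes $c_1$ by a factor of $\sqrt{2}$ and affects nothing.
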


\begin{proof}
Let us choose $c_1 = c/(4M)$ and let $\NN$ be a $(2 c_1)$-net of the set $\Comp$ given
by Lemma~\ref{lem: comp net}.

Assume the bad event in Proposition~\ref{prop: invertibility on Comp} occurs, so
thus $\|Bz\|_2 \le c \sqrt{n}$ for some $z \in \Comp$ and $\|B\| \le M \sqrt{n}$.
Choose $z_0 \in \NN$ such that $\|z-z_0\|_2 \le 2 c_1$. By triangle inequality, we have
$$
\|Bz_0\|_2 \le \|Bz\|_2 + \|B\| \, \|z-z_0\|_2
\le c \sqrt{n} + M \sqrt{N} \cdot 2c_1
\le 2c \sqrt{n}.
$$
In the last inequality, we used the definition of $c_1$ and the fact that $N \le 2n$.

Furthermore, Proposition \ref{prop: Bz simple} states that for fixed $z_0$, the inequality
$\|Bz_0\|_2 \le 2c \sqrt{n}$ holds with probability at most $e^{-cn}$.
Combining this with the union bound over $z_0 \in \NN$ and using the cardinality of $\NN$
given by Lemma~\ref{lem: comp net}, we conclude that the bad event in
Proposition~\ref{prop: invertibility on Comp} holds with probability at most
$$
e^{-cn} \cdot \left( \frac{C}{c_0 c_1^2}\right)^{c_0 N}.
$$
 Choosing $c_0$ so that the last expression does not exceed $e^{-c_0 n/2}$ completes the proof.
\end{proof}

Proposition~\ref{prop: invertibility on Comp} implies in particular
that with high probability the kernel of $B$ consists of incompressible vectors:
$$
\ker B \cap S^{N-1} \subseteq \Incomp.
$$

\section{Small ball probabilities via real-imaginary correlations}  \label{s: SBP via correlations}

Recall that our big goal is to show that the kernel of $B$ is unstructured, which means that
all vectors in $\ker(B)$ have large LCD. We may try to approach this problem using
the same general line of attack as in Section~\ref{s: kernels incompressible}.
Namely, we can try to bound $\|Bz\|_2$ below
uniformly on the set of vectors with small LCD.

This will require us to considerably sharpen the tools we developed in Section~\ref{s: kernels incompressible} --
small ball probabilities and constructions of nets. More precisely, we would like to make the probability in
Proposition~\ref{prop: Bz simple} exponential in $2n$ rather than $n$; an ideal bound for us would be
\begin{equation}							\label{eq: SBP ideal}
\Pr{ \|Bz\|_2 \le t \sqrt{n} }  \le \Big( Ct + \frac{1}{\sqrt{n}} \Big)^{2n}, \quad t \ge 0.
\end{equation}
A bound like this will be crucial when we combine it with a union bound over a net, just
like in Section~\ref{s: kernels incompressible}. But there the nets were for compressible vectors $z \in \C^{N}$.
Now we will have to handle much larger sets: {\em the level sets of LCD}.
As we will describe in Section~\ref{s: net}, the nets of these level sets are exponential in $2N$.
To control them, it is crucial to have the small probability bound that is also exponential
in $2n$. (The difference between $2N$ and $2n$ is minor
and can intuitively be neglected since $N = (1+\e) n$.)

At first glance, this should be possible because our problem is over $\C$, so the dimension
there should double compared to $\R$. But recall that according to Assumption~\ref{A},
the imaginary part of $B$ is fixed, so there is no extra randomness that could help us
double the exponent.

One can even come up concrete examples where the bound \eqref{eq: SBP ideal} fails.
Assume that the entries of $B$ are {\em real} independent random variables with bounded densities,
and that $z$ is a real vector. Since the matrix $R$ has $n$ rows, the optimal small ball probability is
\begin{equation}         \label{eq: ideal SBP}
\Pr{ \|Bz\|_2 \le t \sqrt{n} }  \le (Ct)^{n}.
\end{equation}
The same is true for complex vectors $z$ with very correlated real and imaginary parts,
such as for $z=x+ix$.

These observations might lead us to the conclusion that it must be impossible
to combine the small ball probabilities with nets.
However, one can notice that the examples of vectors $z$ we just considered are special.
The real vectors $z$ are contained in the $N$-dimensional real sphere, and this sphere
has a net exponential in $N$ rather than $2N$. The same holds for vectors of the type $z=x+ix$.
So these special vectors have smaller nets, which can hopefully be balanced by
the small ball probabilities like \eqref{eq: ideal SBP}.

For other, more ``typical'' vectors, we might hope for stronger probability bounds.
Consider, for example, the vector $z=x+iy$, where $x$ and $y$ have disjoint support and
both have norms $\Omega(1)$. Still assuming that $B$ is a real matrix, we then have
$\|Bz\|_2^2 = \|Bx\|_2^2 + \|By\|_2^2$. The assumption of disjoint support yields implies that
$Bx$ and $By$ are independent, and $\|Bz\|_2^2$ is thus a sum of $2n$ independent
random variables (the row-vector products). So we do have a double amount of randomness here,
and
$$
\Pr{ \|Bz\|_2 \le t \sqrt{n} }  \le ( Ct  )^{2n}.
$$
Such probability bounds can balance a net for the whole sphere of $\C^N$,
which is exponential in $2N$.

Guided by these examples, we may surmise that the small ball probabilities for $Bz$
and the cardinalities of nets for vectors $z$ both depend on the correlation of real
and the imaginary parts of $z$.
Exploring this interaction in search for tight matching bounds for both quantities will
be the main technical difficulty in proving Theorem \ref{thm: distance general}.
We will get a hold of small ball probabilities in the current section, and of cardinalities of nets
in Section \ref{s: net}.


\subsection{Toward a more sensitive bound}

We start by representing $\|Bz\|_2^2$ as a sum of independent random variables
exactly as in Section~\ref{s: Bz as a sum}, leading up to \eqref{eq: Bz double sum restricted}.
In a moment, we will apply Littlewood-Offord theory for each term of
the sum in \eqref{eq: Bz double sum restricted}.
To do this, we express these terms as functions of the rows of $R_{J^c \times J}$ as follows:
\begin{equation}							\label{eq: Bz lower sum}
\|Bz\|_2^2
\ge \sum_{i \in [n] \setminus J} X_j^2 + Y_j^2
= \sum_{i \in [n] \setminus J} \|V_J(R_i)_J - u_i\|_2^2.
\end{equation}
Here
$$
V = \mattwo{x^\tran}{y^\tran} \in \R^{2 \times N}
$$
is a fixed matrix, $R_i^\tran$ denotes the $i$-th row of $R$, and $u_i \in \R^2$ are fixed vectors.

Note that at this time we have three different ways to represent a complex vector $z \in \C^N$:
the usual way $z = x + iy$, as a long real vector $\real{z} = \vectwo{x}{y} \in \R^{2N}$,
and as a $2 \times N$ real matrix $V$ as above.

All $(R_i)_J$ in \eqref{eq: Bz lower sum} are independent real random vectors with all independent coordinates.
We can now apply Theorem~\ref{thm: SBP LCD} in dimension $m=2$ and for $L = 4/\sqrt{p}$.
It yields
\begin{equation}							\label{eq: SBP VRj preliminary}
\Pr{ \|V_J(R_i)_J - u_i\|_2 \le t} \le \frac{C}{\det(V_J V_J^\tran)^{1/2}} \Big( t + \frac{1}{D_2(V_J)} \Big)^2, \quad t \ge 0.
\end{equation}
Here we use the notation $D_2(V_J)$ to emphasize that the least common denominator used in this application
of Theorem~\ref{thm: SBP LCD} is for $2 \times |J|$ matrices, as opposed to the one for vectors which we will focus on later.

\subsection{Disregarding the arithmetic structure}

The small ball probability bound \eqref{eq: SBP VRj preliminary} relies on two different qualities of $z$.
First, the arithmetic structure of $z$ is reflected in the least
common denominator $D_2(V_j)$. Second, the correlation between real and imaginary parts of $z_J$
is measured by the term $\det(V_J V_J^\tran)^{1/2}$.

In this particular place of the argument, we may essentially disregard the arithmetic structure of $z$.
One can get rid of $D_2(V_J)$ using Proposition~\ref{prop: LCD lower simple}, which states that
\begin{equation}							\label{eq: D2 via Linfty}
D_2(V_J) \ge \frac{1}{2\|V_J\|_\infty}.
\end{equation}
To bound $\|V_J\|_\infty$, let us introduce a set of small coordinates as follows.

\begin{definition}[Small coordinates]					\label{def: small}
  Fix $\d \in (0,1)$ and let $z \in \C^N$. We will denote by $\sm(z)$ the set of indices of
  all except the $\d N$ largest (in the absolute value) coordinates of $z$.
  If some of the coordinates of $z$ are equal, the ties are broken arbitrarily.
\end{definition}

If $z$ is a unit vector in $\C^N$ and $J$ is a subset of $\sm(z)$, a simple application of Markov's inequality yields
$\|z_J\|_\infty \le \frac{1}{\sqrt{\delta N}}$. Moreover, by definition of $V$, we have
$\|V_J\|_\infty = \|z_J\|_\infty$. Thus
$$
\|V_J\|_\infty \le \frac{1}{\sqrt{\d N}}.
$$
Substituting this into \eqref{eq: D2 via Linfty}, we conclude that
\begin{equation}							\label{eq: D2 large}
D_2(V_J) \ge \frac{1}{2} \sqrt{\d N}.
\end{equation}
This crude estimate leads to the appearance of the term $1/\sqrt{\e N}$ in Theorem \ref{thm: distance general}. One can probably remove this term by involving the arithmetic structure. However, this would come at  a price of a significant increase of the complexity of the argument, so we did not pursue this direction.

\subsection{Quantifying the real-imaginary correlation}

The determinant $\det(V_J V_J^\tran)^{1/2}$ measures the correlation between real
and imaginary parts of $z_I$. For example, if the real and imaginary parts are equal to each other,
then the determinant vanishes, and the small ball probability bound \eqref{eq: SBP VRj preliminary}
becomes useless.

To make the bound as strong as possible, one would choose the subset $J$ so that,
on the one hand, it lies in $\sm(z)$ to ensure \eqref{eq: D2 large}, and on the other hand,
the determinant $\det(V_J V_J^\tran)^{1/2}$ is maximized. This motivates the following definition.

\begin{definition}[Real-complex correlation]
  For $z \in \C^N$ and $\d \in (0,1)$, we define
  $$
  d(z) = \max \left\{ \det(V_J V_J^\tran)^{1/2} : \; J \subset \sm(z), \; |J| = \d N \right\}.
  $$
\end{definition}

Clearly, $d(z) \in (0,1)$ for any unit vector $z$.

Choosing $J$ that achieves the maximum in the definition of $d(z)$ and using
the bound \eqref{eq: D2 large}, we conclude from \eqref{eq: SBP VRj preliminary}
that
$$
\Pr{ \|V_J(R_i)_J - u_i\|_2 \le t} \le \frac{C}{d(z)} \Big( t + \frac{1}{\sqrt{\d N}} \Big)^2, \quad t \ge 0.
$$
Substituting this into \eqref{eq: Bz lower sum} and using Tensorization Lemma~\ref{lem: tensorization},
we obtain the following result.

\begin{theorem}[Small ball probabilities via real-imaginary correlation]			\label{thm: SBP via correlation}
  Let $B \in \C^{n \times N}$ be a random matrix satisfying Assumptions~\ref{A} and \ref{A: general distribution},
  and let $\d \in (0,1)$. Then for a fixed vector $z \in \C^N$ with $\|z\|_2 = 1$ we have
  $$
  \Pr{\|Bz\|_2 \le t \sqrt{n}}
  \le \left[ \frac{C}{d(z)} \Big( t + \frac{1}{\sqrt{\d n}} \Big)^2 \right]^{(1-\d)n}, \quad t \ge 0.
  $$
\end{theorem}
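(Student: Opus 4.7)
The plan is to combine the decomposition from Section~\ref{s: Bz as a sum} with the small-ball estimate of Theorem~\ref{thm: SBP LCD} in dimension $m=2$, the crude LCD lower bound of Proposition~\ref{prop: LCD lower simple}, and finally a block version of Tensorization Lemma~\ref{lem: tensorization} over $(1-\delta)n$ independent rows.

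First I would write $z = x+iy$ and $B = R + iT$ with $T$ fixed, so that $Bz = (Rx-Ty) + i(Ry+Tx)$ and, row by row,
\[
|(Bz)_i|^2 \;=\; \|V R_i^\tran - w_i\|_2^2 \qquad (i=1,\ldots,n),
\]
where $V = \smallmattwo{x^\tran}{y^\tran} \in \R^{2\times N}$, $R_i^\tran$ is the $i$-th row of $R$, and $w_i \in \R^2$ is a fixed vector depending only on $T$, $x$, $y$.

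Next I would choose a column set $J$ attaining the maximum defining $d(z)$: $J \subset \sm(z)$, $|J| = \delta n$ and $\det(V_J V_J^\tran)^{1/2} = d(z)$. Splitting $V R_i^\tran = V_J (R_i)_J + V_{J^c}(R_i)_{J^c}$ and conditioning on $R_{[n]\times J^c}$ absorbs the second piece into a new fixed $u_i \in \R^2$. To guarantee independence I would then discard the rows $i \in J$: by Assumption~\ref{A} the only dependence is the $R_{ij} \leftrightarrow R_{ji}$ coupling, and restricting rows to $[n]\setminus J$ and columns to $J$ excludes both members of every such pair. This leaves
\[
\|Bz\|_2^2 \;\ge\; \sum_{i \in [n]\setminus J} \|V_J(R_i)_J - u_i\|_2^2,
\]
a sum of $(1-\delta)n$ squared norms of mutually independent $\R^2$-valued random vectors.

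For each fixed $i$, I would apply Theorem~\ref{thm: SBP LCD} with $m=2$ and $L = 4/\sqrt{p}$ (so that $L \ge \sqrt{16/p}$) to $V_J (R_i)_J$, whose coordinates are i.i.d.\ copies of a real random variable satisfying~\eqref{eq: xi}. This yields
\[
\Pr{\|V_J(R_i)_J - u_i\|_2 \le t} \;\le\; \frac{C}{d(z)} \Big( t + \frac{1}{D_2(V_J)} \Big)^{\!2}.
\]
To eliminate the $D_2(V_J)$ term I would invoke Proposition~\ref{prop: LCD lower simple}, which gives $D_2(V_J) \ge 1/(2\|V_J\|_\infty)$. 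Since $z$ is a unit vector and $J \subset \sm(z)$, a simple pigeonhole argument forces $|z_j| \le 1/\sqrt{\delta n}$ for every $j \in J$, whence $\|V_J\|_\infty = \|z_J\|_\infty \le 1/\sqrt{\delta n}$ and $D_2(V_J) \ge \tfrac12 \sqrt{\delta n}$.

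Finally I would plug the resulting per-row estimate $\frac{C}{d(z)} \bigl(t + 1/\sqrt{\delta n}\bigr)^2$ into a block version of Lemma~\ref{lem: tensorization} applied to the $(1-\delta)n$ independent $\R^2$-valued random vectors $W_i := V_J(R_i)_J - u_i$: repeating the Markov-plus-MGF argument of that lemma, with $\E \exp(-\|W_i\|_2^2/t^2)$ bounded by integrating the tail estimate just derived, delivers
\[
\Pr{\|Bz\|_2 \le t\sqrt{n}} \;\le\; \Big[ \frac{C}{d(z)} \Big( t + \frac{1}{\sqrt{\delta n}} \Big)^{\!2} \Big]^{(1-\delta)n},
\]
as required. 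The main obstacle is purely bookkeeping: one must choose $J$ so that simultaneously (i) $J \subset \sm(z)$, (ii) $J$ maximises $\det(V_J V_J^\tran)$, and (iii) dropping rows $i \in J$ still leaves $(1-\delta)n$ independent rows — this is precisely where the symmetric-pair clause of Assumption~\ref{A} is doing real work — and one must verify that the rudimentary $D_2$ bound $\tfrac12 \sqrt{\delta n}$ is strong enough to reproduce the exact $1/\sqrt{\delta n}$ term in the final estimate.
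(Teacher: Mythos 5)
Your proposal follows the paper's argument step for step: the row-wise identity $|(Bz)_i|^2 = \|V R_i^\tran - w_i\|_2^2$, restriction of $V$ to a maximizing $J\subset\sm(z)$, the application of Theorem~\ref{thm: SBP LCD} with $m=2$ and $L=4/\sqrt p$, the removal of $D_2(V_J)$ via Proposition~\ref{prop: LCD lower simple} together with $\|z_J\|_\infty\le 1/\sqrt{\d n}$, and a tensorization step. You are also right, and more explicit than the paper, that Lemma~\ref{lem: tensorization} as stated handles a linear bound $\LL(Z_j,t)\le M(t+t_0)$, whereas here the per-row estimate is quadratic in $(t+t_0)$; re-running the MGF argument with that quadratic input is exactly what is needed and gives $[CM(t+t_0)^2]^{(1-\d)n}$.

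One imprecision to fix. You condition on $R_{[n]\times J^c}$, the full set of columns outside $J$. That is more than the argument needs, and it breaks the independence you then invoke: if $j\in J\cap[n]$ and $i\in[n]\setminus J$, the entry $R_{ji}$ is in $R_{[n]\times J^c}$ (row $j\in[n]$, column $i\in J^c$) and may, by the symmetric-pair clause of Assumption~\ref{A}, be coupled to $R_{ij}$ with $i\in[n]\setminus J$, $j\in J$ --- an entry whose unconditional randomness you rely on in the block $([n]\setminus J)\times J$. The paper avoids this by conditioning only on $R_{([n]\setminus J)\times J^c}$, which is all that is required to absorb $V_{J^c}(R_i)_{J^c}$ into $u_i$ for $i\in[n]\setminus J$; with this smaller conditioning no coupled pair has one member conditioned on and the other used for randomness. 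The fix is purely notational here, but the order of operations matters: decide which rows you keep first, and only then condition on the complementary columns within those rows. With that correction, your proof coincides with the paper's.
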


\subsection{The essentially real case}

Theorem~\ref{thm: SBP via correlation} is useful for vectors $z$ whose real-imaginary correlations $d(z)$
are not too small.
We wonder what could be done in the ``essentially real'' case where $d(z)$ happens to be small?

Our strategy will be different in that case.
Let us first prove a version of Theorem~\ref{thm: SBP via correlation}
that is not based on $d(z)$, but where $t$ is understandably exponential
in $(1-\d)n$ rather than $2(1-\d)n$. Such probability bound will hold for incompressible vectors $z$
(which were introduced Definition~\ref{def: Comp Incomp}), and it will be stronger than the simpler
but more general bound of Proposition~\ref{prop: Bz simple}.

\begin{theorem}[Small ball probabilities for general incompressible vectors]  \label{thm: SBP no correlation}
  Let $B \in \C^{n \times N}$ be a random matrix satisfying Assumptions~\ref{A} and \ref{A: general distribution},
  and let $\d \in (0,1)$. Then for a fixed vector $z \in \Incomp$ we have
  $$
  \Pr{\|Bz\|_2 \le t \sqrt{n}}
  \le \left[ \frac{C}{\sqrt{\d}} \Big( t + \frac{1}{\sqrt{\d n}} \Big) \right]^{(1-\d)n}, \quad t \ge 0.
  $$
\end{theorem}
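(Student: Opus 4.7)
The plan is to follow the same three-step scheme used to prove Theorem~\ref{thm: SBP via correlation} --- decompose $\|Bz\|_2^2$ as in Section~\ref{s: Bz as a sum}, apply a Littlewood--Offord estimate to a single factor, and then tensorize --- but to use only \emph{one} of the real or imaginary parts of $z$ rather than both. Doing so halves the effective exponent from $2(1-\d)n$ to $(1-\d)n$, and in exchange removes the need to control the real--imaginary correlation $d(z)$: incompressibility of $z$ alone will suffice.

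Write $z = x+iy$. The first step is a standard spread lemma, deduced directly from Definition~\ref{def: Comp Incomp}: choose $c_4$ large enough that the set $\{j:\ |z_j| > c_4/\sqrt{N}\}$ has fewer than $c_0 N/2$ indices (Markov), apply incompressibility against the normalized restriction of $z$ to this set, and extract $J^{\ast} \subset [N]$ with $|J^{\ast}| \ge c_2 N$ and $c_3/\sqrt{N} \le |z_j| \le c_4/\sqrt{N}$ for every $j \in J^{\ast}$. Since $|z_j|^2 = x_j^2 + y_j^2$, at least one of $x_{J^{\ast}}$ or $y_{J^{\ast}}$ has $\ell_2$-norm bounded below by an absolute constant; after replacing $z$ by $iz$ if necessary, take this to be $x_{J^{\ast}}$. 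Within $J^{\ast}$, select $J$ of size $\lfloor\d n\rfloor$ by picking the indices with largest $|x_j|$; a simple averaging inequality together with the uniform upper bound on $J^{\ast}$ then yields
\[
\|x_J\|_2 \ge c_5 \sqrt{\d} \qquad \text{and} \qquad \|x_J\|_\infty \le c_4/\sqrt{N}.
\]

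Following the decomposition of Section~\ref{s: Bz as a sum} with this $J$ and retaining only the $X_i^2$ terms,
\[
\|Bz\|_2^2 \ \ge\ \sum_{i \in [n]\setminus J} X_i^2, \qquad X_i = \sum_{j \in J} R_{ij}\, x_j \ +\ a_i',
\]
where, after the usual conditioning, the $R_{ij}$ appearing here are i.i.d.\ real random variables obeying \eqref{eq: xi} and the $a_i'$ are deterministic. Proposition~\ref{prop: LCD lower simple} gives $D(x_J, L) \ge 1/(2\|x_J\|_\infty) \ge c_6 \sqrt{N}$, and Corollary~\ref{cor: SBP sums} yields, for each $i$,
\[
\LL(X_i, t)\ \le\ \frac{CL}{\|x_J\|_2} \Big( t + \frac{1}{D(x_J,L)} \Big)\ \le\ \frac{C'}{\sqrt{\d}} \Big( t + \frac{1}{\sqrt{\d n}} \Big).
\]
Tensorization (Lemma~\ref{lem: tensorization}) across the $(1-\d)n$ independent indices $i \in [n]\setminus J$ then delivers the claimed bound. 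The main technical point is the extraction of $J$: securing a single subset that simultaneously controls $\|x_J\|_\infty$ from above (to lower-bound the LCD) and $\|x_J\|_2$ from below (to produce the $1/\sqrt{\d}$ prefactor); once both properties are in hand, the rest of the argument is a direct assembly of tools already in place.
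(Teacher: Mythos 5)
Your proposal is correct and follows essentially the same route as the paper's proof: extract a subset $J$ on which $x_J$ is spread (an $\ell_\infty$ upper bound for the LCD lower bound via Proposition~\ref{prop: LCD lower simple}, and an $\ell_2$ lower bound of order $\sqrt{\d}$ for the prefactor), keep only the $X_i$ terms from the decomposition of Section~\ref{s: Bz as a sum}, bound each term with Corollary~\ref{cor: SBP sums}, and tensorize via Lemma~\ref{lem: tensorization}. The only cosmetic difference is in how $J$ is produced — you run the standard two-sided spread-set lemma on $z$ and then take the top $|x_j|$ indices, while the paper works directly with $\sm(z)$ and extracts $J$ by a block pigeonhole argument — but both yield $\|x_J\|_\infty \lesssim 1/\sqrt{N}$ and $\|x_J\|_2 \gtrsim \sqrt{\d}$, after which the two arguments coincide.
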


\begin{proof}
The argument is somewhat simpler than for Theorem~\ref{thm: SBP via correlation}.
Consider the set of small coordinates $\sm(z)$ introduced in Definition~\ref{def: small}.
By definition of that set combined with Markov's inequality, and Definition~\ref{def: Comp Incomp} of incompressible vectors,
we have
$$
\|z_{\sm(z)}\|_\infty \le \frac{1}{\sqrt{\d N}}, \quad \|z_{\sm(z)}\|_2 \ge c.
$$
It follows that there exists a subset $J \subset \sm(z)$ with $|J| = \d N$ and such that
\begin{equation}         \label{eq: zJ}
\|z_J\|_\infty \le \frac{1}{\sqrt{\d N}}, \quad \|z_J\|_2 \ge c \sqrt{\d}.
\end{equation}
(The first inequality is trivial, and the second can be obtained by dividing $\sm(z)$ into
$1/\d-1$ blocks of coordinates of size $\d N$ each, and arguing by contradiction.)

Since $z_J = x_J + i y_J$, either the real part $x_J$ or complex part $y_J$ has $\ell_2$-norm bounded below by
$c\sqrt{\d}/2$.
Let us assume without loss of generality that $x_J$ satisfies this, so
\begin{equation}         \label{eq: xJ}
\|x_J\|_\infty \le \frac{1}{\sqrt{\d N}}, \quad \|x_J\|_2 \ge c' \sqrt{\d}.
\end{equation}

To control $\|Bz\|_2$, we can proceed similarly to the proof of Proposition~\ref{prop: Bz simple},
taking as the starting point the bound
\begin{equation}         \label{eq: Bz Xj again}
\|Bz\|_2^2
\ge \sum_{i \in [n] \setminus J} X_j^2
\quad \text{where} \quad
X_j = \sum_{j \in J} R_{ij} x_j + a'_j.
\end{equation}
For each sum defining $X_j$, we can apply the small ball probability bound of Corollary~\ref{cor: SBP sums}
with $L = \sqrt{8/p}$. This gives
$$
\Pr{|X_j| \le t }
\le \frac{C}{\|x_J\|_2} \Big( t + \frac{1}{D(x_J)} \Big).
$$
We can use the two inequalities in \eqref{eq: xJ} to get rid of the two terms dependent on $x_J$.
Indeed, Proposition~\ref{prop: LCD lower simple} and the first inequality in \eqref{eq: xJ} yield
$$
D(x_J) \ge \frac{1}{2} \sqrt{\d N}.
$$
Using this and the second inequality in \eqref{eq: xJ} gives
$$
\Pr{ |X_j| \le t }
\le \frac{C}{\sqrt{\d}} \Big( t + \frac{1}{\sqrt{\d n}} \Big).
$$
Using this bound for each term of the sum in \eqref{eq: Bz Xj again}
and applying Tensorization Lemma~\ref{lem: tensorization}, we complete the proof.
\end{proof}

Next, we will show that for vectors with small $d(z)$, not only a $\d N$ fraction of coordinates but
almost the entire real and imaginary parts are close to each other. This strong constraint
intuitively means that the set of such vectors is relatively small, and we will indeed construct
a small net for such vectors later.

\begin{lemma}[Real-imaginary correlation]			\label{lem: real-imaginary correlation}
   Let $z \in \C^N$ and set $I := \sm(z)$.
   Then
   $$
   \det(V_I V_I^\tran)^{1/2} \le \frac{C d(z)}{\d}.
   $$
\end{lemma}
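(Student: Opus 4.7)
The plan is to reduce everything to pairwise $2\times 2$ minors via Cauchy--Binet, and then exploit a simple averaging argument over random subsets $J \subset I$ of size $\delta N$.

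First, since $V \in \R^{2 \times N}$ has only two rows, the Cauchy--Binet formula gives, for any $J' \subset [N]$ with $|J'| \ge 2$,
\[
\det(V_{J'} V_{J'}^\tran) = \sum_{\{j,k\} \subset J'} \det\begin{bmatrix} x_j & x_k \\ y_j & y_k \end{bmatrix}^2 = \sum_{\{j,k\} \subset J'} p_{jk}^2,
\]
where $p_{jk} := x_j y_k - x_k y_j$. Applying this with $J' = I$ and $J' = J$ (for any candidate $J \subset I$ of size $\delta N$) writes both $\det(V_I V_I^\tran)$ and $\det(V_J V_J^\tran)$ as sums of the same nonnegative quantities $p_{jk}^2$, restricted to the respective pair sets.

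Next, I would perform an averaging argument. Let $J$ be drawn uniformly at random among all subsets of $I$ of size $\delta N$ (with $|I| = (1-\delta)N$). For any fixed pair $\{j,k\} \subset I$,
\[
\Pr{\{j,k\} \subset J} = \frac{\binom{|I|-2}{\delta N - 2}}{\binom{|I|}{\delta N}} = \frac{\delta N(\delta N - 1)}{(1-\delta)N\bigl((1-\delta)N - 1\bigr)} \ge c\,\delta^2,
\]
provided $\delta N \ge 2$ (which is safe in the regime of interest, since $\delta = \e \ge 2/n$; otherwise the lemma is vacuous because $d(z)$ and $\det(V_I V_I^\tran)$ both collapse to zero). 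By linearity of expectation,
\[
\E \det(V_J V_J^\tran) = \sum_{\{j,k\} \subset I} \Pr{\{j,k\} \subset J} \, p_{jk}^2 \ge c\,\delta^2 \det(V_I V_I^\tran).
\]

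Finally, since the maximum of a random variable dominates its mean, there exists some $J \subset \sm(z)$ of size $\delta N$ with $\det(V_J V_J^\tran) \ge c\,\delta^2 \det(V_I V_I^\tran)$. By the definition of $d(z)$ we therefore get $d(z)^2 \ge c\,\delta^2 \det(V_I V_I^\tran)$, and taking square roots yields $\det(V_I V_I^\tran)^{1/2} \le C d(z)/\delta$, as required. There is no serious obstacle here: the only slightly delicate point is the degenerate regime $\delta N < 2$, which is handled by noting both sides of the claimed inequality vanish (or are trivially bounded) in that case.
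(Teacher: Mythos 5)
Your proposal is correct and follows essentially the same route as the paper: both expand $\det(V_{J'}V_{J'}^\tran)$ via Cauchy--Binet into a sum of squared $2\times 2$ minors, and both compare the full sum over pairs in $I$ to the sum over pairs inside a size-$\delta N$ set $J$ by a double-counting/averaging step (the paper phrases this as summing $\det(V_J V_J^\tran)$ over all such $J$ and counting how often each pair $I_2$ appears, namely $\binom{N_0-2}{\delta N-2}$ times; you phrase it probabilistically via $\E_J$, which is the same computation after normalizing by $\binom{N_0}{\delta N}$). One small side remark: the degenerate threshold is governed by the chosen $\delta$, which in the paper's application is $\delta = c\sqrt{\e}$ rather than $\delta=\e$, but this does not affect the argument.
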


\begin{proof}
The argument is based on Cauchy-Binet formula, which yields
\begin{equation}         \label{eq: det VI}
\det(V_I V_I^\tran) = \sum_{I_2 \subset I, \, |I_2|=2} \det(V_{I_2})^2,
\end{equation}
where the sum is over all $\binom{|I|}{2}$ two-element subsets of $I$.
Similarly, for each set $J$ as in the definition of $d(z)$, that is for $J \subset I$, $|J|=\d N$,
we can expand
$$
\det(V_J V_J^\tran) = \sum_{I_2 \subset J, \, |I_2|=2} \det(V_{I_2})^2.
$$
Summing over $J$, we get
$$
\sum_{J \subset I, \, |J| = \d N} \det(V_J V_J^\tran)
= \sum_{J \subset I, \, |J| = \d N} \sum_{I_2 \subset J, \, |I_2|=2} \det(V_{I_2})^2.
$$
To simplify the right hand side, note that every two-element set $I_2 \subset I$ is included
in $\binom{N_0}{\d N - 2}$ sets $J$, where we denote $N_0 := |I| = N - \d N$. Therefore
$$
\sum_{J \subset I, \, |J| = \d N} \det(V_J V_J^\tran)
= \binom{N_0}{\d N - 2} \sum_{I_2 \subset I, \, |I_2|=2} \det(V_{I_2})^2.
$$
The sum in the right hand side equals $\det(V_I V_I^\tran)$ by \eqref{eq: det VI}.
Each determinant $\det(V_J V_J^\tran)$ in the left hand side is bounded by $d(z)^2$ by definition.
This yields
$$
\binom{N_0}{\d N} d(z)^2 \ge \binom{N_0}{\d N - 2} \det(V_I V_I^\tran).
$$
Simplifying this inequality and using that $N_0 = N - \d N$, we complete the proof.
\end{proof}

\section{A net for vectors with given LCD and real-imaginary correlations}		\label{s: net}
Thanks to Section \ref{s: kernels incompressible}, we can now focus on the set of incompressible vectors.
Our goal is to construct a net for the set of incompressible vectors $z$ with given least common
denominator $D(\real{z})$ and real-imaginary correlation $d(z)$. Let us define this set formally.

\begin{definition}[Level set for LCD and real-imaginary correlations]			\label{def: level set}
  For $D, d > 0$, we define with the following subset of $\C^N$:
  $$
  S_{D,d} = \left\{ z \in \Incomp: \; D/2 < D(\real{z}) \le D; \; d(z) \le d \right\}.
  $$
\end{definition}

Hidden in this definition are the parameters $L$ from the definition of $D(\real{z})$ and $\d$ from the definition of $d(z)$,
which we assume to be fixed. When we work with level sets $S_{D,d}$, we can automatically assume that
\begin{equation}         \label{eq: D lower simple}
  D \ge c_0 \sqrt{N},
\end{equation}
since it is relatively easy to see that $D(v) \ge c_0 \sqrt{N}$ for every vector $v \in \Incomp$;
see \cite{V symmetric}.

\medskip

A first attempt at constructing a small $\gamma$-net of the level set $S_{D,d}$ could be to use
the standard volume argument. For instance, if one chooses $\gamma = \sqrt{N} / D$,
the volume argument will yield a net of
cardinality
\begin{equation}							\label{eq: trivial net}
\Big( \frac{D}{\sqrt{N}} \Big)^{2N}.
\end{equation}
The exponent $2N$ appears here because the vectors are complex.

This net is too large for our purposes. Our next, refined, attempt is to leverage the
information about LCD of the vectors in $S_{D,d}$. Indeed, known constructions
lead to the existence of a finer net, namely with $\gamma \ll \sqrt{N}/D$, and still with
approximately the same cardinality as in \eqref{eq: trivial net}, see \cite{RV rectangular}.

However, this net would still be too large if we try to use it in combination with the
small ball probability bound given in Theorem~\ref{thm: SBP via correlation}. Our final,
successful, refinement of the construction will use both LCD and the real-imaginary correlation $d(z)$
of the vectors in $S_{D,d}$. Ideally, we would hope to construct a $\gamma$-net
with $\gamma \ll \sqrt{N}/D$ and with cardinality bounded by
\begin{equation}         \label{eq: ideal net C}
\Big( \frac{D}{\sqrt{N}} \Big)^{2N} d^N.
\end{equation}
The correction term $d^N$ will allow the net to become smaller for more ``real'' vectors --
those with stronger real-imaginary correlations.

Of course, if $d$ is extremely small, such as for purely real vectors, the cardinality
in \eqref{eq: ideal net C} is too good to be true. For purely real vectors, the ideal cardinality
would be the same as in \eqref{eq: trivial net} except with exponent $N$, that is
\begin{equation}         \label{eq: ideal net R}
\Big( \frac{D}{\sqrt{N}} \Big)^N.
\end{equation}

Summarizing, we hope to construct a $\gamma$-net of the level set $S_{D,d}$
for some $\gamma \ll \sqrt{N}/D$, and with cardinality bounded as in \eqref{eq: ideal net R}
if $d$ is not too small (the {\em genuinely complex} case)
as in \eqref{eq: ideal net C}
if $d$ is very small (the {\em essentially real} case).
The following theorem, which is the main result of this section, provides slightly weaker
but still adequate bounds.

\begin{theorem}[Nets for level sets]			\label{thm: nets for level sets}
  There exist constants  $C,c, \bar{c}>0$ such that the following holds.
  Assume that $L$ from the definition of $D(\real{z})$ is such that
  $L \le \bar{c}\sqrt{N}$,
  and $\d$ from the definition of $d(z)$ is such that $\d \in (0,c)$.
  Fix $D>0$, and let
  \begin{equation}							\label{eq: gamma d0}
  \gamma = \frac{L}{D} \sqrt{\log_+ \frac{D}{L}}
  \quad \text{and} \quad
  d_0 = C \d \cdot \max \Big( \gamma, \frac{\sqrt{N}}{D} \Big).
  \end{equation}
  \begin{enumerate}[1.]
    \item (Genuinely complex case). For any $d \ge d_0$,
      there exists a $(C\gamma)$-net of the level set $S_{D,d}$ with cardinality at most
      $$
      \d^{-N} \gamma^{-2 \d N - 1} \Big( \frac{CD}{\sqrt{N}} \Big)^{2N - \d N} d^{N - \d N - 1}.
      $$
    \item (Essentially real case). For any $d \le d_0$,
      there exists a $(C\gamma)$-net of the level set $S_{D,d}$ with cardinality at most
      $$
      \d^{-\d N} \gamma^{-2 \d N - 1} \Big( \frac{CD}{\sqrt{N}} \Big)^{N - \d N + 1}.
      $$
  \end{enumerate}
\end{theorem}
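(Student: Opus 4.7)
The plan is to parametrize vectors in $S_{D,d}$ by nearby integer lattice points via the LCD, and then use the correlation bound $d(z)\le d$ to sharply constrain the geometry of those lattice points.

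For any $z\in S_{D,d}$, Definition~\ref{def: LCD} yields some $\theta\in(D/2,D]$ and $p\in\Z^{2N}$ with $\|\theta\real z - p\|_2 < L\sqrt{\log_+(\theta/L)}=:\eta$. Since $\|\real z\|_2=1$, it follows that $\|p\|_2\in[\theta-\eta,\theta+\eta]$, and a short triangle-inequality calculation gives $\|\real z - p/\|p\|_2\|_2\lesssim \eta/D\asymp \gamma$. Hence the normalized lattice points $\{p/\|p\|_2\}$ form a $(C\gamma)$-net of $S_{D,d}$, and the only remaining task is to count admissible $p$. Without structural constraints, this is bounded by the number of lattice points in the ball of radius $D+\eta$ in $\R^{2N}$, giving the trivial estimate $(CD/\sqrt N)^{2N}$ as in \eqref{eq: trivial net}.

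To sharpen this, I will use Lemma~\ref{lem: real-imaginary correlation}: the bound $d(z)\le d$ yields $\det(V_I V_I^{\tran})^{1/2}\lesssim d/\delta$ for $I:=\sm(z)$, so $x_I$ and $y_I$ are nearly proportional inside $\R^{|I|}$. Writing $p=(p^x,p^y)\in \Z^N\times\Z^N$ and rescaling by $\theta\asymp D$, this forces $(p^x_I,p^y_I)$ to lie in a tube of width $O(dD/\delta)$ around a two-dimensional subspace of $\R^{2|I|}$. Counting then proceeds as follows: (i) choose $I$ in at most $\binom{N}{\delta N}\le(e/\delta)^{\delta N}$ ways; (ii) choose $p^x_I\in\Z^{|I|}\cap B(0,D)$, contributing at most $(CD/\sqrt N)^{(1-\delta)N}$ points; (iii) choose $p^y_I$ in the tube around $\R p^x_I$, contributing at most $(D/\|p^x_I\|_2)\cdot(CdD/(\delta\sqrt N))^{(1-\delta)N-1}$ points; (iv) choose the unconstrained block $(p^x_{I^c},p^y_{I^c})\in\Z^{2\delta N}\cap B(0,D)$, contributing at most $(CD/\sqrt{\delta N})^{2\delta N}$ points. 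An additional angular $\gamma$-net inside the two-dimensional tube (to resolve the direction of $p^x_I$ beyond what the integer grid alone provides) furnishes the $\gamma^{-2\delta N-1}$ factor. Multiplying yields the genuinely complex bound.

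When $d\le d_0 = C\delta\max(\gamma,\sqrt N/D)$ the tube is so thin that its $(|I|-1)$-dimensional cross section contains only $O(1)$ lattice points; equivalently, $p^y_I$ is determined by $p^x_I$ up to a scalar multiple (itself taking $O(\gamma^{-1})$ values). This collapses the dimension count for the $I$-block from $2|I|$ down to $|I|+1$, yielding the essentially real bound. The threshold $d_0$ is precisely where the two counting regimes meet: either the cross-sectional radius $dD/\delta$ drops below the natural lattice spacing $\sqrt{|I|}\asymp \sqrt N$, or it drops below the angular resolution $\gamma\|p^x_I\|_2$ imposed by the LCD net. The main obstacle is managing three superimposed covering arguments simultaneously --- the LCD-based net for $\real z$, the lattice count inside a two-dimensional tube, and the angular net inside that tube --- while keeping the exponents of $\delta$, $\gamma$, $D/\sqrt N$, and $d$ tight in both regimes. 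A secondary subtlety is that the set $\sm(z)$ is not stable under the approximation $z\mapsto p/\|p\|_2$, so one must either enlarge the net to account for all nearby choices of $I$ or verify, via the incompressibility of $z$, that a single $I$ suffices for a whole neighborhood; the parameters in \eqref{eq: gamma d0} are calibrated so that the loss from this enlargement is absorbed into the factor $(CD/\sqrt N)^{-\delta N}$ already present in the stated bounds.
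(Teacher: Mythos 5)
Your conceptual outline matches the paper's strategy (use the LCD to approximate $\real{z}$ by scaled lattice points, use the correlation bound via Lemma~\ref{lem: real-imaginary correlation} to confine the lattice point to a thin tube, count, and union over the choice of $\sm(z)$), but the counting as you have laid it out does not produce the stated bounds, and two of your steps are inconsistent with each other. First, having counted the lattice points $(p^x_{I^c}, p^y_{I^c}) \in \Z^{2\delta N}\cap B(0,D)$ in step (iv), there is nothing left for an ``angular $\gamma$-net inside the tube'' to resolve---once $p$ is fixed the direction $p/\|p\|_2$ is determined---so your (iv) and the angular-net factor are double-counting the same degrees of freedom, and neither of them actually accounts for the $\gamma^{-2\d N-1}$ factor in the claimed bound. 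In the paper's construction, $\gamma^{-2\d N}$ arises from a $\gamma$-net of the unit ball in $\C^{\d N}$ for the coordinates \emph{outside} $I$ (those coordinates are not usefully constrained by the LCD, which only pins down small coordinates, and a volumetric $\gamma$-net of $B_{\C^{\d N}}(0,1)$ is exactly the right object), and the remaining $\gamma^{-1}$ comes from discretizing the single scalar multiplier $\a$ in the net $\{\a(p+iq)\}$. You avoid the explicit multiplier by normalizing $p/\|p\|_2$, which is fine as a parametrization, but then neither the $\gamma^{-1}$ nor the $\gamma^{-2\d N}$ factor has a source in your argument.

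Second, your count in (iii) for $p^y_I$ has the wrong axial factor: the number of integer points in the cylinder of radius $\sim dD/\d$ around the line $\R p^x_I$, intersected with $B(0,3D)$, has $\sim D$ choices along the axis (lattice spacing $\sim 1$, length $\sim D$), not $D/\|p^x_I\|_2 \sim 1$. You appear to be counting the scalar $\beta$ in $p^y_I \approx \beta p^x_I$ rather than the integer point itself, but the lattice is in $p^y_I$-space, not $\beta$-space. Finally, in the essentially real case the paper actually splits into two sub-cases depending on which of the two inequalities defining $d \ge d_0$ fails: in one, the tube cross-section is so thin that its integer-point count degenerates to $O(1)$ per axial step (your description), and in the other, $q$ is forced to lie within distance $\lesssim L\sqrt{\log_+(D/L)}$ of the line $\R p$ and one passes to a complex scalar multiplier $(\a+i\beta)p$ instead of counting integer $q$ at all. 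Your summary conflates the two. The high-level plan is right, but you need to redo the bookkeeping: one $\gamma$-net for $z_{I^c}$, a discretization of the scalar multiplier, a lattice count for $p$ in $\Z^I\cap B(0,3D)$, and a lattice count for $q$ in the cylinder $\CC(p,3D,2CD\nu)$, with the essentially real case treated via the two sub-cases.
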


To compare this result with the ideal bounds \eqref{eq: ideal net C} and \eqref{eq: ideal net R},
let us use it with $L \ll \sqrt{N}$. Then $\gamma \ll \sqrt{N}/D$ as we needed, and
the theorem gives bounds similar to \eqref{eq: ideal net C} and \eqref{eq: ideal net R}.

\medskip

We will prove Theorem~\ref{thm: nets for level sets} in the next few subsections.

\subsection{Step 1: setting out the constraints}
We will first construct a net for the points in $S_{D,d}$ with given set of small coordinates;
in the end we unfix this set using the union bound. So let us fix a subset of indices
$$
I \subset [N] \quad \text{with} \quad |I| = N - \d N =: N_0
$$
and define the following subset of $\C^I$:
\begin{equation}         \label{eq: SDdI}
S_{D,d,I} := \left\{ z_I: z \in S_{D,d}, \, \sm(z) = I \right\}.
\end{equation}

Consider a point $z_I \in S_{D,d,I}$. As before, depending on the situation, we will work
with one of the three representations of $z_I$: via real and imaginary parts $z_I = x_I + iy_I$,
via a long real vector $\real{z_I} = \vectwo{x_I}{y_I} \in \R^{2N_0}$, and via the $2 \times N_0$ real matrix
$V_I = \smallmattwo{x_I^\tran}{y_I^\tran}$.

Juxtaposing the available constraints on $z_I$ will help us to construct a small net,
so let us set out precisely what we know about $z_I$. We have three pieces of information.

\medskip

{\em 1. Norm.} We know that $\|z\|_2 = 1$ and $z \in \Incomp$.
Since $|I^c| = \d N \le cN$, the coordinates of $z$ in $I$ must have a significant energy, i.e.
$$
\|z_I\|_2 \ge c.
$$
Since $\|z_I\|_2^2 = \|x_I\|_2^2 + \|y_I\|_2^2$, at least one of these terms is bounded below by $c^2/2$.
Let us assume without loss of generality that it is the first term, which yields
\begin{equation}         \label{eq: constraints norm}
\frac{c}{2} \le \|x_I\|_2 \le 1, \qquad \|y_I\|_2 \le 1.
\end{equation}

\medskip

{\em 2. Least common denominator.}
We know that $D(\real{z}) \in (D/2,D]$. By definition, this implies that there exists $\theta \in [D/2,D]$
and integer points $p,q \in \Z^I$ such that
\begin{equation}         \label{eq: constraints LCD}
\|\theta x_I - p\|_2 \le L \sqrt{\log_+ \frac{\theta}{L}}, \qquad \|\theta y_I - q\|_2 \le L \sqrt{\log_+ \frac{\theta}{L}}.
\end{equation}

\medskip

{\em 3. Real-imaginary correlation.}
We know that $d(z) \le d$. By Lemma~\ref{lem: real-imaginary correlation}, this implies that
$$
\det(V_I V_I^\tran)^{1/2} \le \frac{C d}{\d}.
$$
On the other hand, the determinant is the product of the singular values, that is
$$
\det(V_I V_I^\tran)^{1/2} = s_1(V_I) \, s_2(V_I).
$$
The larger singular value $s_1(V_I)$ is the operator norm of $\|V_I\|$, so it is bounded below by the
norm of either of the two rows of $V_I$. Thus $s_1(V) \ge \|x_I\|_2 \ge c/2$ due to \eqref{eq: constraints norm}.
This gives
\begin{equation}         \label{eq: constraints correlation}
s_2(V_I) \le C' \nu, \quad \text{where} \quad \nu := \frac{C'd}{\d}.
\end{equation}

\subsection{Step 2: an attempt at construction based on LCD}			\label{s: attempt LCD}

Let us ignore for a moment the information about real-imaginary correlation,
and try to construct a net for $S_{D,d,I}$ based on the least common denominator only.
Dividing the inequalities in \eqref{eq: constraints LCD} by $\theta$ and
using that $\theta \ge D/2$, we obtain
\begin{equation}         \label{eq: x approximated}
\Big\| x_I - \frac{p}{\theta} \Big\|_2
\le \frac{L}{\theta} \sqrt{\log_+ \frac{\theta}{L}}
\le \frac{2L}{D} \sqrt{\log_+ \frac{D}{L}}
= 2\gamma,
\end{equation}
and similarly
\begin{equation}         \label{eq: y approximated}
\Big\| y_I - \frac{q}{\theta} \Big\|_2 \le 2\gamma.
\end{equation}
This means that $x_I$ and $y_I$ can be approximated by scaled integer points $p/\theta$
and $q/\theta$, respectively.

To count the integer points $p$ and $q$, let us check their norms.
By triangle inequality, \eqref{eq: constraints LCD} implies
\begin{equation}         \label{eq: p upper}
\|p\|_2 \le \|\theta x_I\|_2 + L \sqrt{\log_+ \frac{\theta}{L}}
\le 2D + L \sqrt{\log_+ \frac{2D}{L}}
\le 3D,
\end{equation}
where we used that $\|x_I\|_2 \le 1$ and $\theta \le D$.

Notice that the bound \eqref{eq: p upper} is sharp within
an absolute constant. Indeed, a similar reasoning gives
$$
\|p\|_2 \ge \|\theta x_I\|_2 - L \sqrt{\log_+ \frac{\theta}{L}}
\ge cD - L \sqrt{\log_+ \frac{D}{L}} \ge c'D.
$$
In the second inequality we used that $\|x_I\|_2 \ge c/2$ due to \eqref{eq: constraints norm}
and $\theta \ge D/2$. In the last inequality, we used that
$D \ge c_0 \sqrt{N}$ due to \eqref{eq: D lower simple} and that $L \le \bar{c} \sqrt{N}$,
choosing $\bar{c}<c_0$ in the formulation of the theorem, which ensures that the term $cD$ dominates.

Similarly to \eqref{eq: p upper}, we obtain
$$
\|q\|_2 \le 3D.
$$
Summarizing, we have shown that $z_I$ can be approximated by a scaled integer point $p+iq$,
where both $p$ and $q$ have norms at most $4D$. Formally, the set
$$
\NN_I := \left\{ \a (p+iq): \; \a \in \R; \; p, q \in \Z^I \cap B(0, 3D) \right\}
$$
is a $(4\gamma)$-net of $S_{D,d,I}$.
How large is this net? Since $D \ge c_0 \sqrt{N}$ due to \eqref{eq: D lower simple},
a standard volume argument shows that the number of integer points in
the real ball $B(0,3D)$ in dimension $|I| = N_0$ is bounded by $(CD/\sqrt{N_0})^{N_0}$.
Thus the number of ``generators'' $p+iq$ of the net $\NN_I$ is bounded by
\begin{equation}         \label{eq: first attempt at net}
\Big( \frac{CD}{\sqrt{N_0}} \Big)^{2N_0}.
\end{equation}
Further, one can easily discretize the multipliers $\a$ (we will do this later),
and obtain a finite net of $S_{D,d,I}$ of cardinality similar to \eqref{eq: first attempt at net}.

The bound \eqref{eq: first attempt at net} is close to the ideal result \eqref{eq: ideal net C}.
However, it misses the $d^N$ factor, which is understandable since we have not used
the real-imaginary correlation $d(z)$ yet.
Let us do this now.

\subsection{Step 3: factoring in the real-imaginary correlation}

Let us rewrite the approximation bound \eqref{eq: constraints LCD} in terms of the
$2 \times N_0$ matrices
$$
V_I = \mattwo{x_I^\tran}{y_I^\tran} \quad \text{and} \quad
W := \mattwo{p^\tran}{q^\tran}.
$$
It follows that $\theta V_I$ is approximated by $W$ in the operator norm:
$$
\|\theta V_I - W\| \le 2 L \sqrt{\log_+ \frac{\theta}{L}}.
$$
Weyl's inequality implies that the corresponding singular values of $\theta V_I$ and $W$
are within $2 L \sqrt{\log_+ \frac{\theta}{L}}$ from each other, and in particular we have
$$
s_2(W) \le s_2(\theta V_I) + 2 L \sqrt{\log_+ \frac{\theta}{L}}
$$
Recalling from \eqref{eq: constraints correlation} that $s_2(V_I) \le C'\nu$ and
that $\theta \le D$, we conclude that
\begin{equation}         \label{eq: s2W}
s_2(W) \le C D \nu + 2 L \sqrt{\log_+ \frac{D}{L}}.
\end{equation}

We can interpret this inequality as saying that the vectors $p$ and $q$ that form the rows of $W$
are almost collinear. Indeed, let $P_{p^\perp}$ denote the orthogonal projection in $\R^I$ onto the subspace
orthogonal to the vector $p$. We claim that
\begin{equation}         \label{eq: almost collinear}
\|P_{p^\perp} q\|_2 \le \Big( 1 + \frac{\|q\|_2}{\|p\|_2} \Big) \, s_2(W).
\end{equation}
To see why this inequality holds, we can express the determinant $\det(W W^\tran)^{1/2}$
in two ways -- via the base times height formula and as the product of singular values:
\begin{equation}         \label{eq: det two ways}
\det(W W^\tran)^{1/2} = \|p\|_2 \cdot \|P_{p^\perp} q\|_2 = s_1(W) \, s_2(W).
\end{equation}
The larger singular value $s_1(W)$ is the operator norm of $W$, which is bounded by the sum of the
norms of the rows:
$$
s_1(W) \le \|p\|_2 + \|q\|_2.
$$
Substituting this into the identity \eqref{eq: det two ways}, we obtain the bound \eqref{eq: almost collinear}.

To successfully apply the bound \eqref{eq: almost collinear}, we recall from Section~\ref{s: attempt LCD}
that $\|q\|_2 \le 3D$ and $\|p\|_2 \ge c'D$, and moreover $s_2(W)$ is bounded as in \eqref{eq: s2W}.
Thus we obtain
\begin{equation}         \label{eq: p q collinear}
\|P_{p^\perp} q\|_2 \le C \Big( D \nu + L \sqrt{\log_+ \frac{D}{L}} \Big).
\end{equation}
Intuitively, this means that $p$ and $q$ are almost collinear, with the degree of collinearity
measured by the real-imaginary correlation factor $d(z)$ (which is reflected here through $\nu = C'd/\delta$).

\subsection{Step 4: construction of the net in the genuinely complex case}

We are now ready to construct a net of $S_{D,d,I}$ based on both LCD and the real-imaginary
correlation. Let us start with the genuinely complex case of the theorem, where
$d \ge d_0$.
Using definitions of $\d_0$ and $\gamma$ in \eqref{eq: gamma d0} and
recalling that $\nu = C'd/\d$, we can rewrite the inequality $d \ge d_0$ as
\begin{equation}         \label{eq: genuinely complex}
D \nu \ge C L \sqrt{\log_+ \frac{D}{L}}
\quad \text{and} \quad
D \nu \ge C \sqrt{N}.
\end{equation}
By the first inequality, the first term dominates in the bound \eqref{eq: p q collinear}, and we have
\begin{equation}         \label{eq: collinearity}
\|P_{p^\perp} q\|_2 \le 2 C D \nu.
\end{equation}

Arguing as in Section~\ref{s: attempt LCD}, we see that the set
$$
\NN_I^{(1)} := \left\{ \a (p+iq): \; \a \in \R; \; p, q \in \Z^I \cap B(0, 3D); \; \|P_{p^\perp} q\|_2 \le 2 C D \nu \right\}
$$
is a $(4\gamma)$-net of $S_{D,d,I}$.
The collinearity condition \eqref{eq: collinearity} included in this definition will allow us to bound the number of
generators $p+iq$ better than before.

First, exactly as in Section~\ref{s: attempt LCD}, the number of possible integer points $p$ in the
definition of $\NN_I^{(1)}$ can be bounded by the standard volume argument, and we have
\begin{equation}         \label{eq: number of p}
\#\left\{ p \text{'s in the definition of } \NN_I^{(1)} \right\}
\le \big| \Z^I \cap B(0, 3D) \big|
\le \Big( \frac{CD}{\sqrt{N_0}} \Big)^{N_0}.
\end{equation}
Next, for a fixed $p$, let us count the number of possible $q$'s that can make a generator
$p+iq$. By definition of $\NN_I^{(1)}$, any such $q$ is an integer point in the cylinder
\begin{equation}							\label{eq: cylinder}
\CC(p, 3D, 2CD\nu)
\end{equation}
where we denote
$$
\CC(p,a,b) =: \left\{ u \in \R^I : \|P_p u\|_2 \le a, \; \|P_{p^\perp} u\|_2 \le b \right\}.
$$
(Here obviously $P_p$ denotes the orthogonal projection in $\R^I$ onto the line
spanned by $p$.)

By a standard covering argument, the number of integer points in the cylinder $\CC(p, a, b)$
is bounded by the volume of the Minkowski sum
$$
\CC(p, a, b) + Q \quad \text{where} \quad Q = \Big[-\frac{1}{2}, \frac{1}{2} \Big]^{N_0}.
$$
Further, the unit cube $Q$ is contained in the Euclidean ball $B(0,\sqrt{N_0})$,
and the Minkowski sum $\CC(p, a, b) + B(0,\sqrt{N_0})$ is clearly contained in the
cylinder
$$
\CC(p, a+\sqrt{N_0}, b+\sqrt{N_0}).
$$
This cylinder is a Cartesian product
of an interval of length $2(a+\sqrt{N_0})$ and the Euclidean ball of radius $b+\sqrt{N_0}$
in the hyperplane orthogonal to the interval, so the volume of the cylinder can be bounded by
\begin{equation}							\label{eq: integer points a b}
2(a+\sqrt{N_0}) \cdot \Big( \frac{C(b+\sqrt{N_0})}{\sqrt{N_0-1}} \Big)^{N_0 - 1}.
\end{equation}

We can apply this bound to our specific cylinder \eqref{eq: cylinder} where $a=3D$ and $b=2CD\nu$.
By \eqref{eq: D lower simple}, $a \ge 4c \sqrt{N_0}$,
and by the second inequality in \eqref{eq: genuinely complex}, $b \ge C \sqrt{N_0}$,
which means that both $\sqrt{N_0}$ terms can be absorbed into $a$ and $b$.
Thus the number of integer points in the cylinder \eqref{eq: cylinder}
is bounded by
\begin{equation}         \label{eq: integer points in cylinder}
Ca \Big( \frac{Cb}{\sqrt{N_0}} \Big)^{N_0-1}
\le CD \Big( \frac{CD\nu}{\sqrt{N_0}} \Big)^{N_0-1}.
\end{equation}

Summarizing, now we know the following about the generators $p+iq$ of the net $\NN_I^{(1)}$.
The number of possible points $p$ is bounded as in \eqref{eq: number of p}.
For each fixed $p$, the number of possible $q$'s that can make the generator $p+iq$
is bounded by the quantity in \eqref{eq: integer points in cylinder}. Thus, the total number of generators
$p+iq$ is bounded by
\begin{equation}         \label{eq: generators genuinely complex}
\Big( \frac{CD}{\sqrt{N_0}} \Big)^{N_0} CD \Big( \frac{CD\nu}{\sqrt{N_0}} \Big)^{N_0-1}.
\end{equation}

\subsection{Step 5: finalizing the genuinely complex case}			\label{s: finalizing complex case}

Three minor points still remain to be addressed in this case.
First, the net $\NN_I^{(1)}$ we constructed
is infinite due to the real multiplier $\alpha$.
Second, this net controls only the coordinates that lie in $I$
(recall the definition \eqref{eq: SDdI} of the set $S_{D,d,I}$).
Third, the construction we made was for a fixed set of coordinates $I$.
We will now take care of these issues.

\subsubsection{Discretizing the multipliers}
The first point can be addressed by discretizing the set of multipliers $\alpha$ in the
definition of the net $\NN_I^{(1)}$. Since
$S_{D,d,I}$ is a subset of the unit ball $B(0,1)$,
we may consider only the multipliers $\alpha$ that satisfy $\|\a(p+iq)\|_2 \le 1$.
For a fixed generator $p+iq$,
we discretize the interval of multipliers $\{ \alpha \in \R: \|\a(p+iq)\|_2 \le 1\}$
by replacing it with a set of $2/\gamma$ numbers $\alpha_j$ that are equally spaced
in that interval. The vector $\a(p+iq)$ can then be approximated by a vector
$\a_i(p+iq)$ with error at most $\gamma$ in the Euclidean norm.

Since $\NN_I^{(1)}$ is a $(4\gamma)$-net of $S_{D,d,I}$, the discretization we just
constructed is a $(6\gamma)$-net of $S_{D,d,I}$. Let us call this net $\MM_I^{(1)}$.
The cardinality of $\MM_I^{(1)}$ is bounded by $(2/\gamma)$
times the number of generators of $\NN_I^{(1)}$, which we bounded in \eqref{eq: generators genuinely complex}.
In other words,
\begin{equation}         \label{eq: MM}
|\MM_I^{(1)}|
\le \frac{2}{\gamma} \cdot \Big( \frac{CD}{\sqrt{N_0}} \Big)^{N_0} CD \Big( \frac{CD\nu}{\sqrt{N_0}} \Big)^{N_0-1}.
\end{equation}

\subsubsection{Controlling the coordinates outside $I$}
The second point we need to address is that since $\MM_I^{(1)}$ is a $(6\gamma)$-net of the set
$S_{D,d,I} = \left\{ z_I: z \in S_{D,d}, \, \sm(z) = I \right\}$, this net can only control the coordinates of $z$
in $I$. To control the coordinates outside $I$, it is enough to construct a separate $\gamma$-net for the set
$$
\left\{ z_{I^c}: z \in S_\C^{N-1} \right\}.
$$
Since $|I^c| = \d N$, a standard volume bound allows one to find such a net of cardinality at
most $(5/\gamma)^{2\d N}$. Combining the two nets, we conclude that there exists a $(7\gamma)$-net
of the set
$$
T_{D,d,I} = \left\{ z: z \in S_{D,d}, \, \sm(z) = I \right\}
$$
of cardinality at most
$$
\Big( \frac{5}{\gamma} \Big)^{2\d N} |\MM_I^{(1)}|.
$$

\subsubsection{Unfixing the set of coordinates $I$}

Finally, the third point we need to address is that our construction was for a fixed set of indices $I$.
To unfix $I$, we note that there is at most
$$
\binom{N}{N-|I|} = \binom{N}{\d N} \le \Big( \frac{e}{\delta} \Big)^{\d N}
$$
ways to choose $I$. So, combining the nets we constructed for each $I$, we obtain a $(7\gamma)$-net
of $S_{D,d}$ of cardinality at most
$$
\Big( \frac{e}{\delta} \Big)^{\d N} \cdot \Big( \frac{5}{\gamma} \Big)^{2\d N} |\MM_I^{(1)}|.
$$
Substituting here the bound \eqref{eq: MM} for $|\MM_I^{(1)}|$,
recalling that $N_0 = N -\d N$ and $\nu = C'd/\d$,
and simplifying the expression, we prove the first part of the theorem.

\subsection{Step 5: the essentially real case}

We proceed to the essentially real case, where $d < d_0$.
This means that at least one of the inequalities in \eqref{eq: genuinely complex} fails.

\subsubsection{Case 1} Assume that the first inequality in \eqref{eq: genuinely complex} holds
but the other fails, that is
$$
D \nu \ge C L \sqrt{\log_+ \frac{D}{L}}
\quad \text{and} \quad
D \nu \le C \sqrt{N}.
$$
We proceed in the same way as in the genuinely complex case until we apply the general
bound on the integer points \eqref{eq: integer points a b} to our cylinder with $a = 4D$ and
$b = 2CD\nu$. This is the only place where we used the second inequality in \eqref{eq: genuinely complex},
which now fails. This means that $b$ gets absorbed into the $\sqrt{N_0}$ term, and the
the number of integer points in the cylinder \eqref{eq: cylinder}
is consequently bounded by
$$
Ca \Big( \frac{C\sqrt{N_0}}{\sqrt{N_0}} \Big)^{N_0-1}
\le C^N D.
$$
Using this bound in place of \eqref{eq: integer points in cylinder} and arguing
exactly as in the genuinely complex case, we complete the proof for this sub-case.

\subsubsection{Case 2} The remaining sub-case is where the first inequality in \eqref{eq: genuinely complex}
fails, that is
\begin{equation}         \label{eq: essentially real}
D \nu < L \sqrt{\log_+ \frac{D}{L}}.
\end{equation}
Then the second term dominates in the bound \eqref{eq: p q collinear}, and we have
\begin{equation}         \label{eq: collinearity real}
\|P_{p^\perp} q\|_2 \le 2 C L \sqrt{\log_+ \frac{D}{L}}.
\end{equation}

Let us fix a points $z_I = x_I+iy_I$ from $S_{D,d,I}$.
Since the orthogonal projection has norm one, \eqref{eq: y approximated}
yields
$$
\Big\| P_{p^\perp} \Big( y_I - \frac{q}{\theta} \Big) \Big\|_2 \le \gamma.
$$
Combining this with \eqref{eq: collinearity real} and using triangle inequality, we obtain
$$
\|P_{p^\perp} y_I\|_2 \le \gamma + \frac{2 C L}{\theta} \sqrt{\log_+ \frac{D}{L}}.
$$
Recalling that $\theta \ge D$ and the definition of $\gamma$ in the theorem, we obtain
$$
\|P_{p^\perp} y_I\|_2 \le C\gamma.
$$

We can interpret this inequality as follows. There exists a multiplier $\beta \in \R$ such that
$$
\|y_I - \beta p\|_2 \le C\gamma.
$$
Let us rewrite \eqref{eq: x approximated} in a similar way -- there exists a multiplier $\alpha = 1/\theta \in \R$
such that
$$
\|x_I - \alpha p\|_2 \le \gamma.
$$
Recalling that $z_I = x_I + iy_I$, it follows that
$$
\|z_I - (\alpha+i\beta) p\|_2 \le C\gamma.
$$
Furthermore, recalling from \eqref{eq: p upper} that $\|p\|_2 \le 3D$, we conclude that the set
$$
\NN_I^{(2)} := \left\{ (\alpha+i\beta) p: \; \alpha,\beta \in \R; \; p \in \Z^I \cap B(0, 3D) \right\}
$$
is a $(C\gamma)$-net of $S_{D,d,I}$.

The number of generators $p$ can be counted by a volume argument, exactly as in \eqref{eq: number of p}:
$$
\#\left\{ p \text{'s in the definition of } \NN_I^{(2)} \right\}
\le \big| \Z^I \cap B(0, 3D) \big|
\le \Big( \frac{CD}{\sqrt{N_0}} \Big)^{N_0}.
$$
Finally, we can discretize the multipliers $\a+i\beta$ and unfix the set $I$ similarly to how we did it
in Section~\ref{s: finalizing complex case}. We obtain a $(6\gamma)$-net of $S_{D,d}$
of cardinality at most
\begin{equation}							\label{eq: number of p real}
\Big( \frac{e}{\delta} \Big)^{\d N} \cdot \Big( \frac{5}{\gamma} \Big)^{2\d N}
\cdot \Big( \frac{C}{\gamma} \Big)^2 \cdot \Big( \frac{CD}{\sqrt{N_0}} \Big)^{N_0}.
\end{equation}
(To recall, the first term here comes from unfixing $I$, the second from controlling coordinates outside $I$,
the third from discretizing the multipliers in the complex disc $\{ \a+i\beta \in \C: \|(\alpha+i\beta) p\|_2 \le 1 \}$,
and fourth from \eqref{eq: number of p real}.)

Recalling that $N_0 = N -\d N$ and $\nu = C'd/\d$,
and simplifying the expression, we prove the second part of Theorem~\ref{thm: nets for level sets}.
\qed

\section{Structure of kernels, and proof of Theorem~\ref{thm: distance general} on the distances}	\label{s: distance proof}
In this section we will show that random subspaces, and specifically the kernels of random matrices,
are arithmetically unstructured, which means that they have large LCD with high probability.
The following is the main result of this section.

\begin{theorem}[Kernels of random matrices are unstructured]				\label{thm: kernel unstructured}
  Let $B \in \C^{n \times N}$ be a random matrix satisfying Assumptions~\ref{A} and \ref{A: general distribution},
  and assume that $n = (1-\e) N$ for some $\e \in (2/n,c)$.
  Set $L := \sqrt{\e N}$.
  Then,
  $$
  \Pr{D(\real{\ker B}, L) \le \min \left( \sqrt{N} e^{c/\sqrt{\e}}, \; \e N \right) \ \textrm{and} \ \BB_{B,M} } \le e^{-cN}.
  $$
\end{theorem}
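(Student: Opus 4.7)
The plan is a net-and-union-bound argument that reduces this structural statement about $\ker B$ to the small-ball estimates of Section~\ref{s: SBP via correlations} combined with the net constructions of Section~\ref{s: net}. The guiding idea is that if some $z \in \ker B \cap S_\C^{N-1}$ had $D(\real z, L)$ below $D_\ast := \min(\sqrt{N}e^{c/\sqrt{\e}}, \e N)$, then any net-approximant $z_0$ of $z$ in the corresponding level set $S_{D,d}$ would force $\|Bz_0\|_2$ to be small, which happens with low probability once the net is fine enough relative to $\|B\|$.

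First I would invoke Proposition~\ref{prop: invertibility on Comp} to assume, outside an event of probability $e^{-cN}$, that $\ker B \cap S_\C^{N-1} \subseteq \Incomp$. Using Lemma~\ref{lem: real new} together with the fact that $z \mapsto \real z$ is an isometry, the inequality $D(\real{\ker B}, L) \le D_\ast$ becomes the existence of a unit incompressible $z \in \ker B$ with $D(\real z, L) \le D_\ast$. The general lower bound $D(\real v, L) \ge c_0 \sqrt{N}$ on $\Incomp$ noted at the start of Section~\ref{s: net} restricts $D$ to $[c_0\sqrt{N}, D_\ast]$, while $d = d(z)$ lies in $(0, C]$. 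A dyadic discretisation of both parameters introduces only a polylogarithmic overhead, harmless against $e^{-cN}$. Thus it suffices to show, for each dyadic pair $(D, d)$, that
$$
\Pr{\ker B \cap S_{D,d} \ne \emptyset \text{ and } \BB_{B,M}} \le e^{-c_1 N}.
$$

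For a fixed pair $(D,d)$, I would take a $(C\gamma)$-net $\NN$ of $S_{D,d}$ furnished by Theorem~\ref{thm: nets for level sets}, with $\gamma = (L/D)\sqrt{\log_+(D/L)}$. If $z \in \ker B \cap S_{D,d}$ and $z_0 \in \NN$ satisfies $\|z-z_0\|_2 \le C\gamma$, then on $\BB_{B,M}$,
$$
\|Bz_0\|_2 = \|B(z_0-z)\|_2 \le \|B\|\,\|z-z_0\|_2 \le CM\gamma\sqrt{N},
$$
and a union bound yields
$$
\Pr{\ker B \cap S_{D,d} \ne \emptyset \text{ and } \BB_{B,M}} \le |\NN| \cdot \sup_{z_0 \in \NN}\Pr{\|Bz_0\|_2 \le CM\gamma\sqrt{N}}.
$$
The probability on the right is bounded by Theorem~\ref{thm: SBP via correlation} when $d \ge d_0$ (genuinely complex regime) and by Theorem~\ref{thm: SBP no correlation} when $d < d_0$ (essentially real regime), matching the two cases of Theorem~\ref{thm: nets for level sets}.

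The remaining and decisive task is the matching arithmetic. In each of the two regimes the net cardinality and the small-ball bound were built precisely so that their product exhibits a substantial cancellation of the $(D/\sqrt{N})$ powers against the $(L/D)$ powers hidden in $\gamma$ (and, in the genuinely complex case, of the $d$ powers as well), leaving a surviving factor essentially of the form $[C(\e\log_+(D/L))^{\beta}]^{c N}$ times benign $\d$- and $d$-dependent prefactors, for a suitable exponent $\beta>0$. This is $\le e^{-c_1 N}$ precisely for $D$ in the stated range: the cap $D \le \sqrt{N}e^{c/\sqrt{\e}}$ controls the logarithmic factor $\log_+(D/L)$, while the secondary cap $D \le \e N$ ensures that the approximation error $\|B\|\cdot C\gamma$ remains below the scale on which the small-ball gain is meaningful. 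The parameter $\d$ will be fixed as a sufficiently small absolute constant so that all $\d$-dependent prefactors (including the $1/\sqrt{\d n}$ correction in Theorem~\ref{thm: SBP via correlation} and the $1/\sqrt{\d}$ factor in Theorem~\ref{thm: SBP no correlation}) are absorbed into the exponential decay. The hard part will be executing this bookkeeping cleanly across both regimes and the full dyadic range of $D$, with special care at the regime boundary $d = d_0$ and in the logarithmic term in $\gamma$; the explicit tailoring of the nets of Section~\ref{s: net} to the small-ball bounds of Section~\ref{s: SBP via correlations} is what makes the balance close.
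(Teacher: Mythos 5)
Your overall strategy matches the paper's: restrict to incompressible vectors via Proposition~\ref{prop: invertibility on Comp}, stratify the candidate $z$ by $D(\real z)$ and $d(z)$, take a net from Theorem~\ref{thm: nets for level sets}, intersect with the small-ball estimates of Section~\ref{s: SBP via correlations}, and union over dyadic parameter values. The paper packages the per-level-set statement as a separate proposition (Proposition~\ref{prop: kernels and level sets}), but that is only an organizational difference.

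There is, however, one concrete gap: your claim that ``the parameter $\d$ will be fixed as a sufficiently small absolute constant'' is wrong, and it is not a mere bookkeeping slip. The cancellation between the net cardinality and the small-ball gain is only tight when $\d$ scales with $\e$: the paper needs $2\e \le \d \le \sqrt{\e}$ and ultimately sets $\d = c\sqrt{\e}$. To see why an absolute $\d$ fails, note that the net cardinality carries a factor $\gamma^{-2\d N}$ with $1/\gamma \le \bar D := D/L$, and $\bar D$ can be as large as $\e^{-1/2}e^{c/\sqrt{\e}}$ when $D$ is near the cap $\sqrt{N}e^{c/\sqrt{\e}}$. With $\d$ an absolute constant this factor is of size $e^{c\d N/\sqrt{\e}}$, which blows up as $\e \to 0$ and cannot be absorbed by the small-ball decay $\sim e^{-cN}$. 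The restriction $\d \le \sqrt{\e}$ is exactly what tames this, and the complementary restriction $\d \gtrsim \sqrt{\e}$ is forced when closing the balance in Step~4 of the paper's argument (the inequality $\bar D^{4\d}\log_+\bar D \le c\d/\e$ requires $\d^2 \gtrsim \e$). A secondary, smaller point: in the essentially real regime you propose a dyadic decomposition over $d$ as well, but $d(z)$ can be arbitrarily small (indeed zero for purely real $z$), so the dyadic sum over $d$ would not terminate; the paper instead treats the entire sublevel set $S_{D,d_0}$ in one stroke, which is why the essentially real net in Theorem~\ref{thm: nets for level sets} carries no $d$-dependence.
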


This theorem will follow by balancing the two  forces -- the small ball probability estimates
of Section~\ref{s: SBP via correlations} and the net for vectors with given LCD of Section~\ref{s: net}.

It would be convenient to first state a preliminary version of Theorem~\ref{thm: kernel unstructured}
which holds for vectors with given levels of LCD $D(\real{z})$ and real-imaginary correlation $d(z)$.
We will work here with somewhat smaller level sets than $S_{D,d}$ from Section~\ref{s: net}.
For $D,d>0$, we consider
$$
\bar{S}_{D,d} = \left\{ z \in \Incomp: \; D/2 < D(\real{z}) \le D; \; d/2 \le d(z) \le d \right\}.
$$
Clearly, $S_{D,d}$ is the union of the sets $\bar{S}_{D,d}$ for all $d \le d_0$.

\begin{proposition}[Kernels and level sets]				\label{prop: kernels and level sets}
  Let $B \in \C^{n \times N}$ be a random matrix satisfying Assumptions~\ref{A} and \ref{A: general distribution},
  and let $n = (1-\e) N$ for some $\e \in (2/n,c)$.
  Set $L$ from the definition of $D(\real{z})$ to be $L := \sqrt{\e N}$.
  Let
  $$
  D \le \min \left( \sqrt{N} e^{c/\sqrt{\e}}, \; \e N \right)
  $$
  and let $d_0$ be the threshold value from \eqref{eq: gamma d0} for $\d = c \sqrt{\e}$.
  \begin{enumerate}[1.]
    \item (Genuinely complex case). For any $d \in [d_0,1]$, we have
      \begin{equation}         \label{eq: kernels and level sets C}
      \Pr{ \bar{S}_{D,d} \cap \ker B \ne \emptyset  \ \textrm{and} \ \BB_{B,M} } \le e^{-N}.
      \end{equation}
    \item (Essentially real case). For any $d \in [0, d_0]$, we have
      $$
      \Pr{ S_{D,d} \cap \ker B \ne \emptyset  \ \textrm{and} \ \BB_{B,M} } \le e^{-N}.
      $$
  \end{enumerate}
\end{proposition}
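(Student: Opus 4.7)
The plan is a covering-plus-union-bound argument. In both cases I choose $\d = c\sqrt{\e}$, set $L = \sqrt{\e N}$ and $\gamma = (L/D)\sqrt{\log_+(D/L)}$ as in Theorem~\ref{thm: nets for level sets}, and let $\NN$ be the net provided by that theorem (for $\bar{S}_{D,d}$ in the complex case, and for $S_{D,d}$ in the real case). If $z \in \ker B$ lies in the relevant level set and $\BB_{B,M}$ holds, then choosing $z_0 \in \NN$ with $\|z-z_0\|_2 \le C\gamma$ gives
\[
\|Bz_0\|_2 \le \|Bz\|_2 + \|B\|\cdot\|z-z_0\|_2 \le CM\gamma\sqrt{n}.
\]
Thus the bad event is contained in $\bigcup_{z_0 \in \NN} \{\|Bz_0\|_2 \le CM\gamma\sqrt{n}\} \cap \BB_{B,M}$, and the problem reduces to showing that the cardinality of $\NN$ times the pointwise small-ball probability is at most $e^{-N}$.

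In the genuinely complex case, any $z_0 \in \bar{S}_{D,d}$ satisfies $d(z_0) \ge d/2$, so Theorem~\ref{thm: SBP via correlation} with $t \asymp \gamma$ gives a pointwise bound of the form $[(C/d)(\gamma + 1/\sqrt{\d n})^2]^{(1-\d)n}$. Combining with the cardinality estimate from part~1 of Theorem~\ref{thm: nets for level sets}, the two powers of $d$ collapse into $d^{(1-\d)\e N - O(1)} \le 1$, and the remaining geometric product is
\[
\Big(\frac{CD}{\sqrt{N}}\Big)^{2N - \d N}\Big(\gamma + \frac{1}{\sqrt{\d n}}\Big)^{2(1-\d)n} \cdot \d^{-N} \gamma^{-2 \d N - 1}.
\]
Under the hypothesis $D \le \e N$ one checks $\gamma$ dominates $1/\sqrt{\d n}$, so the leading factor simplifies to $(CD\gamma/\sqrt{N})^{2N} = (C\sqrt{\e \log_+(D/L)})^{2N}$ up to lower-order corrections.

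In the essentially real case I bypass any information on $d(z_0)$ and instead apply Theorem~\ref{thm: SBP no correlation} to every $z_0 \in \Incomp$, obtaining $[(C/\sqrt{\d})(\gamma + 1/\sqrt{\d n})]^{(1-\d)n}$; combined with the much smaller net from part~2 of Theorem~\ref{thm: nets for level sets}, the balance reduces analogously to $(C\sqrt{\e \log_+(D/L)})^{N}$. Note that here it suffices to cover the full set $S_{D,d}$, since the pointwise bound does not require a lower estimate on $d(z_0)$.

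The main obstacle is the final arithmetic: I must verify that $\e \log_+(D/L) \le c'$ for a suitable absolute constant, so that $(C\sqrt{\e \log_+(D/L)})^{(1-\d)n} \le e^{-N}$. The hypothesis $D \le \sqrt{N} e^{c/\sqrt{\e}}$ gives $\log_+(D/L) \le c/\sqrt{\e} + \tfrac{1}{2} \log_+(1/\e)$, hence $\e \log_+(D/L) \le c\sqrt{\e} + \tfrac{1}{2}\e \log_+(1/\e)$, which is at most $c'$ once $\e$ is below an absolute threshold. The auxiliary polynomial factors $\d^{-N} = \e^{-N/4}$ and $\gamma^{-O(\d N)}$ contribute at most $\exp(O(\sqrt{\e}\log(1/\e))\cdot N)$, and are absorbed into the exponential decay. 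The second restriction $D \le \e N$ is used only to ensure $\gamma \gtrsim 1/\sqrt{\d n}$, so that the SBP and net bounds line up exponent-by-exponent without the spurious $1/\sqrt{\d n}$ term disrupting the balance.
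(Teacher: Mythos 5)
Your overall strategy is correct and matches the paper's: cover the level set by the net of Theorem~\ref{thm: nets for level sets}, apply the small ball probability bounds of Theorems~\ref{thm: SBP via correlation} and \ref{thm: SBP no correlation} at scale $t\asymp\gamma$, and take a union bound. Your reduction (if $z\in\ker B$ and $\BB_{B,M}$ holds, then $\|Bz_0\|_2\le CM\gamma\sqrt{n}$ for the nearest net point $z_0$) is slightly more streamlined than the paper's (which fixes a realization of $B$ for which $\inf_{z_0\in\NN}\|Bz_0\|_2>\lambda\sqrt{\d N}$ and then shows $\|Bz\|_2>0$ on the level set), but they are equivalent. The cancellation of the $d$-powers and the domination of $\gamma$ over $1/\sqrt{\d n}$ under $D\le \e N$ are both correctly identified.

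However, the final arithmetic has a genuine gap. You treat $\d^{-N}$ as a "lower-order correction contributing at most $\exp(O(\sqrt{\e}\log(1/\e))N)$." This is wrong. With $\d=c\sqrt{\e}$ one has $\d^{-N}=c^{-N}\e^{-N/2}$ (not $\e^{-N/4}$ as you wrote), which is $\exp(\tfrac{N}{2}\log(1/\e)+N\log(1/c))$ — a first-order term, growing far faster than $\exp(O(\sqrt{\e}\log(1/\e))N)$ as $\e\to 0$. It cannot be absorbed; it must be combined with the main factor. Doing so, $(C\sqrt{\e\log_+\bar D})^{2N}\cdot\d^{-N}=((C^2/c)\sqrt{\e}\log_+\bar D)^N$ (with $\bar D=D/L$), so the condition you actually need to verify is $\sqrt{\e}\,\log_+\bar D\le c'$, not $\e\log_+\bar D\le c'$. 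This distinction matters substantively: $\e\log_+\bar D\to 0$ as $\e\to 0$ regardless of the constant in the hypothesis $D\le\sqrt N e^{c/\sqrt\e}$, so your (incorrect) condition would be satisfied trivially; but $\sqrt{\e}\log_+\bar D\to c$ as $\e\to 0$, so the correct condition holds only if that constant $c$ is chosen small enough relative to the implied absolute constants. (The same issue recurs in the essentially real case, where the factor $(C/\sqrt\d)^{(1-\d)n}$ from Theorem~\ref{thm: SBP no correlation} produces an uncancelled $\d^{-N/2}$ in your formulation.) The paper handles this cleanly by parameterizing the SBP threshold as $t=\l\sqrt\d$, which makes the $\sqrt\d$-factors cancel exactly and reduces the bound to a clean inequality in $\l$, then checks that $\l$ can be chosen compatibly with the approximation step. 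Your $t\asymp\gamma$ choice is ultimately the same $\l$, but by committing to it early you lose the bookkeeping that keeps the $\d$-powers under control, and the final verification as you state it does not go through.
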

Note here that in the genuinely complex case, we use an additional stratification by $d(z)$ by considering the sets $\bar{S}_{D,d}$, while in the essentially real case, we treat the set $S_{D,d}$ in one strike.

We will prove this proposition in the next two subsections.

\subsection{Proof of Proposition~\ref{prop: kernels and level sets} in the genuinely
complex case}

\subsubsection{Step 1: combining the small ball probability with the net}

We fix a vector $z \in \bar{S}_{D,d}$ and apply Theorem~\ref{thm: SBP via correlation}.
By the assumptions on $n$, we can write the conclusion of this theorem as follows:
\begin{equation}         \label{eq: Bz SBP}
\Pr{\|Bz\|_2 \le t \sqrt{N}}
\le \left[ \frac{C}{d} \Big( t + \frac{1}{\sqrt{\d N}} \Big)^2 \right]^{(1-\d)(1-\e)N}, \quad t \ge 0.
\end{equation}
Let us apply this bound for $t := \l \sqrt{\d}$, where $\l \in (0,1)$ is a parameter
whose value we choose later. If we assume that
\begin{equation}         \label{eq: lambda restriction 1}
\l \sqrt{\d} \ge \frac{1}{\sqrt{\d N}},
\end{equation}
then the probability bound can be expressed as
$$
\Pr{\|Bz\|_2 \le \l \sqrt{\d N}}
\le \Big( \frac{C \l^2 \d}{d} \Big)^{(1-\d)(1-\e)N}.
$$

Next, Theorem~\ref{thm: nets for level sets} provides us with a $(C\gamma)$-net of $S_{D,d}$
of controlled cardinality. Clearly, the same is true for the smaller set $\bar{S}_{D,d}$. Let us
denote such a net by $\NN$. Using a union bound, we can combine the probability bound
with the net as follows:
$$
p:= \Pr{\inf_{z \in \NN} \|Bz\|_2 \le \l \sqrt{\d N}}
\le \Big( \frac{C \l^2 \d}{d} \Big)^{(1-\d)(1-\e)N} \cdot |\NN|.
$$
recalling the bound on $|\NN|$ given by Theorem~\ref{thm: nets for level sets}, we obtain
\begin{equation}         \label{eq: p long bound}
p \le \Big( \frac{C \l^2 \d}{d} \Big)^{(1-\d)(1-\e)N}
\cdot \d^{-N} \gamma^{-2 \d N - 1} \Big( \frac{CD}{\sqrt{N}} \Big)^{2N - \d N} d^{N - \d N - 1}.
\end{equation}
Our goal is to show that $p \le e^{-N}$.

\subsubsection{Step 2: simplifying the probability bound}

Assume that $\d$ (which we recall is a parameter from the definition of $d(z)$) is chosen such that
\begin{equation}         \label{eq: epsilon delta}
2\e \le \d \le \sqrt{\e}.
\end{equation}
We claim that $d$, $\d$ and $\gamma$ can be removed from \eqref{eq: p long bound}
at the cost of increasing the bound by $C^N$.

To see this for $d$, note that the exponent of $d$ in the bound \eqref{eq: p long bound} is
$$
(N - \d N - 1) - (1-\d)(1-\e)N = (1-\d) \e N - 1 \ge \e N / 2 - 1 \ge 0.
$$
Since $d \in [0,1]$, removing $d$ can only make the bound \eqref{eq: p long bound} larger.

Similarly, the exponent of $1/\d$ in the bound \eqref{eq: p long bound} is
$$
N - (1-\d)(1-\e)N \le 2 \d N.
$$
Thus $\d$ contributes to the bound a factor not larger than $\d^{-2\d N} \le C^N$.

Finally, to evaluate the contribution of $\gamma$, let us denote
$$
\bar{D} : = \frac{D}{L} = \frac{D}{\sqrt{\e N}}.
$$
Notice in passing that $\bar{D} \ge e$ since $D \ge c_0 \sqrt{N}$ by \eqref{eq: D lower simple}.
Recalling the definition of $\gamma$ in \eqref{eq: gamma d0}, we have
$$
\frac{1}{\g} = \frac{\bar{D}}{\sqrt{\log_+\bar{D}}} \le \bar{D}.
$$
Therefore, the contribution of $\g$ to the bound \eqref{eq: p long bound} is a factor
not larger than
$$
\g^{-2 \d N - 1} \le \bar{D}^{4 \d N}.
$$
To bound this quantity further, we can use the assumption on $D$ and the comparison \eqref{eq: epsilon delta},
which imply
\begin{equation}         \label{eq: Dbar small}
\bar{D} \le \frac{1}{\sqrt{\e}} \, e^{c/\sqrt{\e}}
\le \frac{1}{\d} \, e^{c/\d}.
\end{equation}
This implies that $\bar{D}^{4 \d N} \le C^N$.
Therefore, $\g$ contributes to the bound \eqref{eq: p long bound} a factor
not larger than $C^N$.

We have shown that $d$, $\d$ and $\gamma$ can be removed from \eqref{eq: p long bound}
at the cost of increasing the bound by $C^N$. In other words, we have
$$
p \le \left[ C \l^{2(1-\d)(1-\e)} \left( C \sqrt{\e} \bar{D} \right)^{2-\d} \right]^N.
$$

Solving for $\l$, we see that the desired bound
$$
p \le e^{-N}
$$
holds whenever
$$
\l \le \Big( \frac{c}{\sqrt{\e} \bar{D}} \Big)^{\frac{2-\d}{2(1-\e)(1-\d)}},
$$
which in turn holds if $\l$ is chosen so that
\begin{equation}         \label{eq: lambda restriction 2}
\l \le \Big( \frac{c}{\sqrt{\e} \bar{D}} \Big)^{1+2\d}.
\end{equation}

\subsubsection{Step 3: approximation by the net}

In the previous step we showed that the event
$$
\inf_{z_0 \in \NN} \|Bz_0\|_2 \le \l \sqrt{\d N}
$$
holds with probability at least $1-e^{-N}$, as long as the parameter $\l$ satisfies
\eqref{eq: lambda restriction 1} and \eqref{eq: lambda restriction 2}.
Let us fix a realization of the random matrix $B$ for which this event does hold.

Fix a vector $z \in \bar{S}_{D,d}$. To finish the proof of \eqref{eq: kernels and level sets C},
we need
to show that $\|Bz\|_2 > 0$. Let us choose $z_0 \in \NN$ which best approximates
the vector $z$; by definition of $\NN$ we have
$$
\|z-z_0\|_2 \le C\gamma = \frac{C \sqrt{\log_+ \bar{D}}}{\bar{D}}.
$$
Assume that the event $\BB_{B,M}$ occurs.
By triangle inequality, it follows that
\begin{align*}
\|Bz\|_2
  &\ge \|Bz_0\|_2 - \|B\| \cdot \|z-z_0\|_2 \\
  &\ge \l \sqrt{\d N} - M \sqrt{N} \cdot \frac{C \sqrt{\log_+ \bar{D}}}{\bar{D}}
\end{align*}
This quantity is positive as we desired if $\l$ satisfies
\begin{equation}         \label{eq: lambda restriction 3}
\l \ge \frac{C \sqrt{\log_+ \bar{D}}}{\sqrt{\d} \bar{D}}.
\end{equation}
Recall thatwe allow our constants to depend on $M$. This allows to absorb $M$ in $C$ in the inequality above.

\subsubsection{Step 4: final choice of the parameters}

We have shown that the conclusion \eqref{eq: kernels and level sets C}
of the proposition in the genuinely complex case holds if we can choose the parameters $\d$ and $\l$
in such a way that they satisfy \eqref{eq: epsilon delta}, \eqref{eq: lambda restriction 1},
\eqref{eq: lambda restriction 2} and \eqref{eq: lambda restriction 3}.
We will now check that such a choice indeed exists.

First, let us choose $\l$ just large enough to satisfy \eqref{eq: lambda restriction 3}; thus we set
$$
\l := \frac{C \sqrt{\log_+ \bar{D}}}{\sqrt{\d} \bar{D}}.
$$
To check \eqref{eq: lambda restriction 1}, we use the assumption that $D \le \e N$,
which implies that $\bar{D} \le \sqrt{\e N}$. This and the choice of $\l$ imply
$$
\l \sqrt{\d} \ge \frac{1}{\bar{D}} \ge \frac{1}{\sqrt{\e N}} \ge \frac{1}{\sqrt{\d N}},
$$
where the last inequality follows from \eqref{eq: epsilon delta}.
This proves \eqref{eq: lambda restriction 1}.

It remains to check \eqref{eq: lambda restriction 2}, which takes the form
$$
\frac{C \sqrt{\log_+ \bar{D}}}{\sqrt{\d} \bar{D}}
\le \Big( \frac{c}{\sqrt{\e} \bar{D}} \Big)^{1+2\d}.
$$
Rearranging the terms and dropping the term $(1/\sqrt{\e})^{2\d}$
which is smaller than an absolute constant due to \eqref{eq: epsilon delta},
we may rewrite this restriction as
$$
\bar{D}^{4\d} \log_+ \bar{D} \le \frac{c\d}{\e}.
$$
Substituting here the assumption $\bar{D} \le \frac{1}{\d} \, e^{c/\d}$ which we already
used in \eqref{eq: Dbar small}, we see that the restriction is satisfied if
$$
\frac{C}{\d} \le \frac{c\d}{\e}.
$$
It remains to choose $\d := c \sqrt{\e}$; then the restriction is satisfied,
and we have verified \eqref{eq: lambda restriction 2}.
This finished the proof of the proposition in the genuinely complex case.

\subsection{Proof of Proposition~\ref{prop: kernels and level sets} in the essentially
real case}

The argument is similar, and even simpler, than in the genuinely complex case.
We just need to use the appropriate small ball probability bound, namely
Theorem~\ref{thm: SBP no correlation} instead of \eqref{eq: Bz SBP},
and the corresponding bound on the net -- the one from the essentially real case
in Theorem~\ref{thm: nets for level sets}. This leads to the following variant
of \eqref{eq: p long bound}:
$$
p \le (C\l)^{(1-\d)(1-\e)N}
\cdot \d^{-\d N} \gamma^{-2 \d N - 1} \Big( \frac{CD}{\sqrt{N}} \Big)^{N - \d N + 1}.
$$

Like before, we can remove $\d$ and $\g$, simplifying the bound to
$$
p \le \left[ C \l^{(1-\d)(1-\e)} \left( C \sqrt{\e} \bar{D} \right)^{1-\d} \right]^N.
$$
Then the desired bound $p \le e^{-N}$ holds if $\l$ is chosen so that
$$
\l \le \Big( \frac{c}{\sqrt{\e} \bar{D}} \Big)^{\frac{1}{1-\e}}.
$$
In particular, this holds if $\l$ satisfies the same restriction as in the
genuinely complex case, namely \eqref{eq: lambda restriction 2}.
(To see this, note that $1+2\d \ge 1/(1-\e)$ by \eqref{eq: epsilon delta}.)

The rest of the proof is exactly as in the genuinely complex case.
Proposition~\ref{prop: kernels and level sets} is proved.
\qed

\subsection{Proof of Theorem~\ref{thm: kernel unstructured}}

For convenience, let us denote
$$
D_0 := \min(\sqrt{N} e^{c/\sqrt{\e}}, \; \e N).
$$
Assume that $D(\real{\ker B}, L) \le D_0$, and the event $\BB_{B,M}$ occurs. This means that there exists $z \in S^{N-1}$ such that
$$
z \in \ker B, \quad D(\real{z}) \le D_0.
$$

We can bound the probability of this event by considering the following cases.
If $z$ is compressible, then such event holds with probability at most $e^{-c_1 N}$ according
to Proposition~\ref{prop: invertibility on Comp}. Assume that $z$ is incompressible.
In the genuinely complex case where $d(\real{z}) > d_0$, the vector $z$ must belong to
a level set $\bar{S}_{D,d}$ for some $D \in [c_0 \sqrt{N}, D_0]$ and $d \in [d_0,1]$.
(The lower bound on $D$ here is from \eqref{eq: D lower simple}.)
For given $D$ and $d$, the probability that such $z$ exists is at most $e^{-c_2 N}$
by the first part of Proposition~\ref{prop: kernels and level sets}.
In the remaining, essentially real case where $d(\real{z}) \le d_0$, the vector $z$ must belong to
a level set $S_{D,d_0}$ for some $D \in [c_0 \sqrt{N}, D_0]$.
For given $D$ and $d$, the probability that such $z$ exists is at most $e^{-c_2 N}$
by to the second part of Proposition~\ref{prop: kernels and level sets}.

This reasoning shows that the probability that $D(\real{\ker B}, L) \le D_0$ is bounded by
\begin{equation}         \label{eq: prob double sum}
e^{-c_1 N}
+ \sum_{\substack{D \in [c_0 \sqrt{N}, D_0] \\ \text{dyadic}}} \; \sum_{\substack{d \in [d_0, 1] \\ \text{dyadic}}} e^{-c_2 N}
+ \sum_{\substack{D \in [c_0 \sqrt{N}, D_0] \\ \text{dyadic}}} e^{-c_3 N}.
\end{equation}
The definitions of the level sets allowed us here to discretize the ranges of $D$ and $d$
by including only the dyadic values in the sum, namely the values of the form $2^k$, $k \in \Z$.

It remains to bound the number the terms in the sums.
The number of dyadic values in the interval $[c_0 N, D_0]$ is at most
$$
\log \Big( \frac{D_0}{c_0 \sqrt{N}} \Big) \le \log N
$$
since $D_0 \le N$ by definition.
Similarly, using the definition~\ref{eq: gamma d0} of $d_0$, we see that the number of dyadic values
in the interval $[d_0,1]$ is at most
$$
\log \Big( \frac{1}{d_0} \Big)
\le \log \Big( \frac{D}{\e N} \Big)
\le \Big( \frac{D_0}{\e N} \Big)
\le \log N.
$$
Therefore, the probability estimate \eqref{eq: prob double sum} is bounded by
$$
e^{-c_1 N} + \log^2(N) e^{-c_2 N} + \log(N) e^{-c_3 N} \le e^{-cN}.
$$
This completes the proof.
\qed

\subsection{Distances between random vectors and subspaces}

We are ready to prove Theorem~\ref{thm: distance general} on the distances between random vectors and subspaces.
It can be quickly deduced by combining small ball probability bounds we developed in Section~\ref{s: LCD and SBP}
with the bound on LCD for random subspaces, namely Theorem~\ref{thm: kernel unstructured}.

Given a random vector $Y \in \R^k$ and an event $\Omega$, denote
\[
  \LL_{\Omega} (Y, r) = \sup_{y \in \R^k} \Pr{\|Y-y\|_2 <r \ \text{and} \ \Omega}.
\]
In Section~\ref{s: from C to R general}, we reduced Theorem~\ref{thm: distance general} to a problem over reals.
Indeed, according to \eqref{eq: reduction of dist to R}, it suffices to bound
$$
p_0 := \LL_{\BB_{B,M}}(P_{\real{\ker B}} \widehat{Z}, 2 \tau \sqrt{\e N}).
$$
For this, we recall that $\text{dim}(\real{\ker B}) =2 \e N$ and apply Corollary~\ref{cor: SBP proj}, which yields
\begin{equation}         \label{eq: LL via SBP}
p_0 \le \Big( \frac{CL}{\sqrt{\e N}} \Big)^{2 \e N}
  \Big( \tau + \frac{\sqrt{\e N}}{D(\real{\ker B},L)} \Big)^{2 \e N}.
\end{equation}
Next, Theorem~\ref{thm: kernel unstructured} states that for $L = \sqrt{\e N}$, with probability
at least $1-e^{-cN}$ we have
$$
D(\real{\ker B},L) \ge \min \left( \sqrt{N} e^{c/\sqrt{\e}}, \; \e N \right).
$$
Substituting this into \eqref{eq: LL via SBP} and simplifying the bound, we obtain
$$
p_0
\le \left[ C \Big( \tau + \frac{1}{\sqrt{\e N}}
+ \sqrt{\e} e^{-c/\sqrt{\e}}\Big) \right]^{2 \e N}
+ e^{-c  N}
\le \left[ C \Big( \tau + \frac{1}{\sqrt{\e N}} + e^{-c'/\sqrt{\e}} \Big) \right]^{2  \e N}.
$$
This inequality combined with \eqref{eq: reduction of dist to R} completes the proof of Theorem~\ref{thm: distance general}.
\qed

\section{Proof of Theorem~\ref{thm: invertibility general} on invertibility for general distributions}	\label{s: general proof}

The strategy of the proof of Theorem~\ref{thm: invertibility general} will be very close
to the argument we gave for continuous distributions in Section~\ref{s: continuous}.
However, there are two important differences.
First, the distance bound for continuous distributions
given in Lemma~\ref{lem: distance} can not hold for general distributions;
we will replace it by Theorem~\ref{thm: distance general}.
Another ingredient that is not available for general distributions is the lower bound
given in Lemma~\ref{lem: fixed row and vector} and all of its consequences in Section~\ref{s: G below on E-}.
Instead, we will use the small ball probability bounds for general distributions that we developed
in Section~\ref{s: special cases sums}. Let us start with this latter task.

\subsection{$G$ is bounded below on the small subspace $E^-$}

Here we extend the argument of Section~\ref{s: G below on E-} to general distributions.

\begin{lemma}			\label{lem: E- Incomp}
  With probability at least $1-e^{-cn}$,
  we have $S_{E^-} \subset \Incomp$.
\end{lemma}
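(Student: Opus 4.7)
The strategy is a short proof by contradiction that pits two conflicting bounds on $\|Bz\|_2$ against each other: an upper bound coming from the definition of $E^-$ (which will be constructed, as in Section~\ref{s: E+}, as the span of right singular vectors of $B$ with small singular values), and a uniform lower bound on compressible vectors supplied by Proposition~\ref{prop: invertibility on Comp}.

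First I would recall the construction of $E^-$ adapted to the general-distribution setting. Analogously to Section~\ref{s: E+}, one uses the distance bound from Theorem~\ref{thm: distance general} in the second negative moment identity to show that, except on an event of small probability, $B$ has at most $c\e n$ singular values below some threshold $\theta_0 \sqrt{n}$, and one then sets
\[
  E^- := \Span\{v_i(B): s_i(B) \le \theta_0 \sqrt{n}\}.
\]
The crucial feature is that $\theta_0 = \theta_0(K,p,M)$ can be taken to be an arbitrarily small constant (by tuning the free parameter in the construction, playing the role of $\tau\e$ in the continuous case). By definition, every $z \in S_{E^-}$ satisfies $\|Bz\|_2 \le \theta_0\sqrt{n}$.

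Second, by Proposition~\ref{prop: invertibility on Comp}, there is a constant $c_* = c_*(K,p,M) > 0$ such that, on an event $\EE$ of probability at least $1 - e^{-cn}$ intersected with $\BB_{B,M}$,
\[
  \inf_{z \in \Comp} \|Bz\|_2 \ge c_* \sqrt{n}.
\]
Choose $\theta_0 < c_*$ in the construction above. On $\EE$, any $z \in S_{E^-}$ obeys $\|Bz\|_2 \le \theta_0\sqrt{n} < c_* \sqrt{n}$, which forces $z \notin \Comp$, hence $z \in \Incomp$. Thus $S_{E^-} \subset \Incomp$ on $\EE$, and the desired probability bound follows.

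The only real ``step'' here is bookkeeping: one must make sure that the threshold $\theta_0$ defining $E^-$ in the general-distribution argument is chosen strictly below the incompressibility constant $c_*$ of Proposition~\ref{prop: invertibility on Comp}, before any later parts of the proof fix its value. Since $c_*$ depends only on $K$, $p$, $M$, this is a matter of shrinking the free parameter in the construction of $E^-$ once, and causes no conflict with the other uses of $E^-$ later in Section~\ref{s: general proof}. There is no genuine obstacle.
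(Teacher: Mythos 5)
Your argument is correct and mirrors the paper's proof exactly: both derive a contradiction by pitting the defining upper bound $\|Bz\|_2 \lesssim \theta_0\sqrt{n}$ for $z \in S_{E^-}$ against the lower bound $\|Bz\|_2 \gtrsim \sqrt{n}$ on $\Comp$ from Proposition~\ref{prop: invertibility on Comp}, concluding $S_{E^-} \cap \Comp = \emptyset$ with probability $1-e^{-cn}$. Your extra bookkeeping remark about making the threshold smaller than the incompressibility constant is a point the paper leaves implicit but is indeed the only thing one needs to check.
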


\begin{proof}
The definition of $E^-$ in Section~\ref{s: E+} implies that
$$
\|Bz\|_2 \le c \tau \e \sqrt{n} \quad \text{for all } z \in S_{E^-}.
$$
On the other hand, Proposition~\ref{prop: invertibility on Comp} states that
with probability at least  $1-e^{-cn}$,
$$
\|Bz\|_2 > c \sqrt{n} \quad \text{for all } z \in \Comp.
$$
Since these two bounds can not hold together for the same $z$, it follows
that the sets $S_{E^-}$ and $\Comp$ are disjoint. This proves the lemma.
\end{proof}

The following result is a version of Lemma~\ref{lem: fixed row and vector} for general distributions.

\begin{lemma}[Lower bound for a fixed row and vector]			\label{lem: fixed row and vector general}
  Let $G_j$ denote the $j$-th row of $G$.
  Then for each $j$, $z \in \Incomp$, and $\theta \ge 0$, we have
  $$
  \Pr{ |\ip{G_j}{z}| \le \theta} \le C \Big( \theta + \frac{1}{\sqrt{n}} \Big).
  $$
\end{lemma}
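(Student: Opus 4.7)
The plan is to imitate the proof of Lemma~\ref{lem: fixed row and vector}, but replace the continuous-density small-ball bound by Corollary~\ref{cor: SBP sums}, using incompressibility of $z$ to supply the required arithmetic unstructuredness of the coefficient vector. Write $G_j = X + iY$ where $X$ is a random vector with i.i.d.\ real coordinates satisfying \eqref{eq: xi} and $Y$ is fixed (by Assumption~\ref{A}), and write $z = x + iy$. Using the bilinear form as in Lemma~\ref{lem: fixed row and vector},
\[
\re \ip{G_j}{z} = \ip{X}{x} - \ip{Y}{y}, \qquad \im \ip{G_j}{z} = \ip{X}{y} + \ip{Y}{x}.
\]
Since $Y$ is deterministic, $\ip{Y}{y}$ and $\ip{Y}{x}$ are constant shifts that can be absorbed into the center of the concentration function. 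Hence $\Pr{|\ip{G_j}{z}|\le\theta}$ is bounded by $\LL(\ip{X}{w},\theta)$ for $w\in\{x,y\}$ to be chosen.

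The key geometric input is incompressibility. A standard consequence of $z\in\Incomp$ (see e.g.\ \cite{V symmetric}) yields a set $\sigma\subset[n]$ with $|\sigma|\ge c_5 n$ and $c_6/\sqrt{n}\le|z_k|\le c_7/\sqrt{n}$ for all $k\in\sigma$. Since $|z_k|^2 = x_k^2 + y_k^2$, at each $k\in\sigma$ at least one of $|x_k|,|y_k|$ is $\ge c_6/\sqrt{2n}$, so by pigeonhole either $\sigma_x := \{k\in\sigma:|x_k|\ge c_6/\sqrt{2n}\}$ or the analogous $\sigma_y$ has cardinality $\ge c_5 n/2$. Assume without loss of generality that the former holds (otherwise swap the roles of $x$ and $y$ and use $\im\ip{G_j}{z}$ instead), and set $S := \sigma_x$, $w := x$. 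Then
\[
\|x_S\|_\infty \le c_7/\sqrt{n}, \qquad \|x_S\|_2^2 \ge |S|\cdot c_6^2/(2n) \ge c_8.
\]

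To finish, apply the restriction property Lemma~\ref{lem: SBP restriction} to get
\[
\LL(\ip{X}{x},\theta)\le \LL(\ip{X_S}{x_S},\theta),
\]
and then apply Corollary~\ref{cor: SBP sums} with $L := \sqrt{8/p}$ (a constant) and the unit vector $x_S/\|x_S\|_2$:
\[
\LL(\ip{X_S}{x_S},\theta) \le \frac{CL}{\|x_S\|_2}\Bigl(\theta + \frac{1}{D(x_S,L)}\Bigr).
\]
The coefficient vector $x_S$ is arithmetically unstructured in the crudest sense: Proposition~\ref{prop: LCD lower simple} gives $D(x_S,L) \ge 1/(2\|x_S\|_\infty) \ge \sqrt{n}/(2c_7)$. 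Combining with $\|x_S\|_2\ge\sqrt{c_8}$ and the constant value of $L$ yields the desired bound $C(\theta + 1/\sqrt{n})$. There is no real obstacle; the only subtlety is the mismatch between the complex geometry of $z$ and the purely real randomness in $X$, which is resolved by the pigeonhole step that isolates the component ($x$ or $y$) carrying the spread on $\sigma$.
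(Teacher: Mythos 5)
Your proof is correct and follows essentially the same route as the paper: isolate a coordinate set where $z$ is spread, decide via pigeonhole whether to work with the real or imaginary part of $z$ (using $\re\ip{G_j}{z}$ or $\im\ip{G_j}{z}$ accordingly), bound the LCD of the resulting coefficient vector from below by $c\sqrt{n}$ via Proposition~\ref{prop: LCD lower simple}, and conclude with the restriction lemma and Corollary~\ref{cor: SBP sums}. The paper uses the set of all but the $cn$ largest coordinates with an $\ell_2$ energy split between $x_J$ and $y_J$, rather than your coordinate-wise pigeonhole on a two-sided spread set, but this is a cosmetic difference; your version makes the ``without loss of generality'' (which branch of the complex inner product to drop) slightly more explicit.
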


\begin{proof}
Fix $z = x + iy$ and let $J$ be the set of indices of all except $cn$ largest (in the absolute value) coordinates of $z$.
By Markov's inequality and Definition~\ref{def: Comp Incomp} of incompressible vectors,
we have
$$
\|z_J\|_\infty \le \frac{c}{\sqrt{n}}, \quad \|z_J\|_2 \ge c.
$$
Since $z_J = x_J + i y_J$, either the real part $x_J$ or the complex part $y_J$
has $\ell_2$-norm bounded below by $c/2$.
Let us assume without loss of generality that $x_J$ satisfies this, so
\begin{equation}							\label{eq: xJ Linfty L2}
\|x_J\|_\infty \le \frac{c}{\sqrt{n}}, \quad \|x_J\|_2 \ge c.
\end{equation}
The first inequality and Proposition~\ref{prop: LCD lower simple} imply that
$$
D_1(x_J) \ge c \sqrt{n}.
$$
Here we use notation $D_1(\cdot)$ for the LCD in dimension $m=1$ and with $L \sim 1$;
note that it is distinct from the LCD in dimension $m=2$ we studied in the major part of this paper.

We proceed similarly to the proof of Lemma~\ref{lem: fixed row and vector}.
Decomposing the random vector $Z := G_j$ as $Z = X + i Y$, we obtain the bound \eqref{eq: SBP Zz}:
$$
\Pr{ |\ip{Z}{z}| \le \theta} \le \LL( \ip{X}{x}, \theta ).
$$
Further, we use the restriction property of the concentration function (Lemma~\ref{lem: SBP restriction})
followed by the small ball probability bound (Corollary~\ref{cor: SBP sums}), and obtain
$$
\LL( \ip{X}{x}, \theta )
\le \LL( \ip{X_J}{x_J}, \theta )
\le C \Big( \theta + \frac{1}{\sqrt{n}} \Big).
$$
This completes the proof of the lemma.
\end{proof}

Using Tensorization Lemma~\ref{lem: tensorization} exactly as we did before in Section~\ref{s: lower on fixed vector},
we obtain the following version of Lemma~\ref{lem: fixed vector} for general distributions.

\begin{lemma}[Lower bound for a fixed vector]		\label{lem: fixed vector general}
  For each $x \in \Incomp$ and $\theta >0$, we have
  $$
  \Pr{ \|Gx\|_2 \le \theta \sqrt{\e n} } \le \Big( C\theta + \frac{C}{\sqrt{n}} \Big)^{\e n}.
  $$
\end{lemma}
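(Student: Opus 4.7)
The plan is to imitate the continuous-distribution argument in Section~\ref{s: lower on fixed vector}, now using the general-distribution row bound of Lemma~\ref{lem: fixed row and vector general} in place of Lemma~\ref{lem: fixed row and vector}. First, I would write
$$
\|Gx\|_2^2 = \sum_{j=1}^{\e n} |\ip{G_j}{x}|^2,
$$
where $G_j \in \C^n$ denotes the $j$-th row of $G$. Since $G$ is the lower block of $\bar A$ from the decomposition \eqref{eq: A decomposed}, all of its entries are independent by Assumption~\ref{A}, and in particular the random vectors $G_1,\dots,G_{\e n}$ are independent.

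Next, I would introduce the scalar random variables $Z_j := \ip{G_j}{x} \in \C$, which are then independent by the previous observation. The hypothesis $x \in \Incomp$ lets me apply Lemma~\ref{lem: fixed row and vector general} to each row, giving
$$
\LL(Z_j, \theta) = \sup_{u \in \C}\Pr{|Z_j - u| \le \theta} \le C\Big(\theta + \tfrac{1}{\sqrt{n}}\Big), \quad \theta \ge 0,
$$
uniformly in $j$. (Strictly speaking Lemma~\ref{lem: fixed row and vector general} is stated with $u=0$, but the proof given there bounds the concentration function: one obtains the same bound uniformly in $u$ by absorbing $u$ into the fixed translation coming from the imaginary part of $G_j$.)

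Finally, I would plug this into Tensorization Lemma~\ref{lem: tensorization} applied to $Z = (Z_1,\dots,Z_{\e n}) \in \C^{\e n}$, with $M = C$ and $t_0 = 1/\sqrt{n}$. Since $\|Gx\|_2 = \|Z\|_2$, the lemma yields
$$
\Pr{\|Gx\|_2 \le \theta \sqrt{\e n}} = \LL(Z, \theta\sqrt{\e n}) \le \Big[CM\Big(\theta + \tfrac{1}{\sqrt{n}}\Big)\Big]^{\e n} = \Big(C\theta + \tfrac{C}{\sqrt{n}}\Big)^{\e n},
$$
as claimed. There is really no obstacle here; the one thing to double-check is that the small ball bound for $Z_j$ is genuinely uniform in the shift $u$, which is the content (or trivial extension) of Lemma~\ref{lem: fixed row and vector general}. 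Everything else is a mechanical invocation of independence of the rows of $G$ and the tensorization estimate.
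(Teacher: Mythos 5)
Your proof is correct and follows exactly the paper's argument: decompose $\|Gx\|_2^2 = \sum_j |\ip{G_j}{x}|^2$ into independent terms, apply Lemma~\ref{lem: fixed row and vector general} to each, and invoke Tensorization Lemma~\ref{lem: tensorization} with $t_0 = 1/\sqrt{n}$. Your aside about uniformity in the shift $u$ is a fair point of care but not a real obstacle, since the proof of Lemma~\ref{lem: fixed row and vector general} already absorbs the fixed real translation $\ip{Y}{y}$ (and would equally absorb any additional shift) before passing to $\LL(\ip{X}{x},\theta)$; moreover, for the target bound $\Pr{\|Gx\|_2 \le \theta\sqrt{\e n}}$ one only needs the unshifted small-ball bound in the tensorization step, so the issue does not even arise.
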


In particular, if $\theta \ge 1/\sqrt{n}$, then the probability is further bounded by $(C\theta)^{\e n}$.
This is similar to the bound we had in Lemma~\ref{lem: fixed vector} for the continuous case.
Using this observation, we deduce the following version of Lemma~\ref{lem: G bounded below on E}
for general distributions, and with the same proof.

\begin{lemma}[Lower bound on a subspace]		\label{lem: G bounded below on E general}	
  Let $M \ge 1$ and $\mu \in (0,1)$.
  Let $E$ be a fixed subspace of $\C^n$ of dimension at most $\mu \e n$, and such that $S_{E} \subset \Incomp$.
  Then, for every $\theta \ge 1/\sqrt{n}$, we have
  $$
  \Pr{\inf_{x \in S_E} \|Gx\|_2 < \theta \sqrt{\e n} \text{ and } \BB_{G,M}}
  \le \left[ C (M/\sqrt{\e})^{2 \mu} \theta^{1- 2\mu} \right]^{\e n}.
  $$
\end{lemma}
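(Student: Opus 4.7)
The plan is to follow the same covering scheme as in the proof of Lemma~\ref{lem: G bounded below on E}, but to replace the ingredient Lemma~\ref{lem: fixed vector} with its general-distribution analogue Lemma~\ref{lem: fixed vector general}. The assumption $S_E \subset \Incomp$ is exactly what makes this substitution legal: Lemma~\ref{lem: fixed vector general} requires the vector to be incompressible, and any net we choose inside $S_E$ will automatically satisfy this.

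First I would construct a $\d$-net $\NN$ of $S_E$ of cardinality at most $(3/\d)^{2\mu \e n}$, using the fact that $\real{E} \subset \R^{2n}$ has dimension at most $2\mu \e n$. Then, on the event $\{\inf_{x \in S_E} \|Gx\|_2 < \theta \sqrt{\e n}\} \cap \BB_{G,M}$, pick any witness $x \in S_E$ and approximate it by $x_0 \in \NN$ with $\|x - x_0\|_2 \le \d$; the triangle inequality combined with $\|G\| \le M \sqrt{n}$ yields $\|G x_0\|_2 \le \theta \sqrt{\e n} + M \sqrt{n}\, \d$. Choosing $\d = \theta \sqrt{\e}/M$ absorbs the second term, giving $\|G x_0\|_2 \le 2\theta \sqrt{\e n}$.

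Next I would union-bound over $\NN$. Because each $x_0 \in \NN \subset S_E \subset \Incomp$, Lemma~\ref{lem: fixed vector general} applies with $\theta$ replaced by $2\theta$ and gives
$$
\Pr{ \|G x_0\|_2 \le 2\theta \sqrt{\e n} } \le \Big( 2C\theta + \frac{C}{\sqrt{n}} \Big)^{\e n}.
$$
Here the hypothesis $\theta \ge 1/\sqrt{n}$ lets us collapse the two terms into $(C'\theta)^{\e n}$, which is the analogue of the clean bound available in the continuous setting. Multiplying by $|\NN|$ and inserting the value of $\d$ gives
$$
\Big( \frac{3M}{\theta \sqrt{\e}} \Big)^{2\mu \e n} \cdot (C'\theta)^{\e n}
\le \bigl[ C (M/\sqrt{\e})^{2\mu}\, \theta^{1 - 2\mu} \bigr]^{\e n},
$$
which is exactly the claimed bound.

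There is no real obstacle here: the proof is the mechanical adaptation of Lemma~\ref{lem: G bounded below on E} to the general-distribution regime, the only subtlety being the need to verify the incompressibility hypothesis required by Lemma~\ref{lem: fixed vector general}, which is precisely why the hypothesis $S_E \subset \Incomp$ appears in the statement. The restriction $\theta \ge 1/\sqrt{n}$ is there solely to suppress the additive $1/\sqrt{n}$ term in Lemma~\ref{lem: fixed vector general}, and in the invertibility application this regime is the one we care about.
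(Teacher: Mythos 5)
Your proposal is correct and matches the paper's intended argument: the paper explicitly states that Lemma~\ref{lem: G bounded below on E general} follows ``with the same proof'' as Lemma~\ref{lem: G bounded below on E}, substituting Lemma~\ref{lem: fixed vector general} for Lemma~\ref{lem: fixed vector}. You correctly identified that $S_E \subset \Incomp$ is exactly what makes the net points eligible for the general-distribution fixed-vector bound, and that $\theta \ge 1/\sqrt{n}$ is what collapses the additive $1/\sqrt{n}$ term.
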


This lemma implies a lower bound on the smallest singular value of $G$ restricted to the set $S_{E^-}$ similar to Lemma \ref{lem: G bounded below on E}.
\begin{corollary}  \label{cor: G bounded below general}
  Let $M \ge 1$ and $\mu \in (0,1)$.
  Then, for every $\theta \ge 1/\sqrt{n}$, we have
  $$
  \Pr{\inf_{x \in S_{E^-}} \|Gx\|_2 < \theta \sqrt{\e n}  \text{ and } S_{E^-} \subset \Incomp  \text{ and } \DD_{E^-} \cap \BB_{G,M}}
  \le \left[ C (M/\sqrt{\e})^{2 \mu} \theta^{1- 2\mu} \right]^{\e n}.
  $$
\end{corollary}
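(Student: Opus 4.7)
The plan is a direct conditioning argument, exactly parallel to the deduction of Lemma~\ref{lem: G bounded below} from Lemma~\ref{lem: G bounded below on E} in the continuous case. All substantive work has been packaged into Lemma~\ref{lem: G bounded below on E general}.

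By Assumption~\ref{A}, the row-blocks $B$ and $G$ of $A_{I^c}$ are independent, and $G$ itself has all independent entries. The subspace $E^-$ depends only on $B$, so the events $\DD_{E^-}$ and $\{S_{E^-} \subset \Incomp\}$ are measurable with respect to $B$ alone; meanwhile $\BB_{G,M}$ and $\inf_{x \in S_{E^-}}\|Gx\|_2$ are functions of $G$ once $E^-$ has been fixed. I therefore condition on $B$. For any realization of $B$ on which both $\DD_{E^-}$ and $S_{E^-} \subset \Incomp$ hold, $E^-$ is now a deterministic subspace of $\C^n$ with $\dim(E^-) \le c\e n \le \mu\e n$ (for the relevant range $\mu \ge c$; for $\mu < c$ one reads the corollary with $c$ in place of $\mu$, which only strengthens the target) and $S_{E^-} \subset \Incomp$, so all hypotheses of Lemma~\ref{lem: G bounded below on E general} are met. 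Applying that lemma with $E = E^-$ and the independent random matrix $G$ yields the conditional estimate
$$
\Pr{\inf_{x \in S_{E^-}} \|Gx\|_2 < \theta\sqrt{\e n} \ \text{and}\ \BB_{G,M} \ | \ B}
\le \left[\, C(M/\sqrt{\e})^{2\mu}\, \theta^{1-2\mu}\,\right]^{\e n}.
$$
Integrating this bound over realizations of $B$ in the event $\DD_{E^-} \cap \{S_{E^-} \subset \Incomp\}$ produces precisely the probability in the statement of the corollary.

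The argument is routine; the heavy lifting is Lemma~\ref{lem: G bounded below on E general} itself. The one point worth flagging is why the incompressibility hypothesis $S_{E^-} \subset \Incomp$ must be carried along, whereas it was absent from the continuous analogue: the underlying small-ball bound Lemma~\ref{lem: fixed vector general} is sharp only for incompressible vectors, and that hypothesis must be propagated from Lemma~\ref{lem: fixed row and vector general} onward. That the hypothesis is typically satisfied --- and hence imposes no real loss when intersected into the probability --- is the content of Lemma~\ref{lem: E- Incomp}, which will be used in tandem with this corollary downstream.
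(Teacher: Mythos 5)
Your main argument is correct and is exactly the conditioning argument the paper intends (mirroring how Lemma~\ref{lem: G bounded below} follows from Lemma~\ref{lem: G bounded below on E} in the continuous case): $E^-$, $\DD_{E^-}$ and $\{S_{E^-}\subset\Incomp\}$ are $B$-measurable, $G$ is independent of $B$ with independent entries by Assumption~\ref{A}, so one conditions on a realization of $B$ in $\DD_{E^-}\cap\{S_{E^-}\subset\Incomp\}$, applies Lemma~\ref{lem: G bounded below on E general} to the now-fixed subspace $E^-$, and integrates.

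One parenthetical, however, is wrong. You claim that for $\mu<c$ the corollary can be read ``with $c$ in place of $\mu$, which only strengthens the target.'' It does the opposite. The bound $(M/\sqrt{\e})^{2\mu}\theta^{1-2\mu} = \theta\cdot\bigl(M/(\sqrt{\e}\theta)\bigr)^{2\mu}$ is \emph{increasing} in $\mu$ as long as $\theta < M/\sqrt{\e}$, which is the only regime in which the corollary is nonvacuous. So the bound proved with $c$ in place of a smaller $\mu$ is larger, and does not imply the claimed inequality for $\mu<c$; the corollary is in fact established only for $\mu\ge c$, where $c$ is the constant from the definition of $\DD_{E^-}$. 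This causes no harm downstream --- in the proof of Theorem~\ref{thm: invertibility general} the corollary is invoked with $\mu = 0.05 > c$ --- but the dismissal as stated is not a valid repair.
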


\subsection{Proof of Theorem~\ref{thm: invertibility general}}

The argument will follow the same lines as in Section~\ref{s: continuous} for continuous distributions;
here we will only indicate necessary modifications.

Without loss of generality, we may assume that
\begin{equation}  \label{eq: epsilon power of n}
 \e \ge n^{-0.4}.
\end{equation}
Indeed, \eqref{eq: t large} implies that $t^{0.4} \e^{-1.4} \ge \e^{-1} n^{-0.4}$, so the statement of Theorem \ref{thm: invertibility general} becomes vacuous whenever \eqref{eq: epsilon power of n} does not hold.

Since $\dist(B_j, H_j) \ge \dist(B'_j, H'_j)$ by \eqref{eq: BjHj},
Theorem \ref{thm: distance general} implies that
\begin{equation}         \label{eq: dist applied}
  \Pr{ \text{dist}(B_j, \text{Im}(H_j)) \le \tau \sqrt{\e n} \ \text{and} \ \BB_{A,M} }
  \le (C \tau)^{\e n}
\end{equation}
for any
\[
 \tau \ge \frac{c}{\sqrt{\e n}} + e^{-c/\sqrt{\e}}
 =: \tau_0.
\]
Consider the random variables
$$
Y_j:= \big[ \max \big( \text{dist}(B_j, H_J), \, \tau_0 \sqrt{\e n} \big) \big]^{-2} \cdot \one_{\BB_{A,M}}
$$
and argue as in Section \ref{s: dist into smi}.
We see that $Y_j$ belong to weak $L^p$ for $p = \e n/ 2$ and $\|Y_j\|_{p,\infty} \le C^2/\e n$.
By weak triangle inequality, this yields
$$
\Pr{\sum_{j=1}^n Y_j > \frac{C}{\tau^2 \e} } \le (C \tau)^{\e n}, \quad \tau > 0.
$$
Therefore,
\begin{align*}
\Pr{\sum_{j=1}^n \dist(B_j, H_j)^{-2} > \frac{1}{\tau^2 \e}\ \text{and} \ \BB_{A,M} }
&\le (C \tau)^{\e n} + \sum_{j=1}^n  \Pr{ \dist(B_j, H_j)^{-2} \neq Y_j\ \text{and} \ \BB_{A,M} } \\
&\le (C \tau)^{\e n} + \sum_{j=1}^n \Pr{ \dist(B_j, H_j) < \tau_0 \sqrt{\e n}\ \text{and} \ \BB_{A,M} }.
\end{align*}
Using again \eqref{eq: dist applied} and then \eqref{eq: epsilon power of n},
we see that this probability can be further bounded by
$$
(C \tau)^{\e n} + n (C \tau_0)^{\e n} \le (C_1 \tau)^{\e n} \quad \text{for } \tau \ge \tau_0.
$$
Defining the subspaces $E^+$, $E^-$ and the event $\DD_{E^-}$ as in Section \ref{s: E+}, we derive from this that
\begin{equation}  \label{eq: probability of D_E^- general}
 \Pr{ (\DD_{E^-})^c\ \text{and} \ \BB_{A,M} }
 \le (C_1 \tau)^{\e n} \quad \text{for } \tau \ge \tau_0.
\end{equation}

We finish the proof as in Section \ref{s: proof of invertibility continuous}. Set
\[
  \tau=\sqrt{t} \quad \text{and}  \quad
  \theta= \frac{C \sqrt{t}}{\e^{3/2}}.
\]
Then \eqref{eq: t large} ensures that $\tau \ge \tau_0$, so \eqref{eq: probability of D_E^- general} holds.
Furthermore, \eqref{eq: t large} and \eqref{eq: epsilon power of n} guarantee that $\theta \ge 1/\sqrt{n}$, hence Corollary \ref{cor: G bounded below general} applies.
Similarly to \eqref{eq: sA bar}, we use Corollary \ref{cor: G bounded below general}, Lemma \ref{lem: E- Incomp}, and \eqref{eq: probability of D_E^- general} to obtain
\begin{align*}
\Pr{s_{\bar{A}} < t \sqrt{n} \text{ and } \BB_{A,M}}
&\le \Pr{s_G < \frac{Ct}{ \tau \e} \cdot \sqrt{n} \text{ and } \BB_{A,M}} \\
&\le \Pr{s_G < \theta \cdot \sqrt{\e n} \text{ and } S_{E^-} \subset \Incomp  \text{ and } \DD_{E^-} \cap \BB_{A,M}} \\
&\quad  +\Pr{ S_{E^-} \not \subset \Incomp \text{ and } \BB_{A,M}}
     + \Pr{(\DD_{E^-})^c\ \text{and} \ \BB_{A,M} } \\
&\le \big( C  \e^{-0.05} \theta^{0.9} \big)^{\e n} + e^{-cn} + (C_1 \tau)^{\e n}  \\
&\le  \left[C  \e^{-1.4} t^{0.45} \right]^{\e n}
   + e^{-cn}  + (C t^{0.5})^{\e n}.
\end{align*}
It remains to check that the last two terms of the expression above can be absorbed into the first one. This is obvious for the third term, and follows from \eqref{eq: t large} for the second one as we assumed that $\e<c$.
This completes the proof of Theorem \ref{thm: invertibility general}. \qed


\begin{thebibliography}{99}

\bibitem{AGZ} G. Anderson, A. Guionnet, O. Zeitouni,
{\em An introduction to random matrices.}
Cambridge Studies in Advanced Mathematics, 118. Cambridge University Press, Cambridge, 2010.

\bibitem{AB} S. Arora, A. Bhaskara,
{\em Eigenvectors of random graphs: delocalization and nodal domains,}
\href{http://www.cs.princeton.edu/~bhaskara/files/deloc.pdf}{manuscript}, 2011.

\bibitem{BS} Z. Bai, J. Silverstein,
{\em Spectral analysis of large dimensional random matrices.}
Second edition. Springer Series in Statistics. Springer, New York, 2010.

\bibitem{BSY} Z. Bai, Z. D., J. Silverstein, Y. Yin,
{\em A note on the largest eigenvalue of a large dimensional sample covariance matrix,}
J. Multivariate Anal. 26 (1988), 166--168.

\bibitem{BP} F. Benaych-Georges, S. P\'ech\'e,
{\em Localization and delocalization for heavy tailed band matrices},
Annales Inst. H. Poincar\'e, to appear.

\bibitem{BG} C. Bordenave, A. Guionnet,
{\em Localization and delocalization of eigenvectors for heavy-tailed random matrices},
Probab. Theory Related Fields 157 (2013), 885--953.

\bibitem{CMS} C. Cacciapuoti, A. Maltsev, B. Schlein,
{\em Local Marchenko-Pastur law at the hard edge of sample covariance matrices},
Journal of Mathematical Physics, to appear.

\bibitem{DLL} Y. Dekel, J. R. Lee, N. Linial,
{\em Eigenvectors of random graphs: nodal domains,}
Random Structures Algorithms 39 (2011), 39--58.

\bibitem{ERS} R. Eldan, M. R\'acz, T. Schramm,
{\em Braess's paradox for the spectral gap in random graphs and delocalization of eigenvectors,}
submitted.
\href{http://arxiv.org/abs/1504.07669}{ArXiv: 1504.07669}

\bibitem{Erdos} L. Erd\"os,
{\em Universality for random matrices and log-gases},
Current developments in mathematics 2012, 59--132, Int. Press, Somerville, MA, 2013.

\bibitem{EK 1} L. Erd\"os, A. Knowles,
{\em Quantum diffusion and eigenfunction delocalization in a random band matrix model},
Commun. Math. Phys. 303 (2011), 509--554.

\bibitem{EK 2} L. Erd\"os, A. Knowles,
{\em Quantum diffusion and delocalization for band matrices with general distribution},
Annales Inst. H. Poincar\'e 12 (2011), 1227--1319.

\bibitem{EKYY I} L. Erd\"os, A. Knowles, H.-T. Yau, J. Yin,
{\em Spectral statistics of Erd\"os-R\'enyi graphs I: local semicircle law},
Annals of Probability,  41 (2013), no. 3B, 2279--2375.

\bibitem{EKYY II} L. Erd\"os, A. Knowles, H.-T. Yau, J. Yin,
{\em Spectral statistics of Erd\"os-R\'enyi graphs II: eigenvalue spacing and the extreme eigenvalues},
 Comm. Math. Phys. 314 (2012), no. 3, 587--640.

\bibitem{EKYY band} L. Erd\"os, A. Knowles, H.-T. Yau, J. Yin,
{\em Delocalization and diffusion profile for random band matrices},
Comm. Math. Phys., to appear.

\bibitem{ESY 1} L. Erd\"os, B. Schlein, H.-T. Yau,
{\em Semicircle law on short scales and delocalization of eigenvectors for Wigner random matrices},
Ann. Probab. 37 (2009), 815--852.

\bibitem{ESY 2} L. Erd\"os, B. Schlein, H.-T. Yau,
{\em Local semicircle law and complete delocalization for Wigner random matrices},
Comm. Math. Phys. 287 (2009), 641--655.

\bibitem{EY BullAMS} L. Erd\"os, H.-T. Yau,
{\em Universality of local spectral statistics of random matrices},
Bull. Amer. Math. Soc. (N.S.) 49 (2012), 377--414.

\bibitem{Latala} R. Latala,
{\em Some estimates of norms of random matrices,}
Proc. Amer. Math. Soc. 133 (2005), 1273--1282.

\bibitem{NTV} H. Nguyen, T. Tao, V. Vu,
{\em Random matrices: tail bounds for gaps between eigenvalues,}
submitted.
\href{http://arxiv.org/abs/1504.00396}{ArXiv: 1504.00396}

\bibitem{V square} M. Rudelson,
{\em Invertibility of random matrices: norm of the inverse,}
Annals of Mathematics 168 (2008), 575--600.

\bibitem{R survey} M. Rudelson,
{\em Recent developments in non-asymptotic theory of random matrices},
Modern aspects of random matrix theory, 83--120, Proc. Sympos. Appl. Math., 72,
Amer. Math. Soc., Providence, RI, 2014.

\bibitem{RV square} M. Rudelson, R. Vershynin,
{\em The Littlewood-Offord Problem and invertibility of random matrices,}
Advances in Mathematics 218 (2008), 600--633.

\bibitem{RV upper} M. Rudelson, R. Vershynin,
{\em The least singular value of a random square matrix is $O(n^{-1/2})$,}
Comptes rendus de l'Acad\'emie des sciences - Math\'ematique 346 (2008), 893--896.

\bibitem{RV rectangular} M. Rudelson, R. Vershynin,
{\em Smallest singular value of a random rectangular matrix,}
Communications on Pure and Applied Mathematics 62 (2009), 1707--1739.

\bibitem{RV ICM} M. Rudelson, R. Vershynin,
{\em Non-asymptotic theory of random matrices: extreme singular values,}
Proceedings of the International Congress of Mathematicians.
Volume III, 1576--1602, Hindustan Book Agency, New Delhi, 2010.

\bibitem{RV small ball} M. Rudelson, R. Vershynin,
{\em Small ball probabilities for linear images of high dimensional distributions,}
International Mathematics Research Notices, to appear.
\href{http://arxiv.org/abs/1402.4492}{ArXiv: 1402.4492}

\bibitem{RV delocalization} M. Rudelson, R. Vershynin,
{\em Delocalization of eigenvectors of random matrices with independent entries},
Duke Math. Journal, to appear.
\href{http://arxiv.org/abs/1306.2887}{ArXiv: 1306.2887}

\bibitem{Stein Weiss} E. M. Stein, G. Weiss,
{\em Introduction to Fourier analysis on Euclidean spaces.}
Princeton Mathematical Series, No. 32. Princeton University Press,
Princeton, N.J., 1971.

\bibitem{Tao}, T. Tao,
{\em Topics in random matrix theory.}
Graduate Studies in Mathematics, 132. American Mathematical Society, Providence, RI, 2012.

\bibitem{Tao-Vu STOC} T. Tao,V. Vu,
{\em The condition number of a randomly perturbed matrix.}
STOC'07--Proceedings of the 39th Annual ACM Symposium on Theory of Computing, 248--255, ACM, New York, 2007.

\bibitem{Tao-Vu Annals} T. Tao,V. Vu,
{\em Inverse Littlewood-Offord theorems and the condition number of random discrete matrices,}
Ann. of Math. (2) 169 (2009), 595--632.

\bibitem{Tao-Vu BullAMS} T. Tao, V. Vu,
{\em From the Littlewood-Offord problem to the circular law: universality of the spectral distribution of random matrices,}
Bull. Amer. Math. Soc. (N.S.) 46 (2009), 377--396.

\bibitem{Tao-Vu} T. Tao, V. Vu,
{\em Random matrices: universality of ESDs and the circular law,}
with an appendix by Manjunath Krishnapur. Ann. Probab. 38 (2010), 2023--2065.

\bibitem{Tao-Vu survey} T. Tao, V. Vu,
{\em Random matrices: the universality phenomenon for Wigner ensembles,}
Modern aspects of random matrix theory, 121--172,
Proc. Sympos. Appl. Math., 72, Amer. Math. Soc., Providence, RI, 2014.

\bibitem{TVW}
 L. V.  Tran, V. Vu, K.  Wang,
 {\em Sparse random graphs: eigenvalues and eigenvectors},
 Random Structures Algorithms 42 (2013), 110--134.

\bibitem{V RMT} R. Vershynin,
{\em Introduction to the non-asymptotic analysis of random matrices.}
Compressed sensing, 210--268, Cambridge Univ. Press, Cambridge, 2012.
\href{http://arxiv.org/abs/1011.3027}{ArXiv: 1011.3027}

\bibitem{V symmetric} R. Vershynin,
{\em Invertibility of symmetric random matrices,}
Random Structures and Algorithms 44 (2014), 135--182.
\href{http://arxiv.org/abs/1102.0300}{ArXiv: 1102.0300}

\bibitem{VW} V. Vu, K. Wang,
{\em Random weighted projections, random quadratic forms and random eigenvectors},
Random Structures and Algorithms, to appear.
\href{http://arxiv.org/abs/1306.3099}{ArXiv: 1306.3099}

\end{thebibliography}
\end{document}